\definecolor{0}{RGB}{250,0,0}
\definecolor{1}{RGB}{0,10,255}
\definecolor{2}{RGB}{220,190,0}
\definecolor{3}{RGB}{0,170,130}
\definecolor{midnightblue}{HTML}{191970}
\definecolor{darkgreen}{HTML}{006400}
\definecolor{gold}{HTML}{ffd700}
\definecolor{lime}{HTML}{00ff00}
\definecolor{aqua}{HTML}{00ffff}
\definecolor{fushia}{HTML}{ff00ff}
\definecolor{lightpink}{HTML}{ffb6c1}
\definecolor{orange}{HTML}{ffa500}
\definecolor{darkcyan}{HTML}{008b8b}
\definecolor{deeppink}{HTML}{ff1493}
\definecolor{yellowish}{HTML}{fcc400}
\definecolor{purplely}{HTML}{ac00e8}
\theoremstyle{plain}
\newtheorem{thm}{Theorem}
\newtheorem{prop}[thm]{Proposition}
\newtheorem{lem}[thm]{Lemma}
\newtheorem{cor}[thm]{Corollary}
\newtheorem{conj}{Conjecture}
\newtheorem{lemma}[thm]{Lemma}
\theoremstyle{definition}
\newtheorem{prob}{Problem}
\theoremstyle{remark}
\newcommand{\topo}{\mathrm{top}}
\newcommand{\odd}{\mathrm{odd}}
\newcommand{\join}{\texttt{join}}
\newcommand{\ins}{\texttt{insert}}
\newcommand{\meet}{\texttt{meet}}
\newcommand{\la}{\mathrm{la}}
\newcommand{\iso}{\mathrm{iso}}
\title{Path Odd-Covers of Graphs}
\author[]{Steffen~Borgwardt \and Calum~Buchanan \and Eric~Culver \and Bryce~Frederickson \and Puck~Rombach \and  Youngho~Yoo}
\begin{document}
\begin{abstract}
We introduce and study ``path odd-covers'', a weakening of Gallai’s path decomposition problem and a strengthening of the linear arboricity problem. The ``path odd-cover number'' $p_2(G)$ of a graph $G$ is the minimum cardinality of a collection of paths whose vertex sets are contained in $V(G)$ and whose symmetric difference of edge sets is $E(G)$.

We prove an upper bound on $p_2(G)$ in terms of the maximum degree $\Delta$ and the number of odd-degree vertices $v_{\mathrm{odd}}$ of the form $\max\left\{{v_{\mathrm{odd}}}/{2}, 2\left\lceil {\Delta}/{2}\right \rceil\right\}$. This bound is only a factor of $2$ from a rather immediate lower bound of the form $\max \left\{ {v_{\mathrm{odd}} }/{2} , \left\lceil {\Delta}/{2}\right\rceil \right\}$.
We also investigate some natural relaxations of the problem which highlight the connection between the path odd-cover number and other well-known graph parameters.
For example, when allowing for subdivisions of $G$, the previously mentioned lower bound is always tight except in some trivial cases. 
Further, a relaxation that allows for the addition of isolated vertices to $G$ leads to a match with the linear arboricity when $G$ is Eulerian.
Finally, we transfer our observations to establish analogous results for cycle odd-covers.
\end{abstract}

\keywords{path cover, odd-cover, linear arboricity, Gallai's conjecture} 
\subjclass{05C62 (Primary), 05C38, 05C70 (Secondary)}

\maketitle

\section{Introduction}\label{sec:intro}

Gallai conjectured that no more than $\lceil n/2 \rceil$ edge-disjoint paths are needed to decompose the edges of any connected graph on $n$ vertices. This would answer a question posed by Erdős. Both are posed in~\cite{lovasz1968covering}. The conjecture remains open in general, but it has been proven for many classes of graphs, such as graphs whose even-degree vertices induce a forest~\cite{pyber1985erdHos}, planar graphs~\cite{blanche2021gallai}, graphs with maximum degree at most $5$~\cite{bonamy2019gallai}, and graphs with maximum degree $6$ in which the vertices of maximum degree form an independent set (with some exceptions)~\cite{chu2021gallai}.
A number of related invariants have been introduced in the study of Gallai's conjecture.
Lov\'{a}sz showed that at most $\lfloor n/2 \rfloor$ edge-disjoint paths and cycles suffice to decompose the edges of any $n$-vertex graph~\cite{lovasz1968covering}, which also implies that $n$ edge-disjoint paths suffice.
This bound was subsequently improved to $\lfloor 2n/3\rfloor$ independently by Yan \cite{yan1998} and by Dean and Kouider \cite{dean2000}.
Fan resolved the ``covering'' version of the problem by showing that $\lceil n/2 \rceil$ (not necessarily edge-disjoint) paths suffice to cover the edges of any connected $n$-vertex graph~\cite{fan2002subgraph}, as conjectured by Chung \cite{chung1980coverings}.

Similar problems have also been considered for decomposing graphs into cycles. Erd\H{o}s and Gallai conjectured that the edges of any $n$-vertex graph can be decomposed into $O(n)$ cycles and edges (see \cite{erdHos1983some}). Equivalently, this says that the edges of any Eulerian graph can be decomposed into $O(n)$ cycles.  Haj\'os made the stronger conjecture that $\lfloor n/2 \rfloor$ cycles are always enough (see \cite{lovasz1968covering}), which would actually imply that $\lfloor (n-1)/2 \rfloor$ cycles suffice, as pointed out by Dean \cite{dean1986smallest}. By greedily removing large cycles from the graph, it is not hard to show that any Eulerian graph can be decomposed into $O(n \log n)$ cycles. The first improvement on this bound was by Conlon, Fox, Sudakov, who proved the bound of $O(n \log \log n)$ \cite{conlon2014cycle}. More recently, Buci\'c and Montgomery improved this to $O(n \log^\star n)$, where $\log^\star n$ is the iterated logarithm function \cite{bucic2022towards}. ``Covering'' versions of these problems have also been considered, and entirely resolved. Fan proved that the edges of any Eulerian graph $G$ on $n$ vertices can be covered by $\lfloor (n-1)/2 \rfloor$ cycles from $G$; in fact, this covering can be taken so that each edge is covered an odd number of times \cite{fan2003covers}. Similarly, Pyber proved that any $n$-vertex graph can be covered by covering by $n-1$ cycles and edges \cite{pyber1985erdHos}.

In this work, we introduce a variant of these path and cycle decomposition problems. 
A \emph{path odd-cover} of a graph $G = (V,E)$ is a collection of paths in the complete graph on $V$ ({\em i.e.}, paths whose vertex sets are contained in $V$) whose symmetric difference of edge sets is $E$; that is, every edge of $G$ appears in an odd number and every nonedge appears in an even number of the paths in the collection.
The \emph{path odd-cover number} $p_2(G)$ of $G$ is the minimum cardinality of a path odd-cover of $G$.
If $G$ is Eulerian, we similarly define a \emph{cycle odd-cover} of $G$ to be a collection of cycles whose symmetric difference of edge sets is $E$. We call the minimum cardinality of such a collection the \emph{cycle odd-cover number} of $G$, which we denote by $c_2(G)$.
The subscript ``$2$" indicates the relation to linear algebra over $\mathbb{F}_2$; a path (or a cycle) odd-cover can be equivalently defined as a collection of graphs on $V$ consisting of a single path (or cycle), and possibly some isolated vertices, whose adjacency matrices sum to the adjacency matrix of $G$ over $\mathbb{F}_2$.

The path odd-cover number is closely related to both the {\em path decomposition number} $p(G)$ and the {\em linear arboricity} $\la(G)$ of a graph $G$, which are the minimum cardinalities of a collection of edge-disjoint paths or linear forests which decompose $E(G)$, respectively. Since a linear forest (and hence a path) has maximum degree $2$, it is immediate that for a graph $G$ with maximum degree $\Delta$, $p(G)$ and $\la(G)$ are each bounded below by $\Delta / 2$.
Another long-standing conjecture, posed in~\cite{akiyama1980}, is that the latter bound is nearly tight, i.e., that $\la(G) \leq \left\lceil \frac{\Delta + 1}{2} \right\rceil$.
Alon~\cite{alon1988} proved that this upper bound holds asymptotically as $\Delta \to \infty$, and the best current such bound is $\Delta / 2 + O(\Delta^{2/3 - \alpha})$ for some absolute constant $\alpha > 0$~\cite{ferber2020towards}.

The notion of an odd-cover has appeared in the past in different contexts. It can be traced back to a problem of Babai and Frankl from 1988 \cite{babai1988linear} which asks for the minimum number of complete bipartite subgraphs of $K_n$ whose symmetric difference is $K_n$. This question was motivated as the binary field adaptation of the celebrated linear algebraic proof of Graham and Pollak \cite{graham1971,graham1972} that one needs at least $n-1$ edge-disjoint complete bipartite graphs to decompose $K_n$. Babai and Frankl's question was generalized in~\cite{buchanan2022odd} to finding the minimum cardinality of a biclique odd-cover of an arbitrary graph $G$.
Relatedly, one can replace complete bipartite graphs by complete graphs to obtain the minimum cardinality of a clique odd-cover of $G$. Clique odd-covers are studied under the name {\em subgraph complementation systems} in~\cite{buchanan2022subgraph}; these were motivated by a question posed by Vatter~\cite{317716} on representing a graph $G$ as a sum of cliques modulo 2, and they are equivalent to finding {\em orthogonal representations} of $G$ over $\mathbb{F}_2$~\cite{lovasz1979shannon}. 

The idea of using nonedges of $G$ also arises naturally in the study of the combinatorial diameter of Birkhoff polytopes \cite{br-74}, which represent one-to-one assignments, and the more general partition polytopes \cite{b-13,bv-19a}. The difference of two partitions of the same data set can be represented as a so-called {\em difference graph}, where the vertices correspond to the partition parts and each edge corresponds to a data item that has to be moved from one part to another. A walk in the skeleton of a partition polytope can be represented as a sequence of paths and cycles to ``delete'' such a graph. To minimize the length of such a sequence, one allows for the use of nonedges, {\em i.e.}, for a temporary misplacement of items.

\subsection{Contributions}

In this paper we devise upper and lower bounds on $p_2(G)$ based on other graph parameters of $G$. Chief among these parameters are the maximum degree of $G$, $\Delta(G)$, and the number of odd-degree vertices in $G$, $v_\odd(G)$. We also relate $p_2(G)$ to other parameters, such as the linear arboricity, $\la(G)$, and the path decomposition number, $p(G)$. Further, we define two variants of the path odd-cover number, $p_{2,\topo}(G)$ and $p_{2,\iso}(G)$, which give lower bounds on $p_2(G)$ and which have interesting properties of their own. The relationship between these parameters is summarized in Figure \ref{fig:bound diagram}.

\begin{figure}[ht]
  \centering
  \tikzstyle{every node}=[circle, draw, fill=black, inner sep=0pt, minimum width=4pt]
  \tikzstyle{every path}=[thick]
  \tikzstyle{label}=[draw=none, fill=none]
  \begin{tikzpicture}
    \node(p) at (2,4) {};
    \node(p2) at (1,3) {};
    \node(pc) at (3,2) {};
    \node(p2iso) at (1,2) {};
    \node(p2top) at (1,1) {};
    \node(la) at (3,1) {};
    \node(a) at (4,0) {};
    \node(max) at (2,0) {};
    \node(odd) at (0,0) {};
    
    \draw (p)--(p2)--(p2iso)--(p2top);
    \draw (p)--(pc)--(la)--(a);
    \draw (p2iso)--(la);
    \draw (odd) -- (p2top) -- (max) -- (la) -- (a);
    
    \node[label](hoffset) at (0.6,0) {};
    \node[label](voffset) at (0,-0.5) {};
    \node[label](l.p) at ($(p) - (hoffset) + (0,0.1)$) {$p(G)$};
    \node[label](l.p2) at ($(p2) - (hoffset)$) {$p_2(G)$};
    \node[label](l.pc) at ($(pc) + 1*(hoffset)$) {$p_c(G)$};
    \node[label](l.p2iso) at ($(p2iso) - 1.5*(hoffset)$) {$p_{2,\iso}(G)$};
    \node[label](l.p2top) at ($(p2top) - 1.5*(hoffset)$) {$p_{2,\topo}(G)$};
    \node[label](l.odd) at ($(odd) + (voffset)$) {$\frac{v_\odd(G)}{2}$};
    \node[label](l.max) at ($(max) + (voffset)$) {$\Big\lceil \frac{\Delta(G)}{2} \Big\rceil$};
    \node[label](l.la) at ($(la) + (hoffset)$) {$\la(G)$};
    \node[label](l.a) at ($(a) + (voffset)$) {$a(G)$};
  \end{tikzpicture}
  \caption{A Hasse diagram of the various parameters discussed in this paper. For each edge, the parameter at the lower vertex is a lower bound for the parameter at the upper vertex.
  \label{fig:bound diagram}}
\end{figure}
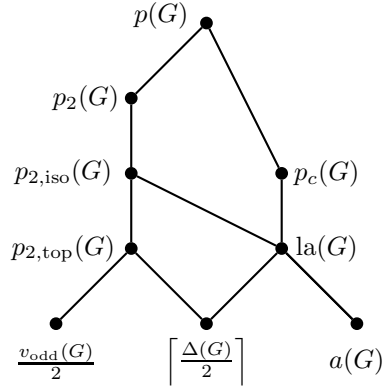  

Some elements of this figure are immediate. For example, every path decomposition of $G$ is a path odd-cover, and thus $p_2(G) \leq p(G)$.
Also, if we have a path odd-cover of $G$ with $k$ paths, then {\em (i)}\;each path contains at most two odd-degree vertices, so that $v_\odd(G) \leq 2k$, and {\em (ii)}\;each path has maximum degree $2$, so that $\Delta(G) \leq 2k$.
In other words, we have
\begin{equation}\label{eq:vodd,maxdegree_lower}
p_2(G) \geq \max{ \left\{ \frac{v_\odd (G)}{2} , \left\lceil\frac{\Delta(G)}{2}\right\rceil \right\} }.
\end{equation}

Also included in Figure~\ref{fig:bound diagram} is the {\em path covering number} of $G$, $p_c(G)$, which is the minimum number of paths in $G$ needed to cover every edge at least once~\cite{chung1980coverings}.
While $p_c(G)$ might appear to be related to $p_2(G)$, and while both are bounded below by $\la(G)$ and above by $p(G)$, these parameters can be arbitrarily far apart in either direction. Figures~\ref{fig:cyclecover} and \ref{fig:pclessp2} depict examples.

\begin{figure}[ht]
\centering        
\begin{tikzpicture}[vertices/.style={draw, fill=black, circle, inner sep=0pt, minimum size = 4pt, outer sep=0pt}, r_edge/.style={draw=red,line width= 1,>=latex,red}, b_edge/.style={draw=blue,line width= 1,>=latex,blue}, k_edge/.style={draw=black,line width= 1,>=latex,black}, scale=1]
\path[use as bounding box] (-0.5,-1) rectangle (9,2.5);
\node[vertices] (w_1) at (-.25,1) {};
\node[vertices] (w_2) at (.5,1.75) {};
\node[vertices] (w_3) at (1.25, 1) {};
\node[vertices] (w_4) at (.5, 0.25) {};

\node[vertices] (c_1) at (2.0,1.1) {};
\node[vertices] (c_2) at (3.0, 2) {};
\node[vertices] (c_3) at (4.0, 1.1) {};
\node[vertices] (c_4) at (3.6, 0) {};
\node[vertices] (c_5) at (2.4, 0) {};

\node[vertices] (d_1) at (5,1.1) {};
\node[vertices] (d_2) at (6, 2) {};
\node[vertices] (d_3) at (7, 1.1) {};
\node[vertices] (d_4) at (6.6, 0) {};
\node[vertices] (d_5) at (5.4, 0) {};

\node[vertices] (z_1) at (8,0.4) {};
\node[vertices] (z_2) at (8,1.6) {};
\node[vertices] (z_3) at (9, 1){};

\foreach \to/\from in {w_1/w_2, w_2/w_3, w_3/w_4,
    c_1/c_2, c_2/c_3, c_3/c_4, c_5/c_1,
    d_1/d_2, d_2/d_3, d_3/d_4, d_5/d_1,
    z_1/z_2, z_2/z_3}
\draw[k_edge]  (\to)--(\from);
\foreach \to/\from in {w_4/w_1,
    c_4/c_5,
    d_4/d_5,
    z_3/z_1}
\draw[k_edge]  (\to)--(\from);

\node at (-1.5,1) {$G$};
\end{tikzpicture}

\begin{tikzpicture}[vertices/.style={draw, fill=black, circle, inner sep=0pt, minimum size = 4pt, outer sep=0pt}, r_edge/.style={draw=red,line width= 1,>=latex,red}, b_edge/.style={draw=blue,line width= 1,>=latex,blue}, k_edge/.style={draw=black,line width= 1,>=latex,black}, scale=1]
\path[use as bounding box] (-0.5,-1) rectangle (9,2.5);
\node[vertices] (w_1) at (-.25,1) {};
\node[vertices] (w_2) at (.5,1.75) {};
\node[vertices] (w_3) at (1.25, 1) {};
\node[vertices] (w_4) at (.5, 0.25) {};

\node[vertices] (c_1) at (2.0,1.1) {};
\node[vertices] (c_2) at (3.0, 2) {};
\node[vertices] (c_3) at (4.0, 1.1) {};
\node[vertices] (c_4) at (3.6, 0) {};
\node[vertices] (c_5) at (2.4, 0) {};

\node[vertices] (d_1) at (5,1.1) {};
\node[vertices] (d_2) at (6, 2) {};
\node[vertices] (d_3) at (7, 1.1) {};
\node[vertices] (d_4) at (6.6, 0) {};
\node[vertices] (d_5) at (5.4, 0) {};

\node[vertices] (z_1) at (8,0.4) {};
\node[vertices] (z_2) at (8,1.6) {};
\node[vertices] (z_3) at (9, 1){};

\foreach \to/\from in {w_1/w_2, w_2/w_3, w_3/w_4,
    c_1/c_2, c_2/c_3, c_3/c_4, c_5/c_1,
    d_1/d_2, d_2/d_3, d_3/d_4, d_5/d_1,
    z_1/z_2, z_2/z_3}
\draw[b_edge]  (\to)--(\from);
\foreach \to/\from in {w_4/w_1,
    c_4/c_5,
    d_4/d_5,
    z_3/z_1}
\draw[r_edge]  (\to)--(\from);

\path [b_edge] (c_5) edge[bend right=30, looseness=0] (w_4);
\path [b_edge] (d_5) edge[bend right=30, looseness=0] (c_4);
\path [b_edge] (z_1) edge[bend right=30, looseness=0] (d_4);
\path [r_edge] (c_5) edge[bend left=30, looseness=0] (w_4);
\path [r_edge] (d_5) edge[bend left=30, looseness=0] (c_4);
\path [r_edge] (z_1) edge[bend left=30, looseness=0] (d_4);

\node at (-1.5,1) {$p_2(G)$};

\end{tikzpicture}

\begin{tikzpicture}[vertices/.style={draw, fill=black, circle, inner sep=0pt, minimum size = 4pt, outer sep=0pt}, r_edge/.style={draw=red,line width= 1,>=latex,red}, b_edge/.style={draw=blue,line width= 1,>=latex,blue}, k_edge/.style={draw=black,line width= 1,>=latex,black}, m_edge/.style={draw=midnightblue,line width= 1,>=latex,midnightblue}, d_edge/.style={draw=darkgreen,line width= 1,>=latex,darkgreen}, a_edge/.style={draw=aqua,line width= 1,>=latex,aqua}, l_edge/.style={draw=orange,line width= 1,>=latex,orange}, f_edge/.style={draw=fushia,line width= 1,>=latex,fushia}, i_edge/.style={draw=lime,line width= 1,>=latex,lime}, scale=1]
\path[use as bounding box] (-0.5,-1) rectangle (9,2.5);
\node[vertices] (w_1) at (-.25,1) {};
\node[vertices] (w_2) at (.5,1.75) {};
\node[vertices] (w_3) at (1.25, 1) {};
\node[vertices] (w_4) at (.5, 0.25) {};

\node[vertices] (c_1) at (2.0,1.1) {};
\node[vertices] (c_2) at (3.0, 2) {};
\node[vertices] (c_3) at (4.0, 1.1) {};
\node[vertices] (c_4) at (3.6, 0) {};
\node[vertices] (c_5) at (2.4, 0) {};

\node[vertices] (d_1) at (5,1.1) {};
\node[vertices] (d_2) at (6, 2) {};
\node[vertices] (d_3) at (7, 1.1) {};
\node[vertices] (d_4) at (6.6, 0) {};
\node[vertices] (d_5) at (5.4, 0) {};

\node[vertices] (z_1) at (8,0.4) {};
\node[vertices] (z_2) at (8,1.6) {};
\node[vertices] (z_3) at (9, 1){};

\foreach \to/\from in {w_1/w_2, w_2/w_3, w_3/w_4}
\draw[b_edge]  (\to)--(\from);
\foreach \to/\from in {c_1/c_2, c_2/c_3, c_3/c_4, c_5/c_1}
\draw[i_edge]  (\to)--(\from);
\foreach \to/\from in {d_1/d_2, d_2/d_3, d_3/d_4, d_5/d_1}
\draw[l_edge]  (\to)--(\from);
\foreach \to/\from in {z_1/z_2, z_2/z_3}
\draw[f_edge]  (\to)--(\from);

\draw[r_edge] (w_4) -- (w_1);
\draw[d_edge] (c_4) -- (c_5);
\draw[a_edge] (d_4) -- (d_5);
\draw[m_edge] (z_3) -- (z_1);

\node at (-1.5,1) {$p_c(G)$};
\end{tikzpicture}
\caption{A set of $k$ vertex-disjoint cycles has $p_2(G) = 2 \ll p_c(G) = p(G) = 2k$.}
\label{fig:cyclecover}
\end{figure}
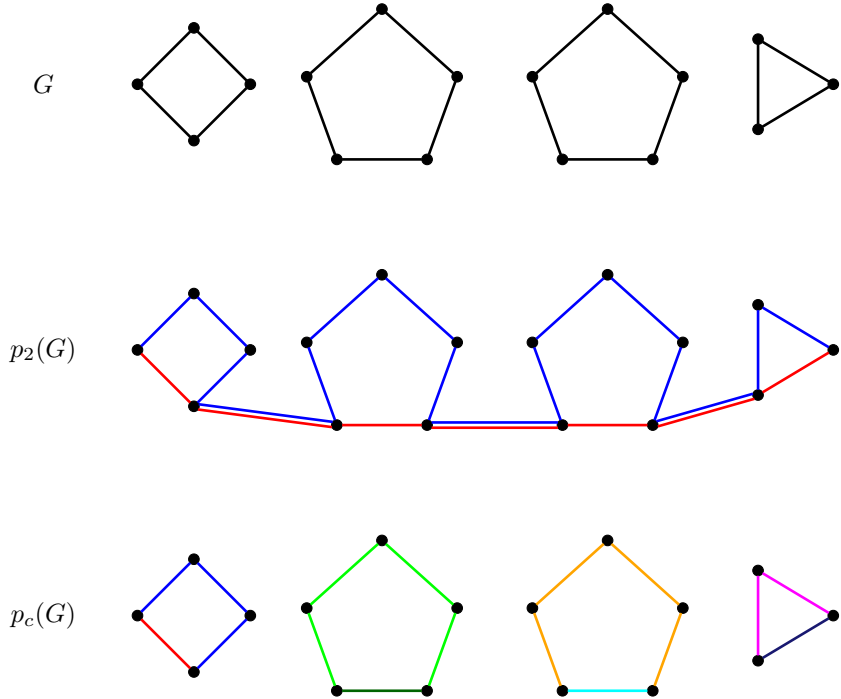

\begin{figure}[ht]
         \centering        
\begin{tikzpicture}[vertices/.style={draw, fill=black, circle, inner sep=0pt, minimum size = 4pt, outer sep=0pt}, r_edge/.style={draw=red,line width= 1,>=latex,red}, b_edge/.style={draw=blue,line width= 1,>=latex,blue}, k_edge/.style={draw=black,line width= 1,>=latex,black}, scale=1]
\path[use as bounding box] (-1.5,-1) rectangle (10,2.5);
\node[vertices] (w_1) at (-.25,1) {};
\node[vertices] (w_2) at (.5,1.75) {};
\node[vertices] (w_3) at (1.25, 1) {};
\node[vertices] (w_4) at (.5, 0.25) {};

\node[vertices] (c_1) at (2.0,1.1) {};
\node[vertices] (c_2) at (3.0, 2) {};
\node[vertices] (c_3) at (4.0, 1.1) {};
\node[vertices] (c_4) at (3.6, 0) {};
\node[vertices] (c_5) at (2.4, 0) {};

\node[vertices] (d_1) at (5,1.1) {};
\node[vertices] (d_2) at (6, 2) {};
\node[vertices] (d_3) at (7, 1.1) {};
\node[vertices] (d_4) at (6.6, 0) {};
\node[vertices] (d_5) at (5.4, 0) {};

\node[vertices] (z_1) at (8,0.4) {};
\node[vertices] (z_2) at (8,1.6) {};
\node[vertices] (z_3) at (9, 1){};

\node[vertices] (f_1) at (-1, 1.8){};
\node[vertices] (f_2) at (9.75, 1.8){};

\foreach \to/\from in {w_1/w_2, w_2/w_3, w_3/w_4,
    c_1/c_2, c_2/c_3, c_3/c_4, c_5/c_1,
    d_1/d_2, d_2/d_3, d_3/d_4, d_5/d_1,
    z_1/z_2, z_2/z_3}
\draw[k_edge]  (\to)--(\from);
\foreach \to/\from in {w_4/w_1,
    c_4/c_5,
    d_4/d_5,
    z_3/z_1}
\draw[k_edge]  (\to)--(\from);

\draw[k_edge] (w_4) -- (c_5);
\draw[k_edge] (c_4) -- (d_5);
\draw[k_edge] (d_4) -- (z_1);

\draw[k_edge] (f_1) -- (w_1);
\draw[k_edge] (f_2) -- (z_3);

\node at (-1.5,1) {$G$};
\end{tikzpicture}

\begin{tikzpicture}[vertices/.style={draw, fill=black, circle, inner sep=0pt, minimum size = 4pt, outer sep=0pt}, r_edge/.style={draw=red,line width= 1,>=latex,red}, b_edge/.style={draw=blue,line width= 1,>=latex,blue}, k_edge/.style={draw=black,line width= 1,>=latex,black}, m_edge/.style={draw=aqua,line width= 1,>=latex,aqua}, d_edge/.style={draw=midnightblue,line width= 1,>=latex,midnightblue}, a_edge/.style={draw=darkgreen,line width= 1,>=latex,darkgreen}, l_edge/.style={draw=orange,line width= 1,>=latex,orange}, f_edge/.style={draw=fushia,line width= 1,>=latex,fushia}, i_edge/.style={draw=lime,line width= 1,>=latex,lime}, scale=1]
\path[use as bounding box] (-1.5,-1) rectangle (10,2.5);
\node[vertices] (w_1) at (-.25,1) {};
\node[vertices] (w_2) at (.5,1.75) {};
\node[vertices] (w_3) at (1.25, 1) {};
\node[vertices] (w_4) at (.5, 0.25) {};

\node[vertices] (c_1) at (2.0,1.1) {};
\node[vertices] (c_2) at (3.0, 2) {};
\node[vertices] (c_3) at (4.0, 1.1) {};
\node[vertices] (c_4) at (3.6, 0) {};
\node[vertices] (c_5) at (2.4, 0) {};

\node[vertices] (d_1) at (5,1.1) {};
\node[vertices] (d_2) at (6, 2) {};
\node[vertices] (d_3) at (7, 1.1) {};
\node[vertices] (d_4) at (6.6, 0) {};
\node[vertices] (d_5) at (5.4, 0) {};

\node[vertices] (z_1) at (8,0.4) {};
\node[vertices] (z_2) at (8,1.6) {};
\node[vertices] (z_3) at (9, 1){};

\foreach \to/\from in {w_1/w_2, w_2/w_3, w_3/w_4}
\draw[b_edge]  (\to)--(\from);
\foreach \to/\from in {c_1/c_2, c_2/c_3, c_3/c_4, c_5/c_1}
\draw[m_edge]  (\to)--(\from);
\foreach \to/\from in {d_1/d_2, d_2/d_3, d_3/d_4, d_5/d_1}
\draw[a_edge]  (\to)--(\from);
\foreach \to/\from in {z_1/z_2, z_2/z_3}
\draw[l_edge]  (\to)--(\from);

\foreach \to/\from in {w_4/w_1,
    w_4/c_5,
    c_4/c_5,
    c_4/d_5,
    d_4/d_5,
    d_4/z_1,
    z_3/z_1}
\draw[r_edge]  (\to)--(\from);

\node[vertices] (f_1) at (-1, 1.8){};
\node[vertices] (f_2) at (9.75, 1.8){};

\draw[r_edge] (f_1) -- (w_1);
\draw[r_edge] (f_2) -- (z_3);

\node at (-1.5,1) {$p_2(G)$};
\end{tikzpicture}

\begin{tikzpicture}[vertices/.style={draw, fill=black, circle, inner sep=0pt, minimum size = 4pt, outer sep=0pt}, r_edge/.style={draw=red,line width= 1,>=latex,red}, b_edge/.style={draw=blue,line width= 1,>=latex,blue}, k_edge/.style={draw=black,line width= 1,>=latex,black}, scale=1]
\path[use as bounding box] (-1.5,-1) rectangle (10,2.5);
\node[vertices] (w_1) at (-.25,1) {};
\node[vertices] (w_2) at (.5,1.75) {};
\node[vertices] (w_3) at (1.25, 1) {};
\node[vertices] (w_4) at (.5, 0.25) {};

\node[vertices] (c_1) at (2.0,1.1) {};
\node[vertices] (c_2) at (3.0, 2) {};
\node[vertices] (c_3) at (4.0, 1.1) {};
\node[vertices] (c_4) at (3.6, 0) {};
\node[vertices] (c_5) at (2.4, 0) {};

\node[vertices] (d_1) at (5,1.1) {};
\node[vertices] (d_2) at (6, 2) {};
\node[vertices] (d_3) at (7, 1.1) {};
\node[vertices] (d_4) at (6.6, 0) {};
\node[vertices] (d_5) at (5.4, 0) {};

\node[vertices] (z_1) at (8,0.4) {};
\node[vertices] (z_2) at (8,1.6) {};
\node[vertices] (z_3) at (9, 1){};

\foreach \to/\from in {w_1/w_2, w_2/w_3, w_3/w_4,
    c_1/c_2, c_2/c_3, c_3/c_4, c_5/c_1,
    d_1/d_2, d_2/d_3, d_3/d_4, d_5/d_1,
    z_1/z_2, z_2/z_3}
\draw[b_edge]  (\to)--(\from);
\foreach \to/\from in {w_4/w_1,
    c_4/c_5,
    d_4/d_5,
    z_3/z_1}
\draw[r_edge]  (\to)--(\from);

\path [b_edge] (c_5) edge[bend right=30, looseness=0] (w_4);
\path [b_edge] (d_5) edge[bend right=30, looseness=0] (c_4);
\path [b_edge] (z_1) edge[bend right=30, looseness=0] (d_4);
\path [r_edge] (c_5) edge[bend left=30, looseness=0] (w_4);
\path [r_edge] (d_5) edge[bend left=30, looseness=0] (c_4);
\path [r_edge] (z_1) edge[bend left=30, looseness=0] (d_4);

\node[vertices] (f_1) at (-1, 1.8){};
\node[vertices] (f_2) at (9.75, 1.8){};

\draw[r_edge] (f_1) -- (w_1);
\draw[r_edge] (f_2) -- (z_3);

\node at (-1.5,1) {$p_c(G)$};

\end{tikzpicture}
         \caption{A set of $k$ vertex-disjoint cycles attached to a path at consecutive internal vertices, as above, has $p_2(G) = k+1 \gg p_c(G) = 2$.}
         \label{fig:pclessp2}
\end{figure}
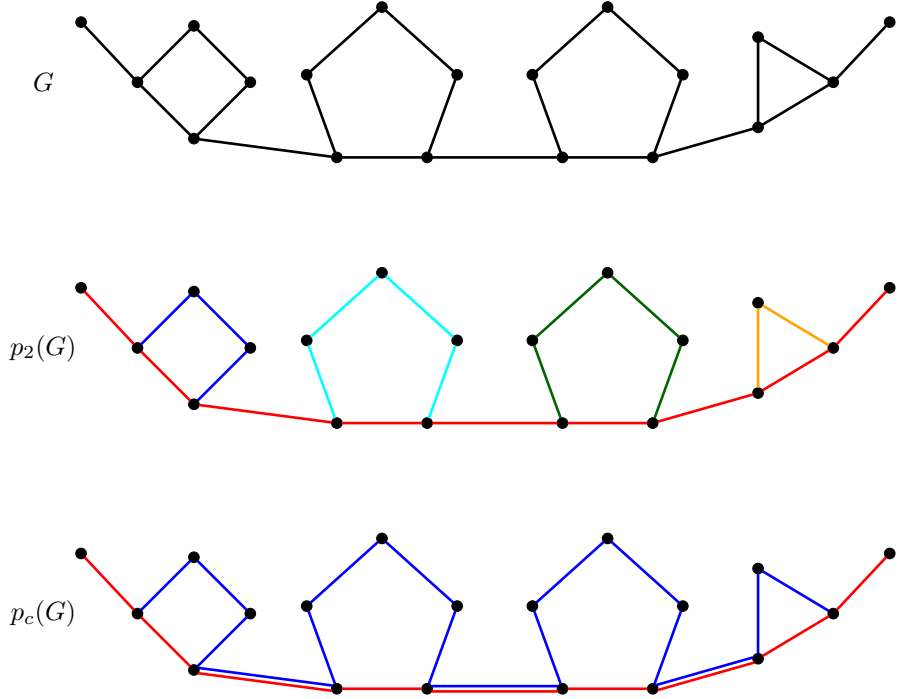

In Section~\ref{subsec:lowertopo}, we introduce $p_{2,\topo}(G)$, which is the minimum value of $p_2(H)$ over all subdivisions $H$ of $G$. Informally, this parameter considers the topological question of what kinds of ``shapes'' can be expressed as the symmetric difference of $k$ paths in space. Initially, our hope in investigating $p_{2,\topo}(G)$ was to see what other kinds of topological obstructions there are to path odd-covering besides vertex degrees. However, we discovered that there are none, except the trivial case where $G$ is the disjoint union of cycles and one path.
\begin{restatable}{thm}{Topological}\label{thm:topological}
If $G$ is not the disjoint union of at least one cycle with at most one path, then
\[p_{2,\topo}(G) = \max\left\{\frac{v_{\odd}(G)}{2}, \left\lceil \frac{\Delta(G)}{2}\right \rceil\right\}.\]
\end{restatable}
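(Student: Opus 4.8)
The plan is to prove the two inequalities separately: the lower bound is essentially free, and all of the work lies in the upper-bound construction.

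For the lower bound, I would first note that subdivision leaves the two governing parameters under control. Subdividing an edge inserts a vertex of degree $2$ and changes no existing degree, so every subdivision $H$ of $G$ satisfies $v_\odd(H) = v_\odd(G)$ and $\big\lceil \Delta(H)/2 \big\rceil \ge \big\lceil \Delta(G)/2 \big\rceil$. Applying the elementary bound~\eqref{eq:vodd,maxdegree_lower} to each such $H$ and taking the minimum over all subdivisions then yields $p_{2,\topo}(G) \ge \max\{ v_\odd(G)/2,\ \lceil \Delta(G)/2 \rceil \}$, with no exceptional cases required.

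For the upper bound I need, writing $k := \max\{ v_\odd(G)/2,\ \lceil \Delta(G)/2 \rceil \}$, a single subdivision $H$ carrying a path odd-cover of size $k$. I would subdivide every edge (with enough interior vertices to leave room) and record at each original \emph{branch} vertex $v$ its $\deg_G(v) \le 2k$ incident subdivided edges. The engine of the construction is that a subdivided edge may be \emph{cut} at an interior vertex and its two halves assigned to different paths, with those paths joined through that vertex by auxiliary nonedges; by introducing such nonedges only in canceling pairs, the $\mathbb{F}_2$-symmetric difference is kept equal to $E(H)$. Concretely I would: (i) fix a transition system pairing up the incident edges at each branch vertex, producing a family of edge-disjoint trails that cover $E(G)$ with exactly $v_\odd/2$ open trails together with some closed trails; (ii) use the subdivision to repair each trail into a \emph{simple} path, rerouting through fresh interior vertices wherever a trail would revisit a branch vertex; and (iii) apply \join/\meet-type operations at interior subdivision vertices to merge the pieces (including pieces from distinct components, linked through canceling nonedges) down to exactly $k$ simple paths. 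Here $k \ge \lceil \Delta/2 \rceil$ guarantees there is room at every branch vertex, each being traversed by at most $k$ of the paths, while $k \ge v_\odd/2$ accounts for the open-trail endpoints.

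The hard part will be step (iii), and it is precisely here that the exceptional hypothesis enters. The closed trails — morally the cycles of $G$ — carry no endpoints and cannot become a path on their own, since a single cycle already requires two paths; they must therefore be absorbed into the open trails or into the degree surplus at a vertex of degree $\ge 3$. I expect the crux to be a merging/counting lemma showing that whenever $G$ is \emph{not} a disjoint union of cycles together with at most one path, there is always a spare resource — a second open trail or a branch vertex of degree at least $3$ — to which each closed trail can be attached, so that the merges can be carried out while holding the total at exactly $k$ and keeping every resulting path simple. In the excluded configurations neither resource exists, the count is forced up to $k+1$, and the formula fails by one; verifying that this is the \emph{only} failure mode, i.e.\ that the surgery succeeds in every other case, is the technical heart of the argument.
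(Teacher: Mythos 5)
Your lower bound is exactly the paper's: both quantities are invariants of the topological graph, so the elementary bound \eqref{eq:vodd,maxdegree_lower} passes to the minimum over subdivisions. Your reading of the exceptional case --- that it is precisely the situation where neither a second open trail nor a vertex of degree at least $3$ is available, forcing $k=1$ while the true answer is $2$ --- is also the right one; in the paper the hypothesis is invoked only in the base case $k=1$, where the non-exceptional $G$ must be a single path, and for $k \geq 2$ the construction works unconditionally. The upper bound, however, has a genuine gap, in two places. First, step (ii) fails as stated: a trail that revisits a branch vertex $v$ cannot be ``repaired into a simple path'' by rerouting through fresh subdivision vertices, because the subdivided half-edges incident to $v$ still terminate at $v$, and a simple path can use at most two of them. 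Take $G = K_5$: a transition system can produce a single closed Euler trail using all four edges at every vertex, and no subdivision turns that trail into one simple path; its edges must be \emph{split and distributed} among at least two paths of the final cover. Nor can nonedges rescue a single trail on its own, since a canceling pair requires two distinct paths through the same nonedge, which is coordination across pieces, not repair of one piece. So the objects you carry into step (iii) are not ``one simple path per trail'' but an uncontrolled collection of fragments, several of which compete for the two slots available at each branch vertex.

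Second, step (iii) --- your merging/counting lemma --- is the entire content of the theorem and is left as an expectation rather than proved. The missing idea is that the distribution of fragments at branch vertices must be arranged \emph{before} any merging is attempted, not discovered during it. The paper does this by $k$-coloring the edges of a subdivision so that each color appears $0$ or $2$ times at every even-degree vertex, the $v_\odd(G)$ odd-degree vertices are spread among the colors (the $i$th and $(k+i)$th odd vertex giving color $i$ its endpoints), and no subdivided edge or cycle component is monochromatic; each color class is then a linear forest, and the classes form what the paper calls a well-distributed path $k$-system. Only then does an inductive lemma (Lemma \ref{lem:path system}), using the operations $\join$ and $\ins$ together with a nontrivial case analysis to preserve well-distributedness, contract each class to a single path while preserving the symmetric difference up to further subdivision. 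Your plan defers exactly this work to an unproven lemma, and with trails as the starting decomposition the invariants one would need to induct on --- how many fragments sit at each branch vertex, and which pieces carry the $v_\odd(G)$ free endpoints --- are not even set up.
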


Another immediate lower bound for $p_2(G)$ is the linear arboricity of $G$. Indeed, if we have a path odd-cover of $G$ with $k$ paths, then by deleting duplicated edges in pairs we obtain a decomposition of $E(G)$ into $k$ edge-disjoint linear forests, and hence 
\begin{equation}\label{eq:la_lower}
    p_2(G)\geq \la(G).
\end{equation}

Moreover, linear arboricity is lower bounded by \emph{arboricity}, the number $a(G)$ of edge-disjoint forests needed to cover the edges of $G$. 
A classical result of Nash-Williams~\cite{Nash-Williams64} gives an exact formula for $a(G)$ in terms of the maximum edge-density of subgraphs of $G$. This gives us a further lower bound on $p_2(G)$:
\begin{equation}\label{eq:density_lower}
p_2(G) \geq a(G) = \max_{H \subseteq G} \frac{e(H)}{v(H)-1}, \end{equation}
where $e(H)$ and $v(H)$ denote the number of edges and the number of vertices in $H$, respectively.

In Section~\ref{subsec:loweriso}, we introduce $p_{2, \iso}(G)$, which is the minimum of $p_2(H)$ over all graphs $H$ that can be obtained from $G$ by adding isolated vertices.
Note that $p_{2,\iso}(G)$ also satisfies the lower bounds \eqref{eq:vodd,maxdegree_lower} and \eqref{eq:la_lower}.
In particular, when $G$ has many (more than $2 \cdot \la(G)$) vertices of odd degree, then $p_{2,\iso}(G)$ can be arbitrarily large compared to $\la(G)$. On the other hand, we suspect that this is essentially the only way to get a large gap in the inequality $p_{2,\iso}(G)\geq \la(G)$; in fact, for Eulerian graphs $G$ ({\em i.e.}, graphs $G$ with $v_\odd(G) = 0$), we obtain the equality $p_{2,\iso}(G) = \la(G)$. 
\begin{restatable}{thm}{isolated}\label{thm:p2vsla}
Let $G$ be an Eulerian graph. Then
\[p_{2,\iso}(G) = \la(G).\]
 Moreover, if $\la(G) \leq 2$, then
\[p_2(G) = \la(G).\]
\end{restatable}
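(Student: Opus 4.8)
The plan is to prove the two inequalities separately, with the lower bound being immediate and the upper bound carrying all the work. For the lower bound, note that adjoining isolated vertices to $G$ changes neither its linear arboricity nor the validity of \eqref{eq:la_lower}: if $H$ is obtained from $G$ by adding isolated vertices then $\la(H)=\la(G)$, so $p_2(H)\ge \la(H)=\la(G)$ by \eqref{eq:la_lower}. Taking the minimum over all such $H$ gives $p_{2,\iso}(G)\ge \la(G)$, and the same reasoning with $H=G$ gives the bound $p_2(G)\ge\la(G)$ needed for the ``moreover'' statement. So everything reduces to constructing path odd-covers of the correct size.

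For the main upper bound $p_{2,\iso}(G)\le \la(G)$, I would start from a decomposition of $E(G)$ into $k=\la(G)$ linear forests $F_1,\dots,F_k$ and convert each $F_i$ into a single path $P_i$ by adding \emph{connector} edges $A_i$, routed through newly added vertices, that link the components of $F_i$ into one path. Writing $P_i=F_i\triangle A_i$, one has $\bigtriangleup_i P_i=\left(\bigtriangleup_i F_i\right)\triangle\left(\bigtriangleup_i A_i\right)=E(G)\triangle\left(\bigtriangleup_i A_i\right)$, so the construction succeeds exactly when the connectors cancel, i.e. $\bigtriangleup_i A_i=\emptyset$. The enabling observation is a parity lemma: since $\deg_G(v)$ is even, $\deg_{F_i}(v)\in\{0,1,2\}$, and $\sum_i \deg_{F_i}(v)=\deg_G(v)$, each vertex $v$ is a degree-$1$ vertex (a component endpoint) of $F_i$ for an even number of indices $i$. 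Equivalently, in the auxiliary multigraph $\mathcal L$ on $V(G)$ whose edges are the component-endpoint pairs of all the $F_i$, every vertex has even degree. This even-degree structure is what lets the connector demands at each vertex be matched up in pairs and realized by shared connectors; the added isolated vertices serve as intermediate points, so that a shared connector can be reused in two forests without ever coinciding with an edge of $G$.

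I expect the cancellation step to be the main obstacle. One must \emph{simultaneously} linearize every $F_i$ and pair up all connectors into an even multiset, and a naive count shows the total number of connectors can have the wrong parity, which has to be corrected (for example by terminating two paired paths at a shared added endpoint). It is precisely here that Eulerianness is used: the parity lemma fails once $v_\odd(G)>0$, and in that regime the $v_\odd(G)/2$ term of \eqref{eq:vodd,maxdegree_lower} takes over and equality with $\la(G)$ genuinely breaks.

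Finally, for the ``moreover'' statement with $\la(G)\le 2$, I would show that no added vertices are needed. The cases $\la(G)=0$ (edgeless) and $\la(G)=1$ (vacuous, since a nonempty linear forest has odd-degree vertices and cannot be Eulerian) are immediate, so take $\la(G)=2$ and a decomposition $G=F_1\cup F_2$. Now the parity lemma forces each vertex to be a component endpoint in either both or neither of $F_1,F_2$, so $F_1$ and $F_2$ have the same endpoint set $O$ and the same number $m$ of components. Letting $M_i$ be the perfect matching on $O$ pairing the two ends of each component of $F_i$, the problem reduces to a purely combinatorial matching lemma: find $x,y\in O$ and a matching $A$ on $O\setminus\{x,y\}$, using only nonedges of $G$, such that both $M_1\cup A$ and $M_2\cup A$ are Hamiltonian paths of $O$. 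Given such an $A$, the paths $P_1=F_1\cup A$ and $P_2=F_2\cup A$ satisfy $P_1\triangle P_2=F_1\triangle F_2=G$ on $V(G)$ alone, so $p_2(G)\le 2$. I would establish the matching lemma by induction on the alternating-cycle structure of $M_1\cup M_2$, which is the technical heart of this case; the hypothesis $\la(G)\le 2$ is what collapses the endpoint sets to a common $O$ and thereby removes the need for any added vertices.
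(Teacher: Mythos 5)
Your architecture is the same as the paper's---the lower bound is immediate from \eqref{eq:la_lower}, and the upper bound converts a minimum linear forest decomposition into a path odd-cover using connector edges that each lie in exactly two of the final paths, with the Eulerian parity fact (every vertex is a component endpoint of an even number of the $F_i$) as the enabler---but at both places where you defer the work, the deferred claim is the entire content of the theorem, so the proposal has genuine gaps. For $p_{2,\iso}(G)\le\la(G)$, asserting that the even-degree structure ``lets the connector demands at each vertex be matched up in pairs and realized by shared connectors'' is not a proof; one needs an explicit operation and an argument that it can always be applied. The paper's operation $\meet(u,v)$ adds a new vertex $w$, puts both $uw$ and $vw$ into $H_i$ (merging two components $P,P'$ of $H_i$), puts $uw$ into a forest having a component $Q$ that ends at $u$, and puts $vw$ into a forest having a component $R$ that ends at $v$; the crux is that $P,P',Q,R$ must be four \emph{distinct} paths (otherwise the operation closes a cycle rather than producing linear forests), and the paper shows such a choice always exists by swapping to other endpoints when the outside components coincide. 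Nothing in your sketch addresses this. Moreover, your parity worry is a non-issue in the added-vertex framework (every added edge lies in exactly two forests by construction, so no count can go wrong), and the fix you suggest---``terminating two paired paths at a shared added endpoint''---would, as described, put each of the two new edges into only one forest and thereby destroy the required cancellation $\bigtriangleup_i A_i=\emptyset$.

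For the case $\la(G)\le 2$, your reduction is sound: the common endpoint set $O$, the matchings $M_1,M_2$, and the requirement that $M_1\cup A$ and $M_2\cup A$ be Hamiltonian paths on $O$ capture exactly what is needed. In fact the nonedge condition you impose on $A$ is automatic: for $u,v\in O$, an edge $uv\in E(G)$ forces the component containing it to be the single edge $uv$ (both ends have degree $1$ in that forest), hence $uv\in M_1\cup M_2$, and any $A$ making both unions simple Hamiltonian paths is already disjoint from $M_1\cup M_2$. But the matching lemma itself---what you call the technical heart---is precisely what the paper proves, by iterating its $\join$ operation: with $s\ge 2$ components, pick a shared endpoint $v$ of $P\in\mathcal P_1$ and $Q\in\mathcal P_2$, then an endpoint $x$ of a second path $P'\in\mathcal P_1$ avoiding the other endpoints of $P$ and $Q$; the path $Q'\in\mathcal P_2$ ending at $x$ is then distinct from $Q$, and joining $v$ to $x$ merges $P$ with $P'$ and $Q$ with $Q'$, reducing both component counts by one. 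Since you only name the induction you would attempt rather than carry it out, the proposal as written is a correct plan with the paper's architecture, but it is missing the proofs of both of its load-bearing steps.
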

In the same section,
we exhibit a family of Eulerian graphs $G$ for which $p_{2,\iso}(G)<p_2(G)$. This implies, in particular, that $p_2(G)$ is not always the maximum of $\frac{v_{\odd}(G)}{2}, \left\lceil \frac{\Delta(G)}{2}\right \rceil$, and $a(G)$.

In Section~\ref{sec:upperbounds}, we prove a general upper bound for $p_2(G)$ resembling Equation~\eqref{eq:vodd,maxdegree_lower}. For path decompositions, $\max{ \left\{ \frac{v_\odd (G)}{2} , \frac{\Delta(G)}{2} \right\} }$ is often a poor lower bound on $p(G)$. For example, a vertex-disjoint union of $\ell$ cycles has $\max\left\{\frac{v_\odd}{2}, \lceil \frac{\Delta}{2} \rceil\right\} = 1$, while its path decomposition number is clearly $2\ell$. In contrast, the path odd-cover number of this graph is $2$ (see Figure~\ref{fig:cyclecover}).
It turns out that $\max{ \left\{ \frac{v_\odd (G)}{2} , \frac{\Delta(G)}{2} \right\} }$ describes $p_2(G)$ quite well in general. One of our main results is an upper bound on $p_2(G)$ in terms of $\Delta(G)$ and $v_{\odd}(G)$ of the following form.

\begin{restatable}{thm}{DeltaVSodd}
\label{thm:upperboundDeltaVSo2}
Let $G$ be a graph. Then 
\begin{align*}
    p_2(G) \leq \max\left\{\frac{v_{\odd}(G)}{2}, 2\left\lceil \frac{\Delta(G)}{2}\right \rceil\right\}.
\end{align*}
\end{restatable}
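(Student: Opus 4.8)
The plan is to prove the bound by constructing an explicit path odd-cover. Write $d=\lceil\Delta(G)/2\rceil$, so that the claimed bound is $N:=\max\{v_\odd(G)/2,\,2d\}$; since $2\lceil\Delta/2\rceil$ depends only on the even number $2d$, I may as well think of $G$ as having even maximum degree $2d$. The first step is a structural decomposition: I would decompose $E(G)$ into $d$ edge-disjoint subgraphs $H_1,\dots,H_d$, each of maximum degree at most $2$ (so each $H_i$ is a vertex-disjoint union of paths and cycles). The standard way to produce this is to embed $G$ into a $2d$-regular graph, take a $2$-factorization into $d$ two-factors (via an Eulerian orientation and K\H{o}nig's edge-coloring of the associated bipartite graph), and restrict each $2$-factor back to $E(G)$. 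The extra freedom I want to exploit is \emph{parity control}: by attaching the helper edges only at vertices of deficient degree, I aim to arrange that each odd-degree vertex of $G$ is an endpoint of a path-component in exactly one $H_i$ and each even-degree vertex is an endpoint in none. With such a ``nice'' decomposition the total number of path-components over all parts is exactly $v_\odd(G)/2$.

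The second ingredient is a gadget showing that a single vertex-disjoint union of $p$ paths and arbitrarily many cycles can be odd-covered by only $\max\{p,2\}$ paths. The idea generalizes the cycle trick of Figure~\ref{fig:cyclecover}: write each cycle as a long path plus a single ``chord'' edge, and build one output path $A$ that runs through two of the given path-components $Q_1,Q_2$ while threading all the long cycle-pieces in between, joining consecutive pieces by nonedges. A single short ``corrector'' path $B$ then collects exactly the chords together with the connecting nonedges; one checks that $B$ is genuinely a path (its edges alternate chord, connector, chord, $\dots$) and that $A\triangle B$ restores all the paths and cycles, since every connecting nonedge appears in both $A$ and $B$ and cancels. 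The remaining components $Q_3,\dots,Q_p$ are output unchanged, for a total of $p$ paths when $p\ge 2$, and $2$ paths when $p\le 1$ but a cycle is present. This matches the lower bound $v_\odd/2$ locally.

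The final step is the global assembly, where the two terms of $N$ must act as a single budget rather than a sum. In the regime $v_\odd(G)/2\ge 2d$ I would output all $v_\odd(G)/2$ path-components as ``threads'' and absorb every cycle into them at no extra cost, so the count stays at $v_\odd(G)/2$. In the regime $v_\odd(G)/2<2d$ there are few path-components, the cycle-handling dominates, and I aim to spread all cycles across at most $2d$ output paths. The delicate point is that one cannot simply apply the per-part gadget and sum, because $\sum_i\max\{P_i,2\}$ can exceed $N$ (e.g.\ parts carrying cycles but few paths each cost $2$); the savings only materialize by absorbing cycles from \emph{several} parts into a common family of threads.

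The main obstacle is exactly this cross-part absorption. Within one part the paths and cycles are vertex-disjoint, but cycles from different parts may share vertices with each other and with the path-components, so they cannot all be threaded into one simple output path. The crux is the Eulerian core $v_\odd(G)=0$, where I must cover an even graph of maximum degree $2d$ by $2d$ paths even though a decomposition into $d$ vertex-disjoint-cycle classes (which would trivially give $2d$ via the gadget) need not exist. I expect to resolve this by choosing the $2$-factorization so that the only path-endpoints created by restriction lie at the helper attachments, and then distributing the overlapping cycles across the $2d$ available path-slots so that each output path remains simple and the total never exceeds $N$. Making this distribution and its bookkeeping precise—so that the bound is the maximum of $v_\odd(G)/2$ and $2d$ rather than their sum—is the heart of the argument.
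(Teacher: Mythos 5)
Your proposal has genuine gaps at exactly the points you yourself flag as ``the heart of the argument,'' and one of your structural beliefs is false in a way that matters. You assert that in the Eulerian core $v_\odd(G)=0$ a decomposition of $E(G)$ into $d$ classes, each a vertex-disjoint union of cycles, ``need not exist.'' In fact it always exists, and proving this is the key step of the paper's argument: Lemma~\ref{lem:disjointcyclesmaxdegvertices} shows, via Hall's theorem applied to a bipartite graph built from a balanced orientation, that every Eulerian graph contains a set of vertex-disjoint cycles through all of its maximum-degree vertices; deleting such a set keeps the graph Eulerian and lowers the maximum degree by $2$, so iterating partitions $E(G)$ into $\Delta/2$ such classes. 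Combined with your threading gadget (which, restricted to cycles, is exactly the paper's Lemma~\ref{lem:disjointcyclestwopaths}), this settles the Eulerian case with $\Delta$ paths. So the obstacle you propose to ``resolve by choosing the $2$-factorization'' carefully is not an obstacle at all --- but your sketch neither proves the decomposition nor supplies a substitute for it, so the Eulerian case is simply open in your write-up.

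The second, larger gap is the assembly in the presence of odd-degree vertices, where the two budgets $v_\odd(G)/2$ and $2\lceil\Delta(G)/2\rceil$ must merge into a maximum rather than a sum. Your parity-controlled decomposition is asserted rather than proved, and even granting it, you correctly observe that the per-part costs $\max\{p_i,2\}$ need not sum to the claimed bound (parts containing cycles but fewer than two path components overshoot), while cross-part absorption is blocked by vertex-sharing between parts. You offer no mechanism to fix this, and this is precisely where the paper's technical work lives: it takes the symmetric difference of $G$ with a matching $M_\odd$ on the odd-degree vertices to get an Eulerian graph, decomposes that graph into cycle classes as above, and then \emph{integrates two matching edges into the same two paths} that odd-cover a cycle class. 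This integration is delicate --- it occupies Lemmas~\ref{lem:disjointcyclesplusedgeandcommonendpointtwopaths} through~\ref{lem:fouredgestworounds4paths} and Lemma~\ref{lem:Eulerianplusmatching}, including an exceptional $K_4$-subdivision configuration that forces a careful choice of which pair of matching edges accompanies which class --- and your proposal contains no counterpart to it. In short: your gadget is a correct and relevant ingredient, but the two steps you defer (the cycle-class decomposition and the two-edges-per-class bookkeeping) are the proof, not details of it.
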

In particular, for any $n$-vertex graph $G$ with $\frac{v_\odd(G)}{2} \geq 2\left\lceil \frac{\Delta(G)}{2}\right\rceil$, the number of odd-degree vertices $v_{\odd}(G)$ determines the path odd-cover number exactly. On the other hand, if $G$ has few odd-degree vertices, then the bound on $p_2(G)$ given in Theorem \ref{thm:upperboundDeltaVSo2} has the advantage of being in terms of $\Delta(G)$ instead of $n$ as in the upper bounds on $p(G)$ from Lov\'asz \cite{lovasz1968covering}, Yan \cite{yan1998}, and Dean and Kouider \cite{dean2000}. This improvement is made possible by the use of nonedges of the graph in the odd-cover setting. We note that a similar phenomenon occurs in the context of bounding the combinatorial and the circuit diameter of partition polytopes \cite{b-13, bv-19a}. In Section~\ref{sec:firstupperbound} we elaborate on this connection and adapt the techniques in \cite{b-13} to derive a first, weaker upper bound on $p_2(G)$. In Section~\ref{sec:improvedupperbound} we refine these techniques to prove Theorem~\ref{thm:upperboundDeltaVSo2}.

In Section~\ref{sec:cycleOdd}, we adapt our arguments to cycle odd-covers of Eulerian graphs, and we obtain bounds analogous to those of Theorems~\ref{thm:topological}, \ref{thm:p2vsla}, and \ref{thm:upperboundDeltaVSo2}. Recall that the cycle odd-cover number $c_2(G)$ denotes the minimum cardinality of a cycle odd-cover of $G$. Note that $G$ admits a cycle odd-cover if and only if $G$ is Eulerian since the class of Eulerian graphs is closed under symmetric differences. This class is also closed under taking subdivisions and adding isolated vertices, so we may define $c_{2,\topo}(G)$ and $c_{2,\iso}(G)$ analogously to $p_{2,\topo}(G)$ and $p_{2,\iso}(G)$ for any Eulerian graph $G$, respectively. Specifically, we define $c_{2,\topo}(G)$ to be the minimum value of $c_2(H)$ over all subdivisions $H$ of $G$, and we define $c_{2,\iso}(G)$ to be the minimum value of $p_2(H)$ over all graphs $H$ that can be obtained from $G$ by adding isolated vertices.

An immediate lower bound for $c_2(G)$ is 
\[c_2(G) \geq \frac{\Delta(G)}{2}.\]
Taking $G$ to be $d$-regular with edge connectivity less than $d$ makes the inequality strict. The quantity $\Delta(G)/2$ is also a lower bound for $c_{2,\topo}(G)$ and $c_{2,\iso}(G)$. For upper bounds, the following bounds follow straightforwardly from our methods on path odd-covers.

\begin{restatable}{thm}{CycleOddCover}\label{thm:cycleOddCover}
    For any Eulerian graph $G$, we have the following.
    \begin{enumerate}
        \item $c_{2,\topo}(G) = \frac{\Delta(G)}{2}$, provided $G$ is not a union of two or more vertex-disjoint cycles;
        \item $c_{2,\iso}(G) \leq \la(G)$;
        \item $c_2(G) \leq \Delta(G)$.
    \end{enumerate}
\end{restatable}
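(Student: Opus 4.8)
The plan is to obtain each of the three bounds as the Eulerian specialization of the corresponding path-odd-cover theorem, exploiting one structural observation throughout: if $G$ is Eulerian and $\{P_1,\dots,P_k\}$ is any path odd-cover of $G$, then since $\deg_G(v)\equiv\#\{i: v\text{ is an endpoint of }P_i\}\pmod 2$ and $\deg_G(v)$ is even, every vertex is an endpoint of an \emph{even} number of the $P_i$. This ``even-endpoint'' property is exactly what will let me close paths into cycles with all spurious edges cancelling. Note also that $v_\odd(G)=0$ and $\Delta(G)$ is even, so $\max\{v_\odd(G)/2,\lceil\Delta(G)/2\rceil\}=\Delta(G)/2$ and $2\lceil\Delta(G)/2\rceil=\Delta(G)$; thus each target is literally the Eulerian value of the corresponding path bound.

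For part (2) I would argue directly. By Theorem~\ref{thm:p2vsla} there is a graph $H$, equal to $G$ together with some isolated vertices, admitting a path odd-cover $\{P_1,\dots,P_k\}$ with $k=\la(G)$, and $H$ is still Eulerian. Introduce one further isolated vertex $z$ and close each path through $z$, i.e. set $C_i=P_i+a_iz+zb_i$, where $a_i,b_i$ are the endpoints of $P_i$. Each $C_i$ is a genuine simple cycle, and for a vertex $v$ the edge $zv$ occurs in exactly $\#\{i:v\text{ is an endpoint of }P_i\}$ of the cycles, which is even by the even-endpoint property, so all edges at $z$ cancel; every edge of $G$ still occurs an odd number of times and every other nonedge an even number of times. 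Hence $\{C_1,\dots,C_k\}$ is a cycle odd-cover of $H\cup\{z\}$, a copy of $G$ with isolated vertices added, giving $c_{2,\iso}(G)\le k=\la(G)$.

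For part (1) the lower bound $c_{2,\topo}(G)\ge\Delta(G)/2$ is already recorded, so only the matching upper bound is needed, and I would mirror the construction behind Theorem~\ref{thm:topological}. That theorem produces, on a suitable subdivision of $G$, a path odd-cover of size $\Delta(G)/2$; the new difficulty is that a cycle, unlike a path, cannot terminate at a subdivision vertex, so these paths must be closed into \emph{simple} cycles. I would do this by again invoking the even-endpoint property to pair up the loose ends and route the closing arcs through additional subdivision vertices (a subdivided gadget playing the role of the apex $z$ above), chosen so that every added arc is traversed an even number of times and hence cancels, while the two real edges created at each gadget vertex are still covered an odd number of times. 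The degenerate cases must be separated: a single cycle is already one cycle, matching $\Delta(G)/2=1$, while a disjoint union of two or more cycles is exactly the family that cannot reach the bound (one simple cycle is connected, so at least two cycles are forced even though $\Delta(G)/2=1$), which is why it is excluded.

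Part (3) is where I expect the real work, since here neither isolated vertices nor subdivisions are available, so the clean apex closure fails. My plan is to re-run the inductive construction proving Theorem~\ref{thm:upperboundDeltaVSo2} (the refined partition-polytope argument of Section~\ref{sec:improvedupperbound}) with ``cycle'' in place of ``path'' throughout. The point is that the odd-degree term $v_\odd(G)/2$, which in the path argument forces certain trails to be left open, vanishes for Eulerian $G$; every object the procedure builds can therefore be completed to a closed trail, and after the usual cancellations one is left with simple cycles, yielding the bound $2\lceil\Delta(G)/2\rceil=\Delta(G)$ exactly as in the path case. The main obstacle, and the step I would spend the most care on, is verifying that each merging or rerouting step of that construction preserves \emph{connectedness and $2$-regularity} of the current objects, i.e. that they remain single cycles rather than degenerating into disjoint even subgraphs, precisely because the auxiliary vertex used to glue things together in parts (1) and (2) is not permitted here.
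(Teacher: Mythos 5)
Your part (2) is correct and is exactly the paper's proof: take the $\la(G)$-path odd-cover of $G$ plus isolated vertices from Theorem~\ref{thm:p2vsla}, add one further isolated vertex, close every path through it, and use the parity fact that in an Eulerian graph every vertex is an endpoint of an even number of paths of any path odd-cover, so that all apex edges cancel.

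Parts (1) and (3), however, are plans rather than proofs, and in each case the step you leave open is where the actual content lies. For (1), the ``subdivided gadget playing the role of the apex'' cannot be realized: in the topological relaxation you may only subdivide edges, and every vertex so created sits on a real edge of the subdivision, hence is an internal vertex of each path covering that edge; a cycle cannot both pass through such a vertex (to cover those two real edges) and use it as a hub and remain a simple cycle. Moreover, every closing arc you add is a nonedge and must be used an even number of times, so the arcs must be matched in pairs across the cycles you build; you give no scheme that achieves this, and in the extremal case where all $\Delta(G)/2$ paths are linked end-to-end in one cyclic sequence whose members jointly cover every edge of the subdivision, no vertex --- newly created or not --- avoids all of the paths it would have to serve as a hub for. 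The paper's proof needs no hub at all: it closes a path $P_1$ with the genuine chord $uv$ joining its own two endpoints and then cancels that chord \emph{inside the system}, via $\ins$ followed by $\join$, absorbing $uv$ into a neighbouring path after subdividing one of its terminal edges so that everything remains a path system on a subdivision. That compensation mechanism is the idea missing from your sketch.

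For (3), re-running the machinery of Section~\ref{sec:improvedupperbound} is both more than is needed and, as you yourself concede, unverified. For Eulerian $G$ we have $v_\odd(G)=0$, so the matching $M_\odd$ is empty and none of the edge-integration lemmas --- hence none of the ``merging or rerouting'' steps whose preservation of connectedness and $2$-regularity worries you --- is ever invoked; the construction of Theorem~\ref{thm:upperboundDeltaVSo2} degenerates to iterating Lemma~\ref{lem:disjointcyclesmaxdegvertices} to partition $E(G)$ into $\Delta(G)/2$ sets of vertex-disjoint cycles and odd-covering each set by two paths via Lemma~\ref{lem:disjointcyclestwopaths}. What you still need, and never state, is the closing observation the paper uses: each such set is $2$-regular, so the two covering paths $P_i,Q_i$ share both endpoints $u_i,v_i$, and adding the single chord $u_iv_i$ to both paths produces two cycles $C_i,D_i$ with $C_i\oplus D_i=P_i\oplus Q_i$. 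This yields $\Delta(G)$ cycles immediately; without it (or a substitute), neither (1) nor (3) is established.
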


In Section \ref{sec:openquestions}, we conclude with some open questions that arise from our work.

\section{Topological graphs and isolated vertices}\label{sec:lowerbounds}

We have seen that the path odd-cover number of a graph $G$ is bounded below by $v_{\odd}(G)/2$, $\la(G)$, and, in turn, $\lceil \Delta(G)/2 \rceil$ and $\max_{H \subseteq G} e(H) / (v(H) - 1)$ (see Equations~\eqref{eq:vodd,maxdegree_lower}, \eqref{eq:la_lower}, and~\eqref{eq:density_lower}).
In this section, we introduce two related invariants to show that the bound $\max\{\lceil \Delta(G)/2 \rceil, v_{\odd}(G)/2\}$ is tight for some subdivision of $G$ (with a few exceptional cases), and that if $G$ is Eulerian, then by adding isolated vertices to $G$ we can achieve the bound $\la(G)$.
Additionally, we present a family of Eulerian graphs for which adding an isolated vertex decreases the path odd-cover number; more precisely, for this family of graphs $p_2(G)$ exceeds the maximum of these bounds by $1$.

\subsection{Topological problem} 
\label{subsec:lowertopo}
Here we consider what types of constraints the topological structure of a graph can give on its path odd cover number. A \emph{topological graph} is an equivalence class of graphs, where $G_1 \sim G_2$ if $G_1$ and $G_2$ have a common subdivision, up to isomorphism. Note that if $H$ is a subdivision of $G$, then a path odd-cover of $G$ naturally gives a path odd-cover of $H$, and so we have $p_2(H) \leq p_2(G)$. We define the \emph{topological path odd-cover number} $p_{2,\topo}(G)$ to be the minimum of $p_2(H)$ over all subdivisions $H$ of $G$. We can think of this as the path odd-cover number of $G$ as a topological graph. By design, $p_{2,\topo}(G) \leq p_2(G)$.

The quantities $v_{\odd}(G)/2$ and $\lceil \Delta(G)/2 \rceil$ are invariants of a topological graph $G$, and so we have
\begin{equation}\label{eq:vodd,maxdegree_lower_topological}
    \max\left\{\frac{v_{\odd}(G)}{2}, \left\lceil \frac{\Delta(G)}{2}\right \rceil\right\} \leq p_{2,\topo}(G).
\end{equation}
If $G$ is the disjoint union of at least one cycle with at most one path, then $\max\left\{\frac{v_{\odd}(G)}{2}, \left\lceil \frac{\Delta(G)}{2}\right \rceil\right\} = 1$, while $p_{2,\topo}(G) = 2$. It turns out that this is the only case in which the inequality in (\ref{eq:vodd,maxdegree_lower_topological}) is strict. That is, $v_{\odd}(G)/2$ and $\lceil \Delta(G)/2 \rceil$ are essentially the only topological constraints on $p_2(G)$.
\Topological*

To prove this result, we first define a \emph{path $k$-system} to be a family $\{\mathcal P_1, \ldots, \mathcal P_k\}$, where each $\mathcal P_i$ is a non-empty collection of vertex-disjoint paths in $K_n$, such that each endpoint $v$ of a path in $\mathcal P := \bigcup_{i = 1}^k \mathcal P_i$ is one of the following types:
\begin{itemize}
    \item \emph{type I}: $v$ is the endpoint of exactly one path from $\mathcal P$, counting repetitions;
    \item \emph{type II}: $v$ is the endpoint of exactly two paths $P$ and $Q$ from $\mathcal P$ (from distinct collections $\mathcal P_i$ and $\mathcal P_j$),
    counting repetitions, and the respective terminal edges $uv \in P$ and $wv \in Q$ are distinct; further, $v$ appears in no other path of $\mathcal P$.
\end{itemize}
We call a path $k$-system \emph{well-distributed} if the following conditions hold:
\begin{enumerate}[(i)]
\item Each $\mathcal P_i$ has at most two type I endpoints.
\item If some $\mathcal P_i$ has two type I endpoints, then every $\mathcal P_i$ has at least one type I endpoint.
\item No single path $P \in \mathcal P$ has two type I endpoints, unless some $\mathcal P_i = \{P\}$.
\end{enumerate}
We now prove the following lemma, from which we will derive Theorem \ref{thm:topological}.
\begin{lem}\label{lem:path system}
Let $\{\mathcal P_1, \ldots, \mathcal P_k\}$ be a well-distributed path $k$-system with $\mathcal P = \bigcup_{i = 1}^k \mathcal P_i$, and let $G$ be a graph with edge set
\[E(G) = \bigoplus_{P \in \mathcal P}P.\]
Then
\[p_{2, \topo}(G) \leq k.\]
\end{lem}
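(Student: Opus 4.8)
The plan is to turn the system into a single path for each of the $k$ collections, so that these $k$ paths form a path odd-cover of a suitable subdivision $H$ of $G$; since $p_{2,\topo}(G)=\min_H p_2(H)$ over subdivisions $H$ of $G$, this gives the bound. I would first read off the combinatorial skeleton by forming the multigraph $M$ whose vertices are the endpoints occurring in $\mathcal P$ and which has one edge of color $i$ for each path of $\mathcal P_i$. A type I vertex lies on exactly one such edge and a type II vertex on exactly two of different colors, so $\Delta(M)\le 2$; thus $M$ is a disjoint union of paths and cycles, and because the paths of a single $\mathcal P_i$ are vertex-disjoint, each color class is a matching of $M$. In these terms a type I endpoint of $\mathcal P_i$ is a degree-$1$ vertex of $M$ on a color-$i$ edge, and condition (i) says each color meets at most two degree-$1$ vertices.

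The construction is to build, for each color $i$, a single path $R_i$ out of the color-$i$ segments by joining consecutive ones with short \emph{bridges} placed at the type II vertices. The key point is that each type II vertex $v$ is shared by exactly two colors, say $i$ and $j$, whose terminal edges $uv$ and $wv$ are distinct; I would route the bridge leaving $v$ so that it is traversed by both $R_i$ and $R_j$, whence the bridge edges appear twice and cancel in the symmetric difference, while the genuine terminal edges $uv$ and $wv$ each appear once and survive. Thus $\bigoplus_{i=1}^k R_i$ retains exactly the segment edges, i.e.\ equals $E(G)$ once the bridges are introduced. Since a type I endpoint $v$ of color $i$ lies on no other color, no canceling bridge is available there, so such a $v$ must be left as a free end of $R_i$; condition (i) guarantees that color $i$ has at most two such forced ends, which is exactly what is needed for the color-$i$ segments to be assembled into one path rather than several. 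The bridges are realized by routing through fresh subdivision vertices, which turns $G$ into a subdivision $H$ and is the reason the statement concerns $p_{2,\topo}$ rather than $p_2$; this also lets the $R_i$ pass independently through a vertex of $G$ of degree larger than two (where several colors cross), each color contributing one passage.

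The delicate point — and what I expect to be the main obstacle — is to make the bridge choices globally consistent: one must string the color-$i$ segments in some order and pair up their type II ends so that (a) every bridge is shared by exactly two colors and hence cancels, and (b) each $R_i$ closes up into a simple path rather than a cycle or a self-touching walk. A color all of whose endpoints are type II has no forced free end, and the naive assembly would produce a closed curve; this is precisely the cyclic-component phenomenon, and I expect conditions (ii) and (iii) to be exactly the hypotheses that keep it from forcing an extra path — condition (iii) pinning down the isolated single-segment colors, and condition (ii) ensuring that whenever the type I budget is fully used somewhere, every color still has a free end at which to open its path. I would organize the consistency argument on the components of $M$: a color whose segments lie on path-components can be opened at type I ends into a genuine path, whereas cycles in $G$ arising from cycle-components of $M$ (which by properness force $k\ge 2$) I would instead absorb into two of the remaining paths by the threading construction of Figure~\ref{fig:cyclecover}, keeping the total at most $k$. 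Verifying that conditions (i)--(iii) make this bookkeeping succeed in every boundary case is where the real work lies.
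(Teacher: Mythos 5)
Your plan captures the right mechanism---double ``bridge'' edges placed at type II endpoints, which cancel in the symmetric difference and are realized via subdivision so that the conclusion concerns $p_{2,\topo}$---and this is indeed the engine of the paper's proof (its $\join$ operation). But there is a genuine gap, and you have located it yourself: everything hinges on ``making the bridge choices globally consistent,'' and that consistency argument is the entire content of the lemma, not a boundary-case verification that can be deferred. As stated, your scheme only permits a bridge between two type II endpoints shared by the \emph{same} pair of colors (otherwise the bridge cannot cancel), and such pairs need not be usable. Concretely, take $k=3$ and six segments forming a single closed curve on which the colors $1,2,3,1,2,3$ alternate. All six endpoints are type II, and the three same-pair bridges available (one for each of the pairs $(1,2)$, $(2,3)$, $(1,3)$) cannot be used simultaneously, since each color would then close into a cycle; while after using just one of them, the two remaining candidate pairs each have both endpoints on a single path of one color, so joining either pair would again create a cycle, and the process is stuck. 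Your fallback---absorbing cycle components of the auxiliary multigraph $M$ into two other paths by the threading of Figure~\ref{fig:cyclecover}---does not rescue this either: when such a multicolored closed component must also connect to segments of the same colors elsewhere in the system, the bridges attaching two outside paths to the closed curve must themselves cancel in pairs, which is the same consistency problem over again.

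The paper escapes exactly this trap with a second operation that your proposal has no analogue of: $\ins(u,\mathcal P_i,\mathcal P_r)$, which subdivides a terminal edge at a type II endpoint and inserts a one-edge path of a \emph{third} collection there, manufacturing a usable same-pair configuration at the cost of one extra segment. The induction then works because a $\join$ (removing two segments) is always reachable after at most one $\ins$ (adding one), and proving that this can always be done while preserving conditions (i)--(iii) is precisely the case analysis that constitutes the paper's proof. Without an insertion device, your pairing can genuinely get stuck, so the proposal as written is a correct strategy outline, in the same spirit as the paper, but not a proof.
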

\begin{proof}
    We induct on $|\mathcal P|$. If $|\mathcal P| = k$, then the result is trivial. We now assume $|\mathcal P| > k$.
    
    The goal is to replace our well-distributed path $k$-system with one which preserves $ \bigoplus_{P \in \mathcal P} P$ and reduces $|\mathcal P|$ by at least one. We use two operations to accomplish this.
    
    The first is applied to two type II endpoints $u$ and $v$ shared by the same two collections $\mathcal P_i$ and $\mathcal P_j$ in four distinct paths $P, P' \in \mathcal P_i$ and $Q, Q' \in \mathcal P_j$. We replace $\mathcal P_i$ with $\mathcal P_i \setminus \{P, P'\} \cup \{P \oplus uv \oplus P'\}$ and $\mathcal P_j$ with $\mathcal P_j \setminus \{Q, Q'\} \cup \{Q \oplus uv \oplus Q'\}$. In essence, this adds a double edge $uv$ to join two pairs of paths $P, P' \in \mathcal P_i$ and $Q, Q' \in \mathcal P_j$. We denote this operation by $\join(u,v)$.

    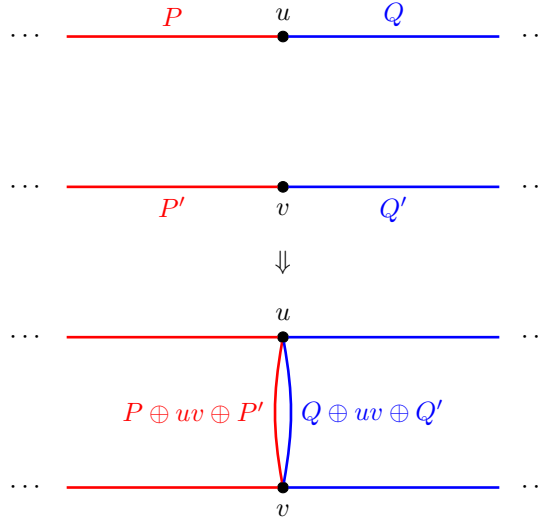
\begin{figure}[ht]
  \centering
  \begin{tikzpicture}[vert/.style={draw, fill=black, circle, inner sep=0pt, minimum size = 4pt, outer sep=0pt}, r_edge/.style={draw=red,line width= 1,>=latex,red}, b_edge/.style={draw=blue,line width= 1,>=latex,blue}, k_edge/.style={draw=black,line width= 1,>=latex,black}, label/.style = {draw=none, fill=none}]
    \node[vert] (u) at (0,2) {};
    \node (p1) at (-2,2) {};
    \node(p2) at (-3,2) {};
    \node (q1) at (2,2) {};
    \node(q2) at (3,2) {};

    \node[vert] (v) at (0,0) {};
    \node (pp1) at (-2,0) {};
    \node(pp2) at (-3,0) {};
    \node (qq1) at (2,0) {};
    \node(qq2) at (3,0) {};
    
    \draw (u) edge [r_edge] node[label, above] {$P$} (p2);
    \draw[b_edge] (u) edge node[label, above] {$Q$} (q2);
    
    \draw (v) edge [r_edge] node[label, below] {$P'$} (pp2);
    \draw (v) edge [b_edge] node[label, below] {$Q'$} (qq2);

    \node[label] (shift) at (0,-4) {};

    \node[label] at (0,-1) {$\Downarrow$};
    
    \node[vert] (ju) at ($(u) + (shift)$) {};
    \node (jp1) at ($(p1) + (shift)$) {};
    \node(jp2) at ($(p2) + (shift)$) {};
    \node (jq1) at ($(q1) + (shift)$) {};
    \node(jq2) at ($(q2) + (shift)$) {};

    \node[vert] (jv) at ($(v) + (shift)$) {};
    \node (jpp1) at ($(pp1) + (shift)$) {};
    \node(jpp2) at ($(pp2) + (shift)$) {};
    \node (jqq1) at ($(qq1) + (shift)$) {};
    \node(jqq2) at ($(qq2) + (shift)$) {};
    
    \draw (ju) edge [r_edge] (jp2);
    \draw[b_edge] (ju) edge (jq2);
    
    \draw (jv) edge [r_edge] (jpp2);
    \draw (jv) edge [b_edge] (jqq2);

    \draw (ju) edge[r_edge,bend right=10] node[label, left] {$P \oplus uv \oplus P'$} (jv);
    \draw (ju) edge[b_edge,bend left=10] node[label, right] {$Q \oplus uv \oplus Q'$} (jv);
    
    \node[label](hoffset) at (0.4,0) {};
    \node[label](voffset) at (0,-0.3) {};
    \node[label] at ($(u)-(voffset)$) {$u$};
    \node[label] at ($(v)+(voffset)$) {$v$};
    \node[label] at ($(p2)-(hoffset)$) {$\cdots$};
    \node[label] at ($(pp2)-(hoffset)$) {$\cdots$};
    \node[label] at ($(q2)+(hoffset)$) {$\cdots$};
    \node[label] at ($(qq2)+(hoffset)$) {$\cdots$};
    \node[label] at ($(ju)-(voffset)$) {$u$};
    \node[label] at ($(jv)+(voffset)$) {$v$};
    \node[label] at ($(jp2)-(hoffset)$) {$\cdots$};
    \node[label] at ($(jpp2)-(hoffset)$) {$\cdots$};
    \node[label] at ($(jq2)+(hoffset)$) {$\cdots$};
    \node[label] at ($(jqq2)+(hoffset)$) {$\cdots$};
  \end{tikzpicture}
  \caption{The operation $\join(u,v)$. \label{fig:join operation}}
\end{figure}

    The second is applied, assuming $k \geq 3$, to a type II endpoint $u$ shared by $P \in \mathcal P_i$ and $Q \in \mathcal P_j$ with $i \neq j$. We subdivide the terminal edge $zu$ of $P$ into $zu = zz' \oplus z'u$, add the one-edge path $z'u$ to some collection $\mathcal P_r$ distinct from $\mathcal P_i$ and $\mathcal P_j$, and replace $\mathcal P_i$ with $\mathcal P_i \setminus \{P\} \cup \{P \oplus z'u\}$.
    The effect is inserting a new $\mathcal P_r$ path at $u$ by subdividing the incident edge from $\mathcal P_i$. We denote this operation by $\ins(u, \mathcal P_i, \mathcal P_r)$.

\begin{figure}[ht]
  \centering
  \begin{tikzpicture}[vert/.style={draw, fill=black, circle, inner sep=0pt, minimum size = 4pt, outer sep=0pt}, r_edge/.style={draw=red,line width= 1,>=latex,red}, b_edge/.style={draw=blue,line width= 1,>=latex,blue}, y_edge/.style={draw=yellowish,line width= 1,>=latex,yellowish}, p_edge/.style={draw=orange,line width= 1,>=latex,orange}, label/.style = {draw=none, fill=none}]
    \node[vert] (u) at (0,2) {};
    \node[vert] (p1) at (-2,2) {};
    \node(p2) at (-3,2) {};
    \node[vert] (q1) at (2,2) {};
    \node(q2) at (3,2) {};

    \node[vert] (uu) at (0,0) {};
    \node[vert] (pp1) at (-2,0) {};
    \node(pp2) at (-3,0) {};
    \node[vert] (qq1) at (2,0) {};
    \node(qq2) at (3,0) {};
    \node[vert] (zz) at (-1,0) {};
    
    \draw (u) edge [r_edge] node[label, above] {$P$} (p1);
    \draw[r_edge] (p1) -- (p2);
    \draw[b_edge] (u) edge node[label, above] {$Q$} (q1);
    \draw[b_edge] (q1) -- (q2);
    
    \draw (zz) edge [r_edge] node[label, below] {$P \oplus z'u$} (pp1);
    \draw[r_edge] (pp1) -- (pp2);
    \draw (v) edge [b_edge] node[label, below] {$Q$} (qq1);
    \draw[b_edge] (qq1) -- (qq2);
    \draw (zz) edge [p_edge] node[label, above] {$z'u$} (uu);

    \node at (0,1) {$\Downarrow$};
    
    \node[label](hoffset) at (0.4,0) {};
    \node[label](voffset) at (0,-0.3) {};
    \node[label] at ($(u)-(voffset)$) {$u$};
    \node[label] at ($(p1)-(voffset)$) {$z$};
    \node[label] at ($(uu)-(voffset)$) {$u$};
    \node[label] at ($(zz)-(voffset)$) {$z'$};
    
    \node[label] at ($(p2)-(hoffset)$) {$\cdots$};
    \node[label] at ($(pp2)-(hoffset)$) {$\cdots$};
    \node[label] at ($(q2)+(hoffset)$) {$\cdots$};
    \node[label] at ($(qq2)+(hoffset)$) {$\cdots$};
  \end{tikzpicture}
  \caption{The operation $\ins(u,\mathcal P_i, \mathcal P_r$). \label{fig:insert operation}}
\end{figure}
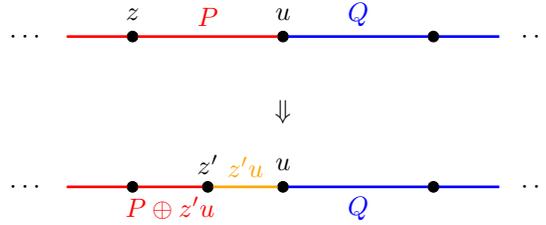
    
    We note that each of these operations indeed gives a new path $k$-system and preserves $\bigoplus_{P \in \mathcal P} P$ (up to subdivision of edges). Furthermore, they each preserve type I endpoints, so conditions (i) and (ii) of the definition of a well-distributed path $k$-system are inherited from the old system. Thus condition (iii) is the only one that needs to be checked. The operation $\join(u,v)$ can violate this condition only if $P$ and $P'$ each have a type I endpoint and $|\mathcal P_i| \geq 3$, or if the analogous situation holds for $\mathcal P_j$. The operation $\ins(u, \mathcal P_i, \mathcal P_r)$ can violate condition (iii) only if $\mathcal P_r$ has no type II endpoints; that is, if $\mathcal P_r = \{R\}$ for some path $R$ with two type I endpoints. We will ensure that these situations are avoided whenever applying $\join$ and $\ins$.
    
    We also note that $\join$ decreases the value of $|\mathcal P|$ by 2, while $\ins$ increases it by 1. Thus it suffices, for our inductive step, to apply $\join$ after at most one application of $\ins$.
    
    We assume without loss of generality that $|\mathcal P_1| = \max_i |\mathcal P_i|$, which implies $|\mathcal P_1| \geq 2$. We consider several cases.
    
    \begin{description}
        \item[Case $|\mathcal P_1| \geq 3$] Since $\mathcal P_1$ has at at most two type I endpoints, some $P \in \mathcal P_1$ has two type II endpoints $v$ and $v'$. Let $P'$ be another path in $\mathcal P_1$, which must have a type II endpoint $u$. Let $Q \in \mathcal P_i$ be the path in $\mathcal P$ meeting $P'$ at $u$, and let $R \in \mathcal P_j$ and $R' \in \mathcal P_{j'}$ be the respective paths in $\mathcal P$ meeting $P$ at $v$ and $v'$. Now if $j \neq i$, then we are free to use the operations $\ins(v, \mathcal P_j, \mathcal P_i)$ and $\join(u,v)$ in succession since $u$ is a type II vertex in $\mathcal P_i$ when we apply $\ins$; and $v'$ and the new subdividing vertex are both type II when we apply $\join$. We can thus assume that $j = i$, and by the same reasoning, that $j' = i$. Let $y$ be the other endpoint of $Q$, let $z$ be the other endpoint of $R$, and let $z'$ be the other endpoint of $R'$. Note that at most two of $y, z, z'$ can be type I. If not, then the paths $Q, R, R'$ would necessarily be distinct, and $y,z,z'$ would be distinct type I endpoints in $\mathcal P_i$, a contradiction.
        \begin{description}
            \item[Subcase $y$ type II] By relabeling vertices $v$ and $v'$ if necessary, we can assume that $Q \neq R$. Now we can apply the operation $\join(u,v)$ since $v'$ is type II in $\mathcal P_1$ and $y$ is type II in $\mathcal P_i$.
            \item[Subcase $y$ type I] By relabeling $v$ and $v'$ if necessary, we can assume that $z$ is type II. Now we can apply $\join(u,v)$ since $z$ is type II in $\mathcal P_i$ and $v'$ is type II in $\mathcal P_1$.
        \end{description}
         
        \item[Case $|\mathcal P_1| = 2$] Consider distinct $P, P' \in \mathcal P_1$. Let $u$ be a type II vertex in $P$, and let $v$ be a type II vertex in $P'$. Let $x$ be the other endpoint of $P$, and let $w$ be the other endpoint of $P'$. Let $Q, R \in \mathcal P$ be the other two paths with an endpoint at $u$ and $v$, respectively. Unless both $x$ and $w$ are type I, we can assume $Q \neq R$ by relabeling if necessary. For instance, if $x$ is type II and $Q=R$, then we could swap the roles of $x$ and $u$ and redefine $Q$ accordingly.
        \begin{description}
            \item[Subcase $Q \neq R$] If $Q$ and $R$ are in the same $\mathcal P_i$, then since $|\mathcal P_i| \leq |\mathcal P_1| = 2$, we can apply $\join(u,v)$ with no danger of violating condition (iii). If $
            Q \in \mathcal P_i$ and $R \in \mathcal P_j$ with $i \neq j$, then we can apply $\ins(v, \mathcal P_j, \mathcal P_i)$ and $\join(u,v)$ in succession.
            \item[Subcase $x, w$ type I, $Q = R$] Note that $\mathcal P_1$ has two type I endpoints, so by condition (ii), the collection $\mathcal P_i$ containing $Q$ has some type I endpoint. Both endpoints of $Q$, namely $u$ and $v$, are type II, so $\mathcal P_i$ has some other path $Q'$ with a type I endpoint $z$ and a type II endpoint $y$. Note that $y$ cannot be an endpoint of $\mathcal P_1$, so we can apply $\ins(y, \mathcal P_i, \mathcal P_1)$ to create a new type II endpoint $z'$ shared by $\mathcal P_1$ and $\mathcal P_i$, then apply $\join(u, z')$.
        \end{description}
    \end{description}
\end{proof}

\begin{proof}[Proof of Theorem \ref{thm:topological}]
    Let $k = \max\left\{\left\lceil \frac{\Delta(G)}{2} \right\rceil, \frac{v_{\odd}(G)}{2}\right\}$. We already know $p_{2, \topo}(G) \geq k$, so we must establish the other bound. If $k = 1$, then $\Delta(G) \leq 2$, so $G$ is a disjoint union of cycles and paths. Since $v_{\odd}(G) \leq 2$, $G$ has at most one component which is a path. Thus $G$ has no cycle components, by assumption. Clearly, $G$ has at least one edge, so $G$ is a path, and we have $p_2(G) = 1 = k$.
    
    Now suppose $k \geq 2$. We will produce a well-distributed path $k$-system $\{\mathcal P_1, \ldots, \mathcal P_k\}$ with $\bigoplus_{i = 1}^k \bigoplus_{P \in \mathcal P_i} P = E(H)$ for a subdivision $H$ of $G$, and the result will follow from Lemma \ref{lem:path system}. Subdivide each edge of $G$ into 3. Now $k$-color the edges in the following way. At each vertex $v$ of even degree at least 4, color the edges incident to $v$ so that each color is used 0 or 2 times. Order the odd-degree vertices. For $1 \leq i \leq k$, at the $i$th and $(k + i)$th odd-degree vertex (if they exist), color one incident edge with color $i$ and the remaining incident edges with the remaining colors so that each remaining color is used 0 or 2 times. Color the remaining edges so that each path between vertices of degree not 2 and each cyclic component uses at least two colors. The color classes now give a well-distributed path $k$-system.
\end{proof}

\subsection{Isolated vertices}
\label{subsec:loweriso}

In this section, we prove Theorem \ref{thm:p2vsla}, then we discuss its consequences for graphs which are not necessarily Eulerian. We then exhibit an example of a family of graphs $G$ for which adding an isolated vertex decreases the value of $p_2(G)$.

Recall from the introduction that $p_{2,\iso}(G)$ is the minimum value of $p_2(H)$ over all graphs $H$ that can be obtained from $G$ by adding isolated vertices.
Note that the established lower bounds $\la(G)$, $\frac{v_{\odd}(G)}2,$ and $\left \lceil \frac{\Delta(G)}{2}\right \rceil$ for $p_2(G)$ also hold for $p_{2,\iso}(G)$. We show that the linear arboricity bound is actually tight whenever $G$ is Eulerian.

\isolated*

\begin{proof}
    We will prove the second statement first.
    We have already observed that $\la(G) \leq p_2(G)$, so it suffices to show that $p_2(G) \leq 2$ whenever $\la(G) \leq 2$. In fact, since $G$ is Eulerian, we only need to consider the case when $\la(G) = 2$, say $E(G) = E(H_1) \cup E(H_2)$ for linear forests $H_1, H_2$.
    Let $\mathcal P_1 = \{P_1, \ldots, P_{s_1}\}$ and $\mathcal P_2 = \{Q_1, \ldots, Q_{s_2}\}$ be the respective collections of path components in $H_1$ and $H_2$. Since $G$ is Eulerian, every endpoint of a $P_i$ is the endpoint of some $Q_j$, so we have $s_1 = s_2 =: s$. Furthermore, all of the endpoints occur at vertices of degree 2 in $G$; that is, $\{\mathcal P_1, \mathcal P_2\}$ gives a path 2-system with no type I endpoints. Let $\mathcal P = \mathcal P_1 \cup \mathcal P_2$.
    
    The goal, similar to our proof of Lemma \ref{lem:path system}, is to replace the path 2-system $\{\mathcal P_1, \mathcal P_2\}$ with one that preserves $\bigoplus_{P \in \mathcal P} P = E(G)$, has no type I endpoints, and reduces the value of $|\mathcal P_1| = |\mathcal P_2|$ until we have $|\mathcal P_1| = |\mathcal P_2| = 1$.
    
    We can assume that $|\mathcal P_1| = |\mathcal P_2| \geq 2$. We pick any type II endpoint $v$ shared by the paths $P \in \mathcal P_1$ and $Q \in \mathcal P_2$. Let $u$ be the other endpoint of $P$, and let $w$ be the other endpoint of $Q$. Let $P'$ be a path in $\mathcal P_1$ distinct from $P$. Since $P, P'$ are vertex-disjoint and $P'$ has two endpoints, $P'$ has some endpoint $x$ distinct from $u,v,w$. Let $Q'$ be the path in $\mathcal P_2$ with an endpoint at $x$. The paths $P, P', Q, Q'$ are all distinct, so we can perform the operation $\join(v,x)$ and obtain the desired path 2-system.

    Now we prove the first statement. Suppose we are given a decomposition of $G$ into $k$ linear forests $H_1, \ldots, H_k$. This may not give a path $k$-system, as the path endpoints may occur at vertices of degree greater than 2. Thus we are not justified in using the $\join$ operation and must do something new. However, the fact that $G$ is Eulerian guarantees that every endpoint is shared by at least two linear forests, which is all we need if we allow for the addition of isolated vertices. Let $\mathcal P$ be the collection of all path components from the linear forests $H_1, \ldots, H_k$.

    Our new operation is applied to four distinct paths $P, P', Q, R \in \mathcal P$ with $P,Q$ sharing an endpoint $u$, with $P',R$ sharing an endpoint $v$, and with $P,P'$ being components of the same linear forest $H_i$. Let $H_{j_1}$ and $H_{j_2}$ be the respective linear forests with components $Q$ and $R$, which may or may not be distinct. We add an additional isolated vertex $w$, we add the edge $uw$ to $H_i$ and $H_{j_1}$, and we add the edge $vw$ to $H_i$ and $H_{j_2}$. The resulting graphs $H_i, H_{j_1}, H_{j_2}$ are still linear forests, and $\bigoplus_{j =1}^k E(H_j)$ is preserved, but now $H_i$ has one fewer component. We denote this operation by $\meet(u,v)$.

    If distinct paths $P_1, P_2$ belong to the same forest $H_i$, there exist (not necessarily distinct) paths $Q_1, R_1$ and $Q_2, R_2$ from other (not necessarily distinct) forests that meet $P_1$ and $P_2$ at their endpoints. Without loss of generality, $Q_1 \neq Q_2$. Let $u$ be the common endpoint of $P_1, Q_1$, and let $v$ be the common endpoint of $P_2, Q_2$. We can now apply $\meet(u,v)$. We iterate until all our linear forests are paths.
\end{proof}

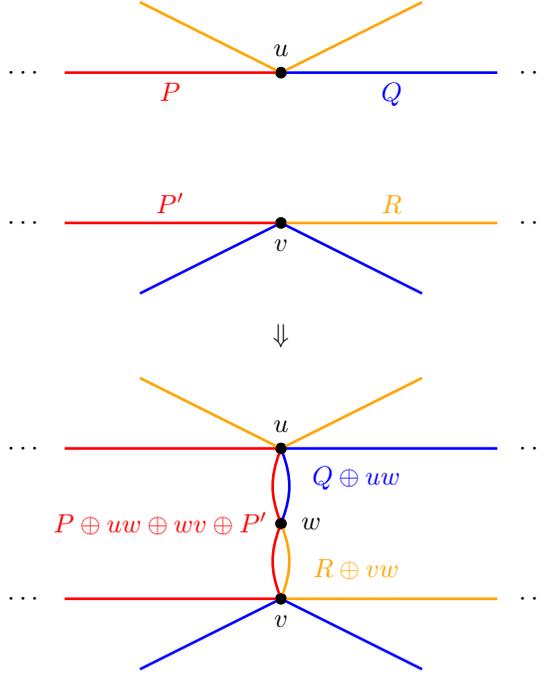
\begin{figure}[ht]
  \centering
  \begin{tikzpicture}[vert/.style={draw, fill=black, circle, inner sep=0pt, minimum size = 4pt, outer sep=0pt}, r_edge/.style={draw=red,line width= 1,>=latex,red}, b_edge/.style={draw=blue,line width= 1,>=latex,blue}, k_edge/.style={draw=black,line width= 1,>=latex,black}, y_edge/.style={draw=yellowish,line width= 1,>=latex,yellowish}, p_edge/.style={draw=orange,line width= 1,>=latex,orange}, label/.style = {draw=none, fill=none}]
    \node[vert] (u) at (0,2) {};
    \node (p1) at (-2,2) {};
    \node(p2) at (-3,2) {};
    \node (q1) at (2,2) {};
    \node(q2) at (3,2) {};
    \node (l1) at (-2,3) {};
    \node (r1) at (2,3) {};

    \node (c) at (0,1) {};

    \node[vert] (v) at (0,0) {};
    \node (pp1) at (-2,0) {};
    \node(pp2) at (-3,0) {};
    \node (qq1) at (2,0) {};
    \node(qq2) at (3,0) {};
    \node (ll1) at (-2,-1) {};
    \node (rr1) at (2,-1) {};
    
    \draw (u) edge [r_edge] node[label, below] {$P$} (p2);
    \draw[b_edge] (u) edge node[label, below] {$Q$} (q2);
    \draw[p_edge] (u) -- (l1);
    \draw[p_edge] (u) -- (r1);
    
    \draw (v) edge [r_edge] node[label, above] {$P'$} (pp2);
    \draw (v) edge [p_edge] node[label, above] {$R$} (qq2);
    \draw[b_edge] (v) -- (ll1);
    \draw[b_edge] (v) -- (rr1);

    \node[label] (shift) at (0,-5) {};

    \node[label] at (0,-1.5) {$\Downarrow$};
    
    \node[vert] (ju) at ($(u) + (shift)$) {};
    \node (jp1) at ($(p1) + (shift)$) {};
    \node(jp2) at ($(p2) + (shift)$) {};
    \node (jq1) at ($(q1) + (shift)$) {};
    \node(jq2) at ($(q2) + (shift)$) {};
    \node (jl1) at ($(l1) + (shift)$) {};
    \node (jr1) at ($(r1) + (shift)$) {};

    \node[vert] (jc) at ($(c) + (shift)$) {};

    \node[vert] (jv) at ($(v) + (shift)$) {};
    \node (jpp1) at ($(pp1) + (shift)$) {};
    \node(jpp2) at ($(pp2) + (shift)$) {};
    \node (jqq1) at ($(qq1) + (shift)$) {};
    \node(jqq2) at ($(qq2) + (shift)$) {};
    \node (jll1) at ($(ll1) + (shift)$) {};
    \node (jrr1) at ($(rr1) + (shift)$) {};
    
    \draw (ju) edge [r_edge] (jp2);
    \draw[b_edge] (ju) edge (jq2);
    \draw[p_edge] (ju) -- (jl1);
    \draw[p_edge] (ju) -- (jr1);
    
    \draw (jv) edge [r_edge] (jpp2);
    \draw[p_edge] (jv) -- (jqq2);
    \draw[b_edge] (jv) -- (jll1);
    \draw[b_edge] (jv) -- (jrr1);

    \draw (ju) edge[r_edge,bend right=20] (jc);
    \draw (jc) edge[r_edge,bend right=20] (jv);
    \draw (ju) edge[b_edge,bend left=20] (jc);
    \draw (jc) edge[p_edge,bend left=20] (jv);
    
    \node[label](hoffset) at (0.4,0) {};
    \node[label](voffset) at (0,-0.3) {};
    \node[label] at ($(u)-(voffset)$) {$u$};
    \node[label] at ($(v)+(voffset)$) {$v$};
    \node[label] at ($(p2)-(hoffset)$) {$\cdots$};
    \node[label] at ($(pp2)-(hoffset)$) {$\cdots$};
    \node[label] at ($(q2)+(hoffset)$) {$\cdots$};
    \node[label] at ($(qq2)+(hoffset)$) {$\cdots$};
    \node[label] at ($(ju)-(voffset)$) {$u$};
    \node[label] at ($(jv)+(voffset)$) {$v$};
    \node[label] at ($(jc)+(hoffset)$) {$w$};
    \node[label, red] at ($(jc)-4*(hoffset)$) {$P \oplus uw \oplus wv \oplus P'$};
    \node[label, blue] at ($(jc)+2.5*(hoffset)-2*(voffset)$) {$Q \oplus uw$};
    \node[label, orange] at ($(jc)+2.5*(hoffset)+2*(voffset)$) {$R \oplus vw$};
    \node[label] at ($(jp2)-(hoffset)$) {$\cdots$};
    \node[label] at ($(jpp2)-(hoffset)$) {$\cdots$};
    \node[label] at ($(jq2)+(hoffset)$) {$\cdots$};
    \node[label] at ($(jqq2)+(hoffset)$) {$\cdots$};
  \end{tikzpicture}
  \caption{The operation $\meet(u,v)$. \label{fig:meet operation}}
\end{figure}

For general graphs, we can obtain the following bound on $p_{2,\iso}(G)$.
\begin{restatable}{thm}{IsolatedGeneral}\label{thm:isolatedgeneral}
    Let $G$ be any graph, and let 
    \[t = \left\{\begin{array}{c l}
    2 & \text{if } v_\odd(G) = 4 \text{ and } \Delta(G) \geq 3, \\
    \left\lceil\frac{v_\odd(G)}{4}\right \rceil & \text{otherwise}
    \end{array}\right.\]
    and $d = 2\left \lceil \frac{\Delta(G)}{2} \right \rceil-2t$. Then
    \begin{align*}
        p_{2,\iso}(G) &\leq \left\{\begin{array}{l l}
        2t + \la(d) & \text{if } d \geq 0, \\
        \frac{v_{\odd}(G)}{2} & \text{if } d < 0,
        \end{array}\right.
    \end{align*}
    where $\la(d)$ denotes the maximum of $\la(H)$ over all graphs $H$ with maximum degree $d$.
\end{restatable}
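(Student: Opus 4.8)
The plan is to split $G$ into an Eulerian part of small maximum degree, handled by Theorem~\ref{thm:p2vsla}, together with a low-degree ``remainder'' that absorbs all of the odd-degree vertices and is handled by Theorem~\ref{thm:upperboundDeltaVSo2}. First I would dispose of the case $d<0$. Here $2\lceil \Delta(G)/2\rceil < 2t$, and since both $2\lceil \Delta(G)/2\rceil$ and $2t$ are even, so is their difference; thus $d$ is always even, and in this regime $2\lceil \Delta(G)/2\rceil \le 2t-2$. A short estimate, using $\lceil v_\odd(G)/4\rceil \le v_\odd(G)/4 + 3/4$, gives $2t-2 \le v_\odd(G)/2$, so $\max\{v_\odd(G)/2,\,2\lceil \Delta(G)/2\rceil\} = v_\odd(G)/2$ and the bound $p_{2,\iso}(G)\le p_2(G)\le v_\odd(G)/2$ follows directly from Theorem~\ref{thm:upperboundDeltaVSo2}.

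For the main case $d\ge 0$, the goal is to produce an Eulerian spanning subgraph $G^*\subseteq G$ with $\Delta(G^*)\le d$ whose complement $R$, the subgraph of $G$ with edge set $E(G)\setminus E(G^*)$, satisfies $p_{2,\iso}(R)\le 2t$. Granting this, a path odd-cover of $R$ (on $V(G)$ together with some isolated vertices) and one of $G^*$ combine into a path odd-cover of $G$, since $E(R)$ and $E(G^*)$ partition $E(G)$; hence
\[ p_{2,\iso}(G) \le p_{2,\iso}(R) + p_{2,\iso}(G^*) \le 2t + \la(G^*) \le 2t + \la(d), \]
where $p_{2,\iso}(G^*)=\la(G^*)$ by Theorem~\ref{thm:p2vsla} (as $G^*$ is Eulerian) and $\la(G^*)\le \la(d)$ since $\Delta(G^*)\le d$.

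To build $G^*$ and control $R$, I would choose $G^*$ so as to absorb as much degree as possible, up to $d$ and with even degrees, at every vertex where $\deg_G(v)>d$, while keeping $\deg_R(v)=\deg_G(v)-\deg_{G^*}(v)$ at the same parity as $\deg_G(v)$ everywhere (which is what makes $G^*$ Eulerian and forces $v_\odd(R)=v_\odd(G)$). Since $\deg_G(v)\le 2\lceil \Delta(G)/2\rceil = d+2t$, the residual degree is then at most $2t$ wherever the reduction succeeds; as $v_\odd(R)=v_\odd(G)\le 4t$ and $\Delta(R)\le 2t$, Theorem~\ref{thm:upperboundDeltaVSo2} gives $p_{2,\iso}(R)\le p_2(R)\le \max\{v_\odd(R)/2,\,2\lceil \Delta(R)/2\rceil\}\le 2t$. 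The existence of such a $G^*$ is a parity-constrained degree-bounded subgraph problem, which I would settle by an $f$-factor or flow argument.

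The hard part is exactly this factor step, because the reduction cannot be carried out vertex by vertex: $G^*$ must be a genuine Eulerian subgraph, and near tree-like structure — for instance a high-degree vertex all of whose neighbors are leaves — there is no nonempty Eulerian subgraph available to shave degree, so $\Delta(R)>2t$ is forced. I expect to treat such blocked configurations separately, verifying that there the target $2t+\la(d)$ already dominates the bound of Theorem~\ref{thm:upperboundDeltaVSo2}. The most delicate bookkeeping is when $v_\odd(G)=4$ and $\Delta(G)\ge 3$: the generic choice $t=\lceil v_\odd(G)/4\rceil = 1$ leaves no slack, since then $2t=v_\odd(G)/2$, so the two objects carrying the four odd vertices cannot simultaneously absorb a stray all-even piece produced by the degree reduction, and the construction can break down. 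Increasing to $t=2$ restores the slack, which is precisely the exceptional case recorded in the statement.
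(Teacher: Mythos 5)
Your handling of the case $d<0$ is correct and matches the paper's. The genuine gap is in the main case $d\ge 0$: the Eulerian spanning subgraph $G^*\subseteq G$ with $\Delta(G^*)\le d$ and $\Delta(R)\le 2t$ that your argument requires need not exist, and your proposed fallback for the blocked configurations is false. Take $G=K_{1,n}$ with $n\equiv 0\pmod 4$ and $n$ large: then $v_\odd(G)=n$ and $\Delta(G)=n$, so $t=n/4$ and $d=n/2$. Every edge of a star is a bridge, so the only Eulerian subgraph of $G$ is empty; you are forced to take $R=G$ with $\Delta(R)=n>2t$. Your fallback claim --- that in blocked configurations the target $2t+\la(d)$ dominates the bound of Theorem~\ref{thm:upperboundDeltaVSo2} --- also fails here: Theorem~\ref{thm:upperboundDeltaVSo2} applied to $G$ gives only $\max\left\{\frac n2,\,n\right\}=n$, while the bound to be proved is $2t+\la(d)=\frac n2+\la\!\left(\frac n2\right)\approx \frac{3n}{4}<n$. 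So for stars (and for trees of high maximum degree generally) your scheme proves nothing, and these are not discardable edge cases but instances the theorem must cover.

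The missing idea is that one should not insist on working inside $G$; the leverage of the odd-cover setting comes precisely from nonedges. The paper first chooses a perfect matching $M_\odd$ on the odd-degree vertices in the underlying complete graph (so possibly consisting of nonedges of $G$) and passes to $G'$ with $E(G')=E(G)\oplus M_\odd$, which is Eulerian with $\Delta(G')\le 2\left\lceil \frac{\Delta(G)}{2}\right\rceil=d+2t$; for the star this turns every leaf into a degree-$2$ vertex. An Eulerian graph can always be degree-split: repeated application of Lemma~\ref{lem:disjointcyclesmaxdegvertices} peels off sets of vertex-disjoint cycles covering all maximum-degree vertices, decomposing $E(G')$ into edge-disjoint Eulerian graphs $G''$ and $G'''$ with $\Delta(G'')\le 2t$ and $\Delta(G''')\le d$. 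Since $E(G)=\bigl(E(G'')\oplus M_\odd\bigr)\oplus E(G''')$, it remains to apply Lemma~\ref{lem:Eulerianplusmatching} --- the edge-integration machinery of Section~\ref{sec:twoedges}, which is also the true source of the exceptional value $t=2$ when $v_\odd(G)=4$ --- to odd-cover $E(G'')\oplus M_\odd$ with $2t$ paths, and Theorem~\ref{thm:p2vsla} to odd-cover $E(G''')$ with $\la(G''')\le\la(d)$ paths after adding isolated vertices. Your outer logic (split off an Eulerian piece of maximum degree $d$, handle it by Theorem~\ref{thm:p2vsla}, recombine) is sound, but without the symmetrization by $M_\odd$ the splitting step at its core cannot be carried out.
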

We defer the proof of this theorem to the end of Section \ref{sec:upperbounds}, as the tools we use are the same as the ones we use later on for upper bounds on $p_2(G)$. However, we note here an immediate consequence to put this theorem in the context of our other bounds.
\begin{cor}\label{cor:isolatedasymptotic}
    For any graph $G$,
    \[p_{2,\iso}(G) \leq \max\left\{\left(\frac 12 + o(1)\right) \left(\Delta(G) + \frac{v_\odd(G)}{2}\right), \frac{v_\odd(G)}{2}\right\}\]
    as $\Delta(G) \to \infty$.
\end{cor}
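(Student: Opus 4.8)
The plan is to derive the corollary directly from Theorem~\ref{thm:isolatedgeneral} by estimating each of its two cases asymptotically. Since we take $\Delta(G) \to \infty$, we may assume $\Delta(G) \geq 3$, so that $t \leq \frac{v_\odd(G)}{4} + 1$ holds under both clauses defining $t$: in the exceptional clause $v_\odd(G) = 4$ gives $t = 2 = \frac{v_\odd(G)}{4} + 1$, and otherwise $t = \left\lceil v_\odd(G)/4 \right\rceil \leq \frac{v_\odd(G)}{4} + 1$. The case $d < 0$ is immediate: here Theorem~\ref{thm:isolatedgeneral} gives $p_{2,\iso}(G) \leq \frac{v_\odd(G)}{2}$, which is at most the second entry of the maximum, so nothing further is needed.

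In the case $d \geq 0$, I would start from $p_{2,\iso}(G) \leq 2t + \la(d)$ and invoke Alon's asymptotic bound on the linear arboricity~\cite{alon1988} quoted in the introduction, which gives $\la(d) \leq \frac{d}{2} + f(d)$ for some $f(d) = o(d)$; replacing $f$ by $d \mapsto \sup_{d' \leq d} f(d')$, we may take $f$ nondecreasing. Using $\frac{d}{2} = \left\lceil \frac{\Delta(G)}{2} \right\rceil - t \leq \frac{\Delta(G) + 1}{2} - t$ and then $t \leq \frac{v_\odd(G)}{4} + 1$, the computation runs
\[
2t + \la(d) \leq t + \left\lceil \frac{\Delta(G)}{2} \right\rceil + f(d) \leq \frac{1}{2}\left(\Delta(G) + \frac{v_\odd(G)}{2}\right) + \frac{3}{2} + f(d).
\]

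Finally I would absorb the additive error into the claimed multiplicative factor. Since $0 \leq d \leq 2\left\lceil \Delta(G)/2 \right\rceil \leq \Delta(G) + 1$ and $f$ is nondecreasing with $f(x) = o(x)$, we have $\frac{3}{2} + f(d) \leq \frac{3}{2} + f(\Delta(G)+1) = o(\Delta(G))$. Writing this as $g(\Delta(G))\,\Delta(G)$ with $g(\Delta(G)) \to 0$, and using $\Delta(G) \leq \Delta(G) + \frac{v_\odd(G)}{2}$, we conclude
\[
2t + \la(d) \leq \left(\tfrac{1}{2} + g(\Delta(G))\right)\left(\Delta(G) + \frac{v_\odd(G)}{2}\right) = \left(\tfrac{1}{2} + o(1)\right)\left(\Delta(G) + \frac{v_\odd(G)}{2}\right),
\]
which is the first entry of the maximum. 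Combining the two cases yields the bound.

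The only delicate point is this last bookkeeping step: one must verify that the $o(d)$ term from the linear-arboricity estimate, together with the $O(1)$ slack coming from the ceilings and from the bound on $t$, is genuinely dominated by the multiplicative $o(1)$ factor. This is what forces the nondecreasing normalization of $f$ and the observation $d = O(\Delta(G))$, and it works precisely because $\Delta(G) + \frac{v_\odd(G)}{2} \geq \Delta(G) \to \infty$, so that any $o(\Delta(G))$ additive error is also $o(1)$ relative to $\Delta(G) + \frac{v_\odd(G)}{2}$.
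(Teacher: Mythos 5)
Your proof is correct and follows essentially the same route as the paper's: both deduce the corollary from Theorem~\ref{thm:isolatedgeneral} by splitting on the sign of $d$, bounding $2t \leq 2\left\lceil \frac{v_\odd(G)}{4}\right\rceil + 2$, and invoking an asymptotic linear-arboricity estimate of the form $\la(d) \leq \frac{d}{2} + o(d)$ (the paper cites the sharper $O(d^{2/3-\alpha})$ bound of~\cite{ferber2020towards}, you cite Alon's~\cite{alon1988}, but either suffices). Your monotone normalization of the error term and the explicit absorption of the $O(1)$ slack merely spell out bookkeeping that the paper leaves implicit.
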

\begin{proof}
    This comes from known bounds on linear arboricity from \cite{ferber2020towards}, namely
\[\la(d) = \frac{d}{2} + O(d^{2/3 - \alpha})\]
for some fixed $\alpha > 0$ as $d \to \infty$. If $p_{2,\iso}(G) < \frac{v_\odd(G)}{2}$, then in the notation of Theorem \ref{thm:isolatedgeneral}, we must have $d \geq 0$ and 
\begin{align*}
    p_{2,\iso}(G) &\leq 2t + \la(d) \leq 2\left \lceil \frac{v_\odd(G)}{4} \right \rceil + 2 + \frac{\Delta_e - 2\left \lceil\frac{v_\odd(G)}{4} \right \rceil}{2} + O(\Delta_e^{2/3 - \alpha}) \\
    &= \left(\frac{1}{2} + o(1)\right)\left(\Delta(G) + \frac{v_\odd(G)}{2}\right).
\end{align*}
\end{proof}
 
The proofs in this section rely heavily on the addition of isolated vertices in order to stitch together a linear forest decomposition into a path odd-cover. It is natural to wonder if the same thing can be done without these extra vertices, perhaps by other means; that is, can one obtain the same upper bounds for $p_2(G)$ that we obtained for $p_{2,\iso}(G)$? Unfortunately, this is not possible. We now show that adding isolated vertices can, in fact, reduce the path odd-cover number of a graph. The following proposition shows the separation of $p_2(G)$ and $p_{2,\iso}(G)$ in a family of Eulerian graphs.

\begin{prop}\label{prop:counteriso}
For every odd integer $k \geq 3$, there exists an Eulerian graph $G$ with $\la(G) = p_{2,\iso}(G) = k$ and $p_2(G) = k+1$.
\end{prop}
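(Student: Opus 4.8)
The plan is to establish the three equalities separately, using Theorem~\ref{thm:p2vsla} to dispose of two of them and reserving all the real work for the strict lower bound $p_2(G)\geq k+1$. First I would fix an explicit Eulerian graph $G=G_k$ with $\Delta(G)=2k$ and with a ``tight'' dense core: a subgraph $H\subseteq G$ (the natural candidate being $K_{2k}$, for which $e(H)=k(v(H)-1)$ so that $a(H)=k$) attached to the rest of $G$ by low-degree \emph{connector} gadgets, chosen so that (i) $H$ is strictly the densest subgraph, (ii) every vertex of $H$ has degree exactly $2k$ in $G$, and (iii) the connectors join the core in an odd-cyclic pattern. Once $G_k$ is in hand, the lower bound $p_2\geq k$ is immediate from $\lceil\Delta/2\rceil=k$ (Equation~\eqref{eq:vodd,maxdegree_lower}); showing $\la(G_k)=k$ is a direct construction of $k$ linear forests together with the density/degree lower bound; and then $p_{2,\iso}(G_k)=\la(G_k)=k$ follows \emph{for free} from Theorem~\ref{thm:p2vsla}, since $G_k$ is Eulerian. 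This already pins down everything except the strict inequality $p_2>k$.

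For the upper bound $p_2(G_k)\leq k+1$ I would start from the optimal decomposition into $k$ linear forests and merge them into paths exactly as in the proof of Theorem~\ref{thm:p2vsla}, using $\join$ operations wherever two forests share type~II endpoints. The point is that the proof of $p_{2,\iso}=\la$ needs the $\meet$ operation (hence a genuinely new isolated vertex) at precisely one step; here I would instead pay for that single unavoidable merge by spending one additional path, producing an honest path odd-cover of $G_k$ itself with $k+1$ paths. This is the routine direction and should require only bookkeeping.

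The main obstacle is the matching lower bound $p_2(G_k)\geq k+1$, and this is where the construction must be engineered carefully. Suppose for contradiction that a path odd-cover uses paths $P_1,\dots,P_k$. The key structural tool is a \emph{degree-forcing lemma}: at any vertex $v$ with $\deg_G(v)=2k$ one has $\sum_i\deg_{P_i}(v)=\sum_{w}m_{vw}\geq \deg_G(v)=2k$ (each of the $2k$ incident edges is covered an odd, hence positive, number of times), while $\sum_i\deg_{P_i}(v)\leq 2k$ since each path contributes at most $2$; therefore equality holds, $v$ is internal to \emph{every} $P_i$, every incident edge is covered exactly once, and no incident nonedge is used. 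Applying this to all of $V(H)$, and using that $H$ is strictly densest with $e(H)=k(v(H)-1)$, I would argue (via the bound that each $P_i$ meets $H$ in at most $v(H)-1$ edges) that the restrictions $P_i\cap H$ are forced to be $k$ edge-disjoint \emph{Hamiltonian paths} of $H$. Consequently the $k$ segment-endpoint pairs and the connector gadgets induce two perfect matchings on $V(H)$, whose union is a disjoint union of alternating cycles; tracing the $k$ paths through the connectors turns these alternating cycles into the actual paths $P_i$.

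The crux, and the place where oddness of $k$ must be used, is to show that this forced configuration cannot close up into $k$ genuine (open) paths: the odd-cyclic connector pattern forces the alternating ear/segment structure to contain an odd closed walk, so that at least one of the $P_i$ is compelled to become a cycle rather than a path (the ``same ear on both ends'' degeneracy I would need to rule out generically but which the odd arrangement makes unavoidable). I expect the hardest part to be \emph{excluding the use of nonedges entirely}: the degree-forcing lemma kills nonedges incident to $V(H)$, but one must still check that nonedges among the connector vertices cannot be exploited to repair the parity defect, and that no alternative (non-Hamiltonian) routing survives. If the odd-cycle parity argument can be shown to be immune to these nonedge repairs, the contradiction yields $p_2(G_k)\geq k+1$, completing the proof; choosing the connector gadget so that this immunity provably holds for every odd $k\geq 3$ is the delicate design decision on which the whole construction rests.
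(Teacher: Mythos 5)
Your proposal has the right overall architecture (explicit Eulerian construction, upper bounds via Theorem~\ref{thm:p2vsla}, a degree-forcing argument at maximum-degree vertices for the lower bound), and your degree-forcing lemma is correct: with only $k$ paths, every vertex of degree $2k$ must be internal to every path, every incident edge of $G$ is covered exactly once, and no nonedge at such a vertex can be used. However, there is a genuine gap: the two steps on which the whole proof rests are never carried out. First, the construction itself is left unspecified --- ``connector gadgets'' attached to $K_{2k}$ in an ``odd-cyclic pattern'' satisfying properties (i)--(iii) is a wish list, not a graph. Second, and more seriously, the lower bound $p_2(G_k)\geq k+1$ is exactly the part you defer: you acknowledge that you still need to rule out nonedges among the connector vertices and to verify that the forced Hamiltonian-path structure closes into an odd contradiction, and that whether this can be done ``is the delicate design decision on which the whole construction rests.'' That is the entire difficulty of the proposition, so as written this is a plan, not a proof. (The upper bound $p_2\leq k+1$ is also hand-waved: the $\join$/$\meet$ machinery of Theorem~\ref{thm:p2vsla} genuinely needs the added isolated vertex, and ``paying one extra path for the single unavoidable merge'' is not justified for an unspecified graph.)

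For contrast, the paper sidesteps the forced-structure analysis entirely by using \emph{two} dense cores rather than one. It takes $H$ to be $K_{2k+1}$ minus $k$ carefully chosen edges (one from each cycle of Walecki's Hamiltonian decomposition, chosen so $H$ has only $k-1$ odd-degree vertices), joins $H$ to a disjoint copy $H'$ by $k-1$ edges between corresponding odd-degree vertices, and reads off both upper bounds from the resulting explicit path/linear-forest decompositions. The lower bound is then a short global count: by degree-forcing, each of the $k$ hypothetical paths must visit degree-$2k$ vertices in both $H$ and $H'$, hence must cross between them; since $G$ has only $k-1$ crossing edges, parity forces either an edge covered at least $3$ times or a nonedge covered at least twice, so $\sum_i |E(R_i)| \geq |E(G)|+2 = 4k^2+k+1$, while $k$ paths on $4k+2$ vertices supply at most $k(4k+1)=4k^2+k$ edges --- a contradiction. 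This budget argument is immune to \emph{any} nonedge ``repairs,'' which is precisely the issue your single-core design cannot yet control; if you want to salvage your approach, replacing the single core by two sparsely joined cores so that a crossing/counting argument applies is the most direct fix.
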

\begin{proof}
We begin by considering Walecki's classical cycle decomposition of $K_{2k+1}$~\cite{lucas1883recreations}. We take $V(K_{2k+1}) = \{v_j : j \in \mathbb Z_{2k} \cup \{\infty\}\}$. For each $i \in \{0, \ldots, k-1\}$, the $i$th cycle $C_i$ consists of edges of the form $v_jv_{2i-j}$ for $j \in \mathbb Z_{2k} \setminus \{i, i +k\}$, $v_jv_{2i+1-j}$ for $j \in \mathbb Z_{2k}$, as well as $v_iv_\infty$, and $v_{i+k}v_\infty$.

For each $i \in \{0, \ldots, k-1\}$, consider the path $P_i = C_i \setminus e_i$, where $e_i = v_0v_{2i+1} \in E(C_i)$ for $i \in \{0, \ldots, k-2\}$, and $e_{k-1} = v_1v_{2k-2} \in E(C_{k-1})$.
Let $H$ be the subgraph of $K_{2k+1}$ obtained by deleting the edges $e_0, \ldots, e_{k-1}$; that is, $E(H) = \bigoplus_{i = 0}^{k-1} P_i$.
Now $H$ has $k-1$ vertices of degree $2k-1$, namely $3, 5, \ldots, 2k-3,$ and $2k-2$, while the remaining vertices of $H$ have even degree. Furthermore, for each $i \in \{0, \ldots, k-1\}$, the endpoints of $P_i$ are the endpoints of $e_i$, of which at most one has odd degree. In fact, both endpoints of $P_0$ have even degree, and for $i \neq 0$, $P_i$ has exactly one odd-degree endpoint, which we denote by $v_{j_i}$.

We make a disjoint copy $H'$ of $H$ with an identical path decomposition $E(H') = \bigoplus_{i = 0}^{k-1} P_i'$. We denote the vertex in $H'$ corresponding to $v_j$ in $H$ by $v_j'$ for each $j \in \mathbb Z_{2k} \cup \{\infty\}$. We obtain our graph $G$ by adding the edges $v_{j_i}v_{j_i}'$ to $H \cup H'$ for $i \in \{1, \ldots, k-1\}$, which makes $G$ Eulerian. For $i \in \{1, \ldots, k-1\}$, let $Q_i = P_i \cup v_{j_i}v_{j_i'} \cup P_i'$. Now $E(G) = P_0 \oplus P_0' \oplus \bigoplus_{i = 1}^{k-1} Q_i$ is a path odd-cover (a path decomposition, in fact) of $G$ with $k+1$ paths, and $E(G) = (P_0 \cup P_0') \cup \bigcup_{i = 1}^{k-1} Q_i$ is a decomposition of $E(G)$ into $k$ linear forests.
Since $G$ is Eulerian, we have by Theorem \ref{thm:p2vsla} that
\[k = \frac{\Delta(G)}{2} \leq p_{2,\iso}(G) = \la(G) \leq k.\]

Now suppose by way of contradiction that $p_2(G) = k$, and consider a path odd-cover $E(G) = \bigoplus_{i = 1}^k R_i$ of $G$. Since $H$ and $H'$ each have vertices of degree $2k$, every path $R_i$ contains at least one edge with one endpoint in $H$ and one endpoint in $H'$. There are only $k-1$ such edges in $G$, so our odd-cover must cover some edge of $G$ at least 3 times or some nonedge of $G$ at least twice. Either way, we have
\[\sum_{i = 1}^k |E(R_i)| \geq |E(G)| + 2 = 2\left(\binom{2k+1}{2}-k\right)+(k-1) + 2 = 4k^2+k+1.\]
On the other hand, $G$ has only $2(2k+1)=4k+2$ vertices, so each $R_i$ has at most $4k+1$ edges, giving
\[\sum_{i = 1}^k |E(R_i)| \leq k(4k+1) = 4k^2 + k,\]
a contradiction. Hence $p_2(G) = k+1$.
\end{proof}

We show an example of this construction, for the case $k=3$, in Figure~\ref{fig:counteriso}. One thing to note about this construction is that it gives examples of graphs $G$ for which each of the lower bounds for $p_2(G)$ at the bottom of the Hasse diagram in Figure~\ref{fig:bound diagram} is not tight; that is, $p_2(G)$ is not simply the maximum of $\frac{v_\odd(G)}{2}$, $\left \lceil \frac{\Delta(G)}{2} \right \rceil$, and $a(G)$. There are many other graphs with this property, such as a disjoint union of $K_4$ with $K_3$, but we have yet to find a graph for which $p_2(G)$ exceeds this maximum by more than one.

\begin{figure}[t]
\centering

\begin{tikzpicture}[vert/.style={draw, fill=black, circle, inner sep=0pt, minimum size = 4pt, outer sep=0pt}, r_edge/.style={draw=red,line width= 1,>=latex,red}, b_edge/.style={draw=blue,line width= 1,>=latex,blue}, k_edge/.style={draw=gold,line width= 1,>=latex,gold}, scale=.7]

\foreach \a in {0,...,5}{
 \node[vert] (A\a) at (210-60*\a:3) {};
    \draw (210-60*\a:3.5) node{\a};
}
 \node[vert] (Ainf) at (0.7,0) {};
    \draw (0.9,-.5) node{$\infty$};
 
  \node[vert] (iso) at (4,4.5) {};

  \node (xshift) at (8cm,0) {};

 \foreach \a in {0,...,5}{
  \node[vert] (B\a) at ($(xshift) - (210-60*\a:3)$) {};
    \draw ($(xshift) - (210-60*\a:3.5)$) node{\a};
}
  \node[vert] (Binf) at ($(xshift) - (0.7,0)$) {};
    \draw ($(xshift) - (0.9,.5)$) node{$\infty$};

\draw [r_edge] (A3) -- (B3);
\draw [k_edge] (A4) -- (B4);

\draw [b_edge] (A1) to[bend left=30] (iso);
\draw [k_edge] (A1) to[bend left=40] (iso); 
\draw [b_edge] (B0) to[bend right=30] (iso);
\draw [r_edge] (B0) to[bend right=40] (iso); 

\draw [b_edge] (Ainf) -- (A3);
\draw [b_edge] (A3) -- (A4);
\draw [b_edge] (A2) -- (A4);
\draw [b_edge] (A2) -- (A5);
\draw [b_edge] (A1) -- (A5);
\draw [b_edge] (Ainf) -- (A0);

\draw [r_edge] (Ainf) -- (A1);
\draw [r_edge] (A2) -- (A1);
\draw [r_edge] (A2) -- (A0);
\draw [r_edge] (Ainf) -- (A4);
\draw [r_edge] (A5) -- (A4);
\draw [r_edge] (A5) -- (A3);

\draw [k_edge] (A4) -- (A0);
\draw [k_edge] (A5) -- (A0);
\draw [k_edge] (A5) -- (Ainf);
\draw [k_edge] (A2) -- (Ainf);
\draw [k_edge] (A2) -- (A3);
\draw [k_edge] (A1) -- (A3);

\draw [b_edge] (Binf) -- (B3);
\draw [b_edge] (B3) -- (B4);
\draw [b_edge] (B2) -- (B4);
\draw [b_edge] (B2) -- (B5);
\draw [b_edge] (B1) -- (B5);
\draw [b_edge] (Binf) -- (B0);

\draw [r_edge] (Binf) -- (B1);
\draw [r_edge] (B2) -- (B1);
\draw [r_edge] (B2) -- (B0);
\draw [r_edge] (Binf) -- (B4);
\draw [r_edge] (B5) -- (B4);
\draw [r_edge] (B5) -- (B3);

\draw [k_edge] (B4) -- (B0);
\draw [k_edge] (B5) -- (B0);
\draw [k_edge] (B5) -- (Binf);
\draw [k_edge] (B2) -- (Binf);
\draw [k_edge] (B2) -- (B3);
\draw [k_edge] (B1) -- (B3);

\end{tikzpicture}
\caption{An example of the construction $G+K_1$ in Proposition~\ref{prop:counteriso} when $k=3$. Here $p_2(G)=4$ and $p_{2,\iso}(G)=p_2(G+K_1)=3$.}\label{fig:counteriso}
\end{figure}
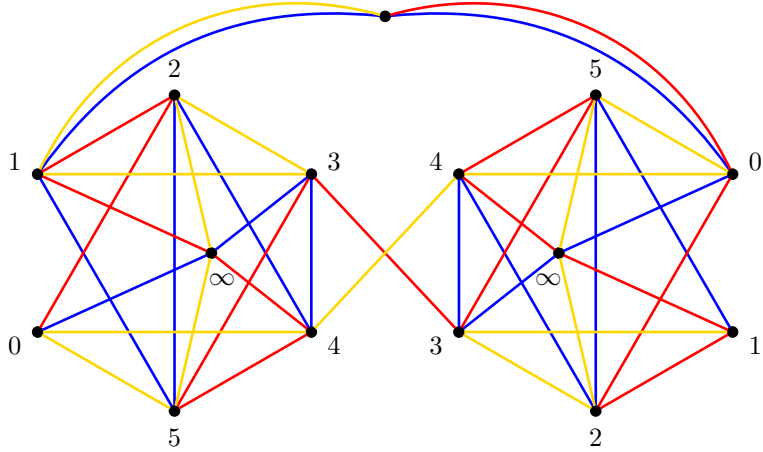

\section{Upper Bounds on $p_2(G)$}\label{sec:upperbounds}

In this section, we prove upper bounds on the path odd-cover number of a graph $G$ in terms of the maximum degree $\Delta$ (and $v_{\odd}$). Our discussion is presented in two stages. In Section \ref{sec:firstupperbound}, we devise a first upper bound through the adaptation of some tools from the literature.
The section is designed to lay out the general strategy of the proof.
First, the odd-degree vertices are connected through edge-disjoint paths,  whose deletion gives an Eulerian graph. Then it is possible to construct two paths whose symmetric difference gives a set of vertex-disjoint cycles that cover the vertices of maximal-degree. An iterative construction and deletion of such paths leads to a first bound of the form $\Delta+\frac{v_{\odd}}{2}$.

In Section \ref{sec:improvedupperbound}, we then refine the analysis by exhibiting that this construction works not only for a set of vertex-disjoint cycles, but also for the symmetric difference of such a set and two arbitrary edges. In doing so, we are able to perform the iterative construction and deletion routine immediately, without processing the odd-degree vertices separately first. This results in the bound in Theorem \ref{thm:upperboundDeltaVSo2} in which $v_{\odd}$ and $\Delta$ compete.

\subsection{A first upper bound in terms of $\Delta$}\label{sec:firstupperbound}

In this section, we prove the following first bound on the path odd-cover number.

\begin{thm}\label{thm:upperboundDelta+o2}
Let $G$ be a graph with maximum degree $\Delta$.
Then $E(G)$ can be odd-covered with at most $\Delta+\frac{v_{\odd}}{2}$ paths.
\end{thm}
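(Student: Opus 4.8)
The goal is to prove Theorem~\ref{thm:upperboundDelta+o2}: any graph $G$ with maximum degree $\Delta$ can be path odd-covered with at most $\Delta + \frac{v_{\odd}}{2}$ paths. The plan is to follow the two-stage strategy outlined in the introduction to Section~\ref{sec:firstupperbound}, namely to first handle the odd-degree vertices and then iteratively peel off degree-reducing pairs of paths from the resulting Eulerian graph.

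First I would reduce to the Eulerian case. Pair up the $v_{\odd}$ odd-degree vertices arbitrarily and connect each pair by a path; more efficiently, since a graph with $v_{\odd}$ odd-degree vertices has an Eulerian decomposition into $\frac{v_{\odd}}{2}$ edge-disjoint trails (the classical ``Listing/Euler'' observation), I would select $\frac{v_{\odd}}{2}$ edge-disjoint paths, one connecting each pair of odd-degree vertices, whose removal from $E(G)$ leaves a graph $G'$ in which every vertex has even degree, i.e.\ $G'$ is Eulerian. These $\frac{v_{\odd}}{2}$ paths are genuine paths in $G$ (hence in $K_n$) and they are edge-disjoint, so they each cover their edges exactly once; including them in the collection contributes $\frac{v_{\odd}}{2}$ to the count and reduces the problem to odd-covering the Eulerian graph $G'$. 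Note $\Delta(G') \le \Delta$.

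Second, I would odd-cover the Eulerian graph $G'$ using at most $\Delta$ paths by an iterative degree-reduction argument. The key structural claim to establish is: given an Eulerian graph $F$ with maximum degree $\delta \ge 1$, there exist two paths $R_1, R_2$ in $K_{V(F)}$ whose symmetric difference $R_1 \oplus R_2$ is a disjoint union of cycles covering \emph{all} vertices of maximum degree $\delta$ in $F$. Granting this, I set $F_0 = G'$, and at each step extract such a pair $R_1, R_2$ for the current $F$, add them to the collection, and replace $F$ by $F \oplus (R_1 \oplus R_2)$. Since $R_1 \oplus R_2$ is a union of cycles, it is an even subgraph, so $F$ remains Eulerian throughout; and since these cycles hit every maximum-degree vertex, each such vertex loses exactly $2$ from its degree, so the maximum degree drops by at least $2$. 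After at most $\lceil \delta/2\rceil$ iterations the graph is edgeless, using $2\lceil \Delta/2\rceil \le \Delta+1$ paths on the Eulerian part --- here I'd need to be slightly careful with parity to land at $\Delta$ rather than $\Delta+1$, likely by noting $\Delta(G')$ is even (as $G'$ is Eulerian) so $2\cdot\frac{\Delta(G')}{2} \le \Delta$ exactly.

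The main obstacle is constructing the two paths $R_1, R_2$ whose symmetric difference is a union of cycles covering the max-degree vertices. The natural approach: since $F$ is Eulerian, take an Euler tour of each component; along the tour, the vertices of maximum degree $\delta$ appear, and I want to select a spanning collection of vertex-disjoint cycles through exactly the max-degree vertices, realized as $R_1 \oplus R_2$ for two paths. One clean way is to build an auxiliary cycle (or cycles) visiting all the max-degree vertices in $K_n$, then split it into two arcs $R_1, R_2$; more generally, a disjoint union of cycles on a vertex set can always be written as the symmetric difference of two paths --- chain the cycles together with ``double edges'' (nonedges used twice, which cancel mod~$2$) into two paths, exactly the $\join$-type maneuver from the proof of Lemma~\ref{lem:path system}. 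So the real content is producing the vertex-disjoint cycle cover of the max-degree vertices inside the Eulerian graph $F$ (allowing nonedges), which I expect to handle by pairing up the max-degree vertices and routing short cycles through them. This is where I anticipate the bulk of the technical work, and where adapting the partition-polytope techniques from~\cite{b-13} (as the authors mention) would be the cleanest route.
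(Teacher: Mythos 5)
Your overall architecture matches the paper's proof: first delete $\frac{v_{\odd}}{2}$ edge-disjoint paths joining odd-degree vertices to leave an Eulerian graph $G'$ with $\Delta(G')\leq\Delta$, then repeatedly extract two paths whose symmetric difference is a vertex-disjoint union of cycles hitting every maximum-degree vertex, so that the (even) maximum degree drops by $2$ per round, giving $\Delta(G')+\frac{v_{\odd}}{2}\leq\Delta+\frac{v_{\odd}}{2}$ paths in total. Your chaining of disjoint cycles into two paths via doubled connector edges is exactly the paper's Lemma~\ref{lem:disjointcyclestwopaths}, and your parity bookkeeping is fine.

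However, there is a genuine gap in your key structural claim. For the degree-reduction step to work, the vertex-disjoint cycles $\mathcal{C}$ realized as $R_1\oplus R_2$ must be \emph{subgraphs} of the current Eulerian graph $F$, i.e.\ $E(\mathcal{C})\subseteq E(F)$: only then is $F\oplus E(\mathcal{C})=F\setminus E(\mathcal{C})$, losing exactly $2$ at each covered vertex and gaining nothing anywhere else. You state the claim without this requirement, and your proposed constructions violate it: an ``auxiliary cycle in $K_n$'' through the maximum-degree vertices, or short cycles routed through paired maximum-degree vertices ``allowing nonedges.'' If a cycle uses a nonedge of $F$ at a maximum-degree vertex $v$, then under the symmetric difference $\deg(v)$ stays the same or increases by $2$; and even if all cycles use $F$-edges at maximum-degree vertices, a pair of nonedges at a vertex of degree $\Delta(F)-2$ raises that vertex to degree $\Delta(F)$, so the maximum degree need not drop and the induction stalls. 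What is actually needed is precisely the paper's Lemma~\ref{lem:disjointcyclesmaxdegvertices}: every Eulerian graph contains a set of vertex-disjoint cycles, as subgraphs, covering all of its maximum-degree vertices. This is the technical core of the theorem --- the paper proves it by taking a balanced orientation of $F$ and applying Hall's theorem to an auxiliary bipartite graph whose perfect matchings encode exactly such cycle covers --- and your proposal defers it to ``adapting~\cite{b-13}'' without supplying the argument, while the constructions you do sketch would not produce cycles lying inside $F$ and hence would not reduce the maximum degree.
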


Our approach is inspired by a strategy in \cite{b-13,bv-19a} where different partitions of the same data set are transformed into one another. The difference of two such partitions can be represented as a labeled, directed multigraph (called a {\em difference graph}) where the vertices correspond to the partition parts and each labeled edge corresponds to a data item that has to be moved. In a graph where all vertices have a balance of in- and out-degree, the vertices of maximal in-degree can be covered by a set of vertex-disjoint directed cycles. The item movements corresponding to such a set of cycles can be performed through application of two ``cylical exchanges'' (which correspond to labeled cycles between vertices of the graph). This leads to an iterative scheme in which one gradually reduces the maximal indegree of the remaining difference graph, and a bound in terms of said in-degree.

We are able to transfer some of this strategy to our setting, but have to be careful in doing so: some of the known concepts only hold specifically for bipartite graphs, labeled edges, directed networks, the existence of multiedges, and most importantly the use of cycles instead of paths.
Because of these challenges, the remainder of this section is dedicated to a self-contained explanation of the necessary tools tailored to our problem, culminating in the proof of Theorem \ref{thm:upperboundDelta+o2}. In Section \ref{sec:twoedges}, we will then be able to dramatically improve to the bound in Theorem \ref{thm:upperboundDeltaVSo2} -- where $\Delta$ and $\frac{v_{\odd}}{2}$ are not added but instead ``compete'' for the larger value -- through tools designed to better and more directly exploit structural results specific to the odd-cover setting.

We begin by proving that a set of vertex-disjoint cycles can be odd-covered by two paths.
If $\mathcal{C}=\{C_1,\dots,C_k\}$ is a set of vertex-disjoint cycles, then we also refer to the graph $\bigcup_{i=1}^k C_i$ as $\mathcal{C}$. In particular, we write $V(\mathcal{C})$ and $E(\mathcal{C})$ to denote $\bigcup_{i=1}^k V(C_i)$ and $ \bigcup_{i=1}^k E(C_i)$ respectively.

\begin{lemma} \label{lem:disjointcyclestwopaths}
Let $\mathcal{C}=\{C_1,\dots,C_k\}$ be a set of vertex-disjoint cycles.
Then $\mathcal{C}$ can be odd-covered using two paths $P$ and $Q$ with $V(P)\cup V(Q)\subseteq V(\mathcal{C})$.
\end{lemma}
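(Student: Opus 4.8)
The plan is to give an explicit construction of the two paths by \emph{stitching} together spanning paths of the individual cycles, using connecting edges between consecutive cycles that will appear in \emph{both} $P$ and $Q$ and hence cancel in the symmetric difference. First I would fix, for each cycle $C_i$, a cyclic ordering of its vertices $v_{i,1}, v_{i,2}, \dots, v_{i,m_i}$ (recall $m_i \geq 3$), and distinguish the spanning path $H_i = v_{i,1}v_{i,2}\cdots v_{i,m_i}$, which uses every edge of $C_i$ except the single \emph{chord} $f_i = v_{i,1}v_{i,m_i}$.

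Next I would define the two paths. Let $P$ be the path
\[ P = v_{1,1}\cdots v_{1,m_1}\, v_{2,1}\cdots v_{2,m_2}\, \cdots\, v_{k,1}\cdots v_{k,m_k}, \]
obtained by concatenating the spanning paths $H_1, \dots, H_k$ using the connecting edges $g_i = v_{i,m_i}v_{i+1,1}$ for $1 \leq i \leq k-1$. Let $Q$ be the path
\[ Q = v_{1,1}\, v_{1,m_1}\, v_{2,1}\, v_{2,m_2}\, \cdots\, v_{k,1}\, v_{k,m_k}, \]
which alternates between the chords $f_i$ and the \emph{same} connecting edges $g_i$. Because the cycles are vertex-disjoint, all listed vertices are distinct, so both $P$ and $Q$ are genuine simple paths, and $V(P) \cup V(Q) \subseteq V(\mathcal{C})$ (in fact $V(P) = V(\mathcal{C})$).

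It then remains to verify that $P \oplus Q = E(\mathcal{C})$. The edges of $P$ fall into two types: the spanning-path edges, which together form $\bigcup_i \left(E(C_i)\setminus\{f_i\}\right)$, and the connecting edges $g_1, \dots, g_{k-1}$. The edges of $Q$ are the chords $f_1, \dots, f_k$ together with the same connecting edges $g_1, \dots, g_{k-1}$. Hence each $g_i$ lies in both $P$ and $Q$ and cancels, while each edge of $\mathcal{C}$ — whether a spanning-path edge (appearing only in $P$) or a chord (appearing only in $Q$) — appears exactly once. Therefore $P \oplus Q = \bigcup_i E(C_i) = E(\mathcal{C})$, as desired.

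I do not expect a serious obstacle here, as the argument is entirely constructive. The only points requiring care are confirming that $P$ and $Q$ are simple paths (which follows from vertex-disjointness of the cycles, noting that $Q$ visits exactly the two endpoints $v_{i,1}, v_{i,m_i}$ of each spanning path) and handling the base case $k=1$, where $P = H_1$ and $Q$ is the single chord edge $f_1$. One should also observe that the connecting edges $g_i$ are typically \emph{nonedges} of $\mathcal{C}$; this causes no difficulty, since an odd-cover permits nonedges provided they are covered an even number of times, and here each $g_i$ is covered exactly twice.
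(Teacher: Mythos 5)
Your proof is correct and follows essentially the same approach as the paper: the paper splits each cycle $C_i$ into two paths $P_i, Q_i$ between two chosen vertices and concatenates each family with shared connecting edges that cancel in the symmetric difference, and your construction is exactly the special case where each $Q_i$ is a single edge (which is precisely the instance the paper depicts in its figure). The only cosmetic quibble is that your $f_i$ is an edge of $C_i$, not a chord, but this does not affect the argument.
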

\begin{proof}
For each $i\in[k]$, arbitrarily choose two distinct vertices $x_i,y_i \in V(C_i)$, and let $P_i,Q_i$ denote the two paths in $C_i$ from $x_i$ to $y_i$.
For each $i\in[k-1]$, let $e_i$ denote the edge $y_ix_{i+1}$.
Then the two paths 
\begin{align*}
    &P_1\cup e_1\cup P_2\cup e_2\cup\dots \cup P_{k-1}\cup e_{k-1}\cup P_k \text{ and} \\
    &
    Q_1\cup e_1\cup Q_2\cup e_2\cup\dots \cup Q_{k-1}\cup e_{k-1}\cup Q_k
\end{align*}
odd-cover $\mathcal{C}$. Figure \ref{fig:cyclecover} in the introduction depicts an example of this construction, where the paths $Q_i$ are all chosen as single-edge paths.
\end{proof}

This observation is intimately connected to some well-known constructions. Through a connection of $P_1$ to $P_k$ and $Q_1$ to $Q_k$ through the same edge $y_kx_1$, one obtains an odd-cover by a set of two cycles. This idea was the key ingredient for the famous result that Birkhoff polytopes have diameter $2$ \cite{br-74}. Equivalently, the concept can be represented as any permutation being the product of two indecomposable permutations.

Next, we show that, in an Eulerian graph $G$, the set of vertices of maximum degree can be covered by a set of vertex-disjoint cycles. This is known to hold for directed graphs that decompose into cycles \cite{b-13}; through a greedy orientation of the undirected cycles in $G$, it would be possible to transfer to our setting. We provide an elementary, alternative proof based on Hall's Theorem instead of network flows. Specifically, we use a correspondence between sets of vertex-disjoint cycles in a directed graph and perfect matchings in a certain associated bipartite graph. A similar correspondence appears for example in \cite{guenin2011packing}.

\begin{lemma} \label{lem:disjointcyclesmaxdegvertices}
Let $G$ be an Eulerian graph. Then there is a set of vertex-disjoint cycles of $G$ containing every vertex of maximum degree in $G$.
\end{lemma}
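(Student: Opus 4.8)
The plan is to reduce the statement to the existence of a perfect matching in an auxiliary bipartite graph, following the cycles-to-matchings correspondence alluded to above. Since $G$ is Eulerian, every vertex has even degree, so $G$ admits a balanced orientation $\vec G$ in which $d^+(v)=d^-(v)=d(v)/2$ for every vertex $v$ (orient an Eulerian circuit in each component, or orient a cycle decomposition). Write $D$ for the set of vertices of maximum degree $\Delta$; note $\Delta$ is even. A set of vertex-disjoint directed cycles in $\vec G$ corresponds exactly to a set of vertex-disjoint (undirected) cycles in $G$, so it suffices to find vertex-disjoint directed cycles covering $D$.

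First I would build the bipartite graph $B$ with parts $X=\{x_v:v\in V(G)\}$ and $Y=\{y_v:v\in V(G)\}$, placing an edge $x_uy_v$ for each arc $u\to v$ of $\vec G$ (a ``real'' edge) and, crucially, an extra ``dummy'' edge $x_vy_v$ for each vertex $v\notin D$. The point of the dummy edges is that max-degree vertices are forced to be covered while all other vertices may be skipped. Given a perfect matching $M$ of $B$, let the selected arcs be the real edges of $M$. A short case check shows that at each vertex $v$ the selected arcs satisfy $d^+=d^-\in\{0,1\}$: if $x_v$ is matched by a real edge then the dummy edge at $v$ is unused, forcing $y_v$ to be matched by a real edge as well (and conversely), while if $v\in D$ there is no dummy edge, so both $x_v$ and $y_v$ are matched by real edges. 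Hence the selected arcs decompose into vertex-disjoint directed cycles, and every $v\in D$ lies on one of them. (In a simple graph these directed cycles have length at least $3$, so they are genuine cycles of $G$.) Thus the lemma follows once $B$ is shown to have a perfect matching.

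The hard part is verifying Hall's condition for $B$. Since $|X|=|Y|$, it is enough to show $|N_B(S)|\ge|S|$ for every $S\subseteq X$. Fixing $S$, I would set $A=\{v:x_v\in S\}$, $A_1=A\cap D$, $A_2=A\setminus D$, and let $N^+(A)$ be the set of out-neighbours of $A$ in $\vec G$; then $N_B(S)$ consists exactly of the copies $y_w$ with $w\in N^+(A)\cup A_2$, so $|N_B(S)|=|W|+|A_2|$, where $W=N^+(A)\setminus A_2$. The goal becomes $|W|\ge|A_1|$. The clean way to get this is a double count of the arcs leaving $A$: their total number is $|A_1|\Delta/2+\sum_{v\in A_2}d(v)/2$, while the arcs of this set whose head lies in $A_2$ number at most $\sum_{w\in A_2}d^-(w)=\sum_{w\in A_2}d(w)/2$, and those whose head lies in $W$ number at most $|W|\Delta/2$ (using $d^-(w)\le\Delta/2$). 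Comparing the two sides, the sums over $A_2$ cancel and leave $|A_1|\Delta/2\le|W|\Delta/2$, i.e. $|W|\ge|A_1|$, as needed.

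I expect this cancellation to be the crux: it is what makes the bound tight enough to certify a perfect matching rather than merely a large one, and it is exactly where the balanced orientation (equal in- and out-degrees at every vertex) is used. Finally I would dispose of trivial degeneracies, namely an edgeless $G$ or isolated vertices, which are never of maximum degree unless $\Delta=0$, separately.
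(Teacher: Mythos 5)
Your proposal is correct and follows essentially the same route as the paper: a balanced orientation of $G$, the same auxiliary bipartite graph with ``dummy'' edges $x_vy_v$ only at non-maximum-degree vertices, and a perfect matching obtained via Hall's theorem. Your verification of Hall's condition by double-counting the arcs leaving $A$ is a streamlined rephrasing of the paper's double count of the edges of the induced subgraph $B' = B[U^{out}\cup N_B(U^{out})]$; in both arguments the contributions of the non-maximum-degree vertices cancel and the bound $d^-(w)\le \Delta/2$ closes the estimate.
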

\begin{proof}
Since $G$ is Eulerian, there is a balanced orientation $D=(V,A)$ of $G$ with indegree being equal to outdegree at each vertex.
Let us construct an undirected bipartite graph $B$ associated with $D$ as follows.
For each $v\in V(D)$, we have two vertices $v^{out},v^{in}$ in $B$, and for each arc $uv \in A(D)$, we add the edge $u^{out}v^{in}$ to $B$.
Additionally, we add the edge $v^{out}v^{in}$ to $B$ for every vertex $v\in V(G)$ that is not a maximum degree vertex. Figure \ref{fig:bipartitematching} depicts an example of this construction. It is easy to see that there is a bijection between perfect matchings of $B$ and sets of vertex-disjoint cycles of $D$ containing every vertex of maximum degree in $G$.

\begin{figure}[t]
\centering
\begin{tikzpicture}[vertices/.style={draw, fill=black, circle, inner sep=0pt, minimum size = 4pt, outer sep=0pt}, scale=1.2]
\path[use as bounding box] (-1,-2) rectangle (9,5);
\node[vertices,label={[xshift=-0.2cm, yshift=-0.1cm]1}] (w_1) at (0,0) {};
\node[vertices,label={[xshift=-0.2cm, yshift=-0.1cm]2}] (w_2) at (0,1.5) {};
\node[vertices,label={[xshift=0.22cm, yshift=-0.5cm]4}] (w_3) at (1.5, 1.5) {};
\node[vertices,label={[xshift=0.25cm, yshift=-0.35cm]3}] (w_4) at (1.5, 0) {};
\node[vertices,label={[xshift=-0.2cm, yshift=-0.2cm]5}] (w_6) at (1.5, 3) {};
\node[vertices,label={[xshift=0.2cm, yshift=-0.1cm]6}] (w_7) at (3, 1.5) {};
\node[vertices,label={[xshift=0.2cm, yshift=-0.1cm]7}] (w_8) at (3, 3) {};

\foreach \to/\from in {w_1/w_2,w_2/w_6, w_6/w_7,
w_7/w_3, w_3/w_4, w_4/w_1}
\draw[draw=red, -stealth, line width= 1,>=latex]  (\to)--(\from);

\foreach \to/\from in {w_3/w_2,  
w_7/w_8,w_8/w_6,w_6/w_3,
w_2/w_4,w_4/w_7}
\draw[draw=black, -stealth, line width= 1,>=latex]  (\to)--(\from);

\node[vertices,label={[xshift=-0.4cm, yshift=-0.25cm]1$^{out}$}] (c1) at (6.0,-1.5) {};
\node[vertices,label={[xshift=-0.4cm, yshift=-0.25cm]2$^{out}$}] (c2) at (6.0,-0.5) {};
\node[vertices,label={[xshift=-0.4cm, yshift=-0.25cm]3$^{out}$}] (c3) at (6.0,0.5) {};
\node[vertices,label={[xshift=-0.4cm, yshift=-0.25cm]4$^{out}$}] (c4) at (6.0,1.5) {};
\node[vertices,label={[xshift=-0.4cm, yshift=-0.25cm]5$^{out}$}] (c5) at (6.0,2.5) {};
\node[vertices,label={[xshift=-0.4cm, yshift=-0.25cm]6$^{out}$}] (c6) at (6.0,3.5) {};
\node[vertices,label={[xshift=-0.4cm, yshift=-0.25cm]7$^{out}$}] (c7) at (6.0,4.5) {};

\node[vertices,label={[xshift=0.4cm, yshift=-0.25cm]1$^{in}$}] (d1) at (8.0,-1.5) {};
\node[vertices,label={[xshift=0.4cm, yshift=-0.25cm]2$^{in}$}] (d2) at (8.0,-0.5) {};
\node[vertices,label={[xshift=0.4cm, yshift=-0.25cm]3$^{in}$}] (d3) at (8.0,0.5) {};
\node[vertices,label={[xshift=0.4cm, yshift=-0.25cm]4$^{in}$}] (d4) at (8.0,1.5) {};
\node[vertices,label={[xshift=0.4cm, yshift=-0.25cm]5$^{in}$}] (d5) at (8.0,2.5) {};
\node[vertices,label={[xshift=0.4cm, yshift=-0.25cm]6$^{in}$}] (d6) at (8.0,3.5) {};
\node[vertices,label={[xshift=0.4cm, yshift=-0.25cm]7$^{in}$}] (d7) at (8.0,4.5) {};

\foreach \to/\from in {
c1/d2,c2/d5,
c5/d6,c6/d4,
c4/d3,c3/d1,c7/d7}
\draw[draw=red, line width=1,>=latex]  (\to)--(\from);

\foreach \to/\from in {
c1/d1,
c2/d3,c4/d2,
c3/d6,c5/d4,c7/d5,c6/d7}
\draw[draw=black, line width= 1,>=latex]  (\to)--(\from);

\end{tikzpicture}

\caption{A bipartite graph $B$ (right) constructed from an Eulerian directed graph $G$ (left). A perfect matching of $B$ corresponds to a set of vertex-disjoint cycles in $G$ that contain every vertex of maximum degree. Here, one cycle covers the vertices of maximum degree $2-6$. }\label{fig:bipartitematching}
\end{figure}
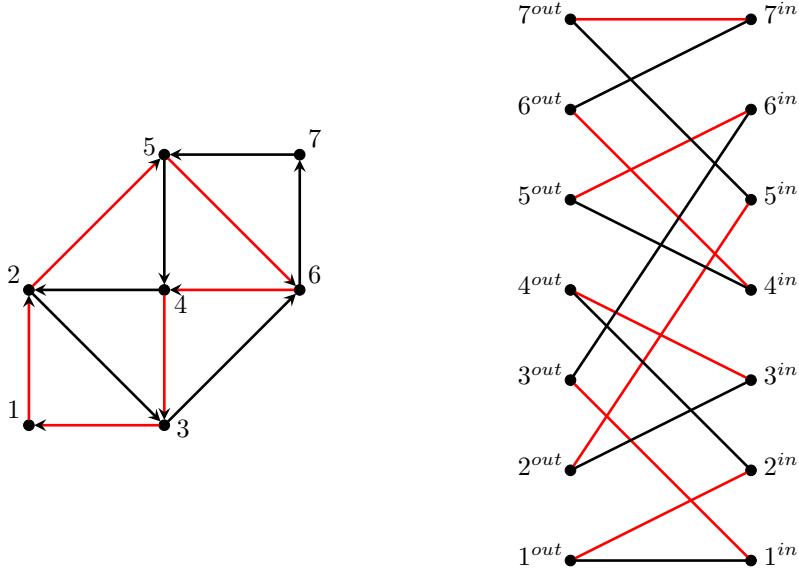

Let $V^{out} = \{v^{out}:v\in V(D)\}$ and $V^{in}=\{v^{in}:v\in V(D)\}$, so that $V(B) = V^{out}\cup V^{in}$.
For a vertex $u$ or a vertex set $U$ of $G$, we write $u^{out}$ or $U^{out}$ (respectively, $u^{in}$ or $U^{in}$) to denote the corresponding vertex or vertex set in $V^{out}$ (respectively, $V^{in})$, and vice versa.
We show that $B$ has a perfect matching by verifying Hall's condition; that is, we show that for all $U^{out}\subseteq V^{out}$, we have $|U^{out}|\leq |N_B(U^{out})|$.

Let $U^{out} \subseteq V^{out}$ and write $U^{out}=\{u_1^{out},u_2^{out},\dots,u_n^{out}\}$. We may assume without loss of generality that $\deg(u_1)\leq \dots \leq \deg(u_n)$.
Then there is a positive integer $k\leq n$ such that $u_i$ has maximum degree in $G$ if and only if $k<i\leq n$.
Consider the induced bipartite subgraph $B' = B[U^{out} \cup N_B(U^{out})]$.
We will count the edges of $B'$ from the two sides.
Counting on the side of $U^{out}$, we have
\begin{align*}
    |E(B')| &= \sum_{i=1}^n \deg_B(u_i^{out}) \\
    &= \sum_{i=1}^k \deg_B(u_i^{out}) + \sum_{i=k+1}^n \frac{\Delta}{2}.
\end{align*}
On the other hand, counting on the side of $N_B(U^{out})$, we have
\begin{align*}
    |E(B')| &= \sum_{i=1}^k \deg_{B'}(u_i^{in}) + \sum_{v \in N_B(U^{out})\setminus \{u_1^{in},\dots,u_k^{in}\}} \deg_{B'}(v).
\end{align*}
Since $\deg_{B'}(u_i^{in}) \leq \deg_B(u_i^{in})= \deg_B(u_i^{out})$ for all $i\in[k]$, we have
\begin{align} \label{eqn:sumdegB'(v)}
    \sum_{i=k+1}^n \frac{\Delta}{2} 
    &= \left(\sum_{i=1}^k (\deg_{B'}(u_i^{in}) - \deg_B(u_i^{out}))\right) +\sum_{v \in N_B(U^{out})\setminus \{u_1^{in},\dots,u_k^{in}\}} \deg_{B'}(v) \nonumber \\
    &\leq \sum_{v \in N_B(U^{out})\setminus \{u_1^{in},\dots,u_k^{in}\}} \deg_{B'}(v),
\end{align}
and since $\deg_{B'}(v)\leq \frac{\Delta}{2}$ for all $v\in V^{in}$, it follows that there are at least $(n-k)$ terms in the sum on the right hand side of \eqref{eqn:sumdegB'(v)}. In other words, we have $|N_B(U^{out}) \setminus \{u_1^{in},\dots,u_k^{in}\}| \geq n-k$, hence $|N_B(U^{out})|\geq |U^{out}|$ as desired.
\end{proof}

We are now ready to complete the proof of Theorem \ref{thm:upperboundDelta+o2}.

\begin{proof}[Proof of Theorem \ref{thm:upperboundDelta+o2}]
Let us first show that if $G$ is Eulerian and has maximum degree $\Delta$, then $G$ can be odd-covered using at most $\Delta$ paths.
We prove this by induction on $\Delta$. The base case $\Delta=2$ is given by Lemma \ref{lem:disjointcyclestwopaths}. 
Now suppose $\Delta\geq 4$. By Lemma \ref{lem:disjointcyclesmaxdegvertices}, there is a set $\mathcal{C}$ of vertex-disjoint cycles containing every maximum-degree vertex of $G$. Note that $E(\mathcal{C})$ can be odd-covered using 2 paths by Lemma \ref{lem:disjointcyclestwopaths}.
Now $G\setminus E(\mathcal{C})$ is clearly Eulerian, and moreover it has maximum degree $\Delta-2$ since every vertex of maximum degree in $G$ is incident with two edges in $E(\mathcal{C})$.  
By the inductive hypothesis, $G\setminus E(\mathcal{C})$ can be odd-covered using at most $\Delta-2$ paths, and it follows that $G$ can be odd-covered with at most $\Delta$ paths.

For the general case, note that there is a set of $\frac{v_{\odd}}{2}$ edge-disjoint paths in $G$ whose deletion leaves an Eulerian graph (this can be seen by iteratively deleting a path joining odd-degree vertices).
The remaining Eulerian graph can be odd-covered using at most $\Delta$ paths.
\end{proof}

\subsection{An improved bound where $\Delta(G)$ and $v_{\odd}$ compete}\label{sec:improvedupperbound}

Theorem \ref{thm:upperboundDelta+o2} establishes a first bound on the path odd-cover number in the form $\Delta+\frac{v_{\odd}}{2}$. We now refine our analysis to make the terms $\Delta$ and $\frac{v_{\odd}}{2}$ ``compete'' with each other. The result is the bound in Theorem~\ref{thm:upperboundDeltaVSo2}, restated below.

\DeltaVSodd*

This bound represents a dramatic improvement over Theorem \ref{thm:upperboundDelta+o2}. The main idea is the ``addition'' of two edges (incident to odd-degree vertices) to the set of vertex-disjoint cycles to be odd-covered by two paths in each iteration; {\em i.e.}, these edges are odd-covered along with the cycles.

Let us begin working towards a proof of Theorem \ref{thm:upperboundDeltaVSo2}. First, we transform $G=(V,E)$ into an Eulerian graph. Recall that the ability to cover the maximum-degree vertices by vertex-disjoint cycles is contingent on $G$ being Eulerian; see the proof of Lemma \ref{lem:disjointcyclesmaxdegvertices}.
To this end, we choose a perfect matching $M_{\odd}$ (in the underlying complete graph) between the odd-degree vertices of $G$ and replace the edge set $E$ by taking the symmetric difference with $M_{\odd}$, {\em i.e.}, $E'= E{\oplus}M_{\odd}$, so that  $G'=(V,E')$ is a simple Eulerian graph. The information on the edge set $M_{\odd}$ is stored separately.
Note that the transformation from $G$ to $G'$ increases the maximum degree from $\Delta$ to $\Delta_e:=2\left\lceil \frac{\Delta}{2} \right\rceil$ if $\Delta$ is odd and $M_{\odd}$ contains an edge incident to a maximum-degree vertex that is not in $E$.

The strategy now becomes to find a path odd-cover of $E = E'\oplus M_{\odd}$ by first applying Lemma \ref{lem:disjointcyclesmaxdegvertices} to $G'$ to obtain a set of vertex-disjoint cycles covering the maximum degree vertices of $G'$, then finding two paths that odd-cover the symmetric difference of this set of vertex-disjoint cycles and up to two additional edges from $M_\odd$.
We will refer to this as {\em integration} of edges in such a construction.  For readability and accessibility, we perform this discussion in multiple steps, gradually working from the easiest constructions towards the more involved, technical ones.

\subsubsection{Integration of one edge}\label{sec:oneedge}

We begin with the easier setting of adding one edge that is not in the edge set of the cycles. 
That is, given a set $\mathcal{C}=\{C_1,\dots,C_k\}$ of vertex-disjoint cycles and an edge $f\not\in E(\mathcal{C})$, we show that $\{f\}\cup E(\mathcal{C})$ can be odd-covered by two paths.
We will actually prove a slightly stronger statement. Note that the two endpoints of $f$ are the only vertices of odd degree in the edge set $\{f\}\cup E(\mathcal{C})$. 
Thus, if $P,Q$ are two paths odd-covering $\{f\}\cup E(\mathcal{C})$, then the endpoints of $f$ are both an endpoint of either $P$ or $Q$, and moreover $P$ and $Q$ share a common endpoint $z\in V(\mathcal{C})\setminus V(f)$. We show, in fact, that we can {\em choose} this common endpoint $z$ of the two paths odd-covering $\{f\}\cup E(\mathcal{C})$, with one exceptional case.

\begin{lemma}\label{lem:disjointcyclesplusedgeandcommonendpointtwopaths}
Let $\mathcal{C}=\{C_1,\dots,C_k\}$ be a set of vertex-disjoint cycles and let $f=uv$ be an edge not in $E(\mathcal{C})$.
Then the edge set $\{f\}\cup  E(\mathcal{C})$ can be odd-covered using two paths whose vertices are contained in $\{u,v\}\cup  V(\mathcal{C})$. Moreover, for every vertex $z\in V(\mathcal{C})\setminus \{u,v\}$, the two paths in the odd-cover can be chosen to have $z$ as a common endpoint, unless $k\geq 2$ and we have for some $j\in[k]$ that $u,v,z\in V(C_j)$.
\end{lemma}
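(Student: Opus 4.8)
The plan is to restate the goal in an equivalent ``two terminals sharing one endpoint'' form and then construct the two paths explicitly, organizing the work according to where $u$, $v$, and $z$ sit among the cycles. The starting point is parity bookkeeping: the only odd-degree vertices of $\{f\}\cup E(\mathcal{C})$ are $u$ and $v$, so if two paths $P,Q$ odd-cover this edge set and share the endpoint $z$, then $z$ cancels and their two remaining free endpoints must be exactly $u$ and $v$. Thus the task reduces to producing a simple $z$--$u$ path $P$ and a simple $z$--$v$ path $Q$ with $P\oplus Q=\{f\}\cup E(\mathcal{C})$ and with all vertices contained in $\{u,v\}\cup V(\mathcal{C})$. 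A sharper remark drives the whole argument: since $z$ has target-degree $2$ and $z\notin\{u,v\}$, both paths must leave $z$ along the \emph{two distinct edges of its own cycle}, and in particular they can share no edge incident to $z$.

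I would isolate three construction moves. \emph{Splitting}: cutting a cycle at two chosen vertices $x,y$ produces two $x$--$y$ arcs whose symmetric difference is exactly that cycle. \emph{Chaining}: as in Lemma~\ref{lem:disjointcyclestwopaths}, any extra vertex-disjoint cycles can be absorbed into both $P$ and $Q$ using one shared pair of connector edges per cycle; each connector lies in \emph{both} paths and hence cancels in $P\oplus Q$, so the symmetric difference only gains the cycle's edges while the endpoints are unchanged. \emph{Integration of $f$}: near a common far endpoint $w$, reroute using the edge $f=uv$ together with one cancelling connector so that the single shared endpoint $w$ is ``split'' into the two desired terminals $u$ and $v$. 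The cleanest instance is $u,v\notin V(\mathcal{C})$: build two $z$--$w$ paths covering $E(\mathcal{C})$ by splitting the cycle through $z$ at $z$ and chaining the rest, then set $P=P_0\cup\{wu\}$ and $Q=Q_0\cup\{wu,uv\}$, so that $wu$ cancels and only $uv$ survives.

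The body of the proof is then a case analysis on the positions of $u$ and $v$. When $u,v\notin V(\mathcal{C})$, the move above works for every $z$. When exactly one terminal, say $u$, lies on a cycle, I would either make that cycle the last link of the chain with the far split-vertex equal to $u$ (if $z$ lies on a different cycle), or, when $z$ and $u$ share a cycle, route both paths around that cycle and through a ``free'' cycle using cancelling connectors, exploiting the external vertex $v$ as a handle to attach the detour (the explicit small model where this is checked is a triangle through $z,u$ plus a disjoint triangle). When $u$ and $v$ lie on two \emph{different} cycles, the edge $f$ merges those two cycles into a single component with exactly two odd vertices $u,v$, and I would cover that component together with the cycle through $z$ by a routing analogous to the previous cases, then chain in the remaining free cycles.

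The main obstacle throughout is keeping $P$ and $Q$ \emph{simple} while simultaneously covering every cycle, using $f$ exactly once, making every auxiliary connector appear in both paths (so it cancels), and forcing $z$ to be the shared endpoint with free ends at $u$ and $v$. The cleanest base case is a single cycle ($k=1$): there the target is connected with exactly the two odd vertices $u,v$, so it has an Eulerian $u$--$v$ trail, and splitting that trail at $z$ (which has degree $2$, hence is visited once) yields the two simple paths directly. For $k\ge 2$ the difficulty is precisely that the degree constraint forces both paths to leave $z$ along its own cycle's edges, so there must be an external ``handle'' along which the connectors to the other cycles can be routed and cancelled. This handle exists in every configuration \emph{except} when $u$, $v$, and $z$ all lie on one cycle $C_j$ with $k\ge 2$: then both terminals and the shared endpoint are locked onto $C_j$, leaving no free vertex to attach the necessary connectors to the remaining cycles, which is exactly the case excluded in the ``moreover'' clause.
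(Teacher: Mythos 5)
Your reduction and toolkit are essentially those of the paper: the parity argument showing that the two paths must be a simple $z$--$u$ path and a simple $z$--$v$ path, arc-splitting of cycles, chaining cycles with connector edges that lie in both paths and therefore cancel, and attaching $f$ at an external handle. However, the proposal has two genuine gaps. The first is that your case analysis is not exhaustive: the configuration in which $u$ and $v$ lie on the \emph{same} cycle $C_j$ while $z$ lies on a \emph{different} cycle (so $k\geq 2$) is not the exceptional case --- that requires all three of $u,v,z$ on one cycle --- so the lemma asserts the conclusion there, yet this situation falls under none of your cases ($u,v$ both outside $V(\mathcal{C})$; exactly one of $u,v$ on a cycle; $u,v$ on two different cycles). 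The paper handles it uniformly by splitting $C_j$ at $v$ itself, choosing every other split vertex different from $u$, and labelling the two arcs of each cycle so that the path whose free endpoint is $u$ (the one carrying $f$) never meets $u$ again, while the path ending at $v$ may pass through $u$ internally. Your own toolkit could also handle it --- decompose $C_j\cup\{f\}$ into two paths with a prescribed common endpoint $w\in V(C_j)\setminus\{u,v\}$ as in your base case, then chain in the remaining cycles ending with $z$'s cycle split at $z$ --- but as written the case is simply missing.

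The second gap is that the base case $k=1$ is argued incorrectly: splitting an Eulerian $u$--$v$ trail of a cycle-plus-chord at $z$ produces two \emph{trails}, not necessarily two simple paths. Concretely, for the $4$-cycle $u,a,v,b$ with chord $uv$, the Eulerian trail $u,a,v,u,b,v$ split at $z=a$ gives the pieces $u,a$ and $a,v,u,b,v$, and the latter repeats $v$. The claim is still true, but it needs an explicit routing rather than an appeal to an arbitrary Eulerian trail: if $z$ lies on the arc $A$ of $C_1$ between $u$ and $v$, take one path from $z$ along $A$ to $u$ and then across the chord to $v$, and the other path from $z$ along $A$ to $v$ and then around the second arc to $u$; both are simple, they are edge-disjoint, and they share only the endpoint $z$. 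This flaw matters doubly, since the corrected decomposition is precisely the tool needed to repair the missing case above. The remaining cases you sketch (in particular when one terminal shares $z$'s cycle) follow the paper's construction in spirit, though they gloss over the simplicity checks that the paper's choice of split vertices avoiding $u$ is designed to guarantee.
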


\begin{proof}

Let us assume without loss of generality that $z\in V(C_k)$.
If $u$ and $v$ are both also in $V(C_k)$, then to prove the lemma we can assume $k=1$. In this case $\{f\}\cup E(\mathcal{C})$ consists of a cycle with a chord, which can clearly be decomposed into (hence odd-covered by) two paths.

So we may assume without loss of generality that $v\not\in V(C_k)$. 
If $v\in \bigcup_{i=1}^{k-1} V(C_i)$, then by relabelling the cycles $C_1,\dots,C_{k-1}$ we may also assume that $v\in V(C_1)$.
We choose distinct vertices $x_i,y_i\in V(C_i)$ for all $i\in[k]$ as follows; see Figure \ref{fig:cyclecommonendpoint}.
Let $y_k=z$. If $v\in V(C_1)$, then let $x_1=v$. Choose the remaining vertices $x_i,y_i \in V(C_i)$ arbitrarily so that none are equal to $u$; this is always possible because $|V(C_i)|\geq 3$ for all $i\in[k]$. 
Now label the two paths in $C_i$ from $x_i$ to $y_i$ as $P_i,Q_i$ so that $u\not\in V(P_i)$; this is always possible because $u\not\in\{x_i,y_i\}$.
Define $e_i=y_ix_{i+1}$ for $i\in[k-1]$.
Finally, if $v\in V(C_1)$ then let $R=\emptyset$, and otherwise let $R=vx_1$.
Then
\begin{align*}
    f\ \cup\ & R \cup P_1\cup e_1\cup P_2\cup e_2\cup\dots \cup P_{k-1}\cup e_{k-1}\cup P_k \text{ and} \\
    &
    R \cup Q_1\cup e_1\cup Q_2\cup e_2\cup\dots \cup Q_{k-1}\cup e_{k-1}\cup Q_k
\end{align*}
are both paths because $u\not\in V(P_i)$ for all $i\in[k]$. 
Moreover, the two paths odd-cover the edge set $\{f\}\cup  E(\mathcal{C})$, their vertex sets are contained in $\{u,v\}\cup  V(\mathcal{C})$, and they have a common endpoint $z$.
\end{proof}

Lemma \ref{lem:disjointcyclesplusedgeandcommonendpointtwopaths} can be applied to integrate a single edge $f \in M_{\odd}\backslash \mathcal{C}$, and the ability to choose $z$ will be a crucial tool in the following section.
For now, we observe that the proof also covers the situation where $f \in M_{\odd}\cap \mathcal{C}$.

\begin{lemma}\label{lem:disjointcyclesminusanyedgetwopaths}
Let $\mathcal{C}=\{C_1,\dots,C_k\}$ be a set of vertex-disjoint cycles and let $f\in  E(\mathcal{C})$.
Then the edge set $ E(\mathcal{C})\backslash \{f\}$ can be odd-covered using 2 paths.
\end{lemma}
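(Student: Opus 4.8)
The plan is to reuse the integration construction from the proof of Lemma~\ref{lem:disjointcyclesplusedgeandcommonendpointtwopaths} almost verbatim, the key observation being that deleting an edge from a cycle turns it into a path. Write $f = uv$ and relabel so that $f \in E(C_1)$; this is legitimate since the cycles in $\mathcal{C}$ are vertex-disjoint and hence edge-disjoint. Then $W := E(C_1) \setminus \{f\}$ is a $u$–$v$ path on the vertex set $V(C_1)$, and the target edge set is
\[
E(\mathcal{C}) \setminus \{f\} \;=\; W \;\cup\; E(C_2) \cup \cdots \cup E(C_k),
\]
whose only odd-degree vertices are $u$ and $v$. Here $W$ occupies exactly the role played by the single added edge $f$ in Lemma~\ref{lem:disjointcyclesplusedgeandcommonendpointtwopaths}, the difference being that $W$ is a path rather than an edge; crucially, its interior lies in $V(C_1)$ and is therefore disjoint from all the other cycles.

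First I would dispose of the degenerate case $k = 1$: here $E(\mathcal{C}) \setminus \{f\} = W$ is itself a single path, which trivially odd-covers itself, so at most two paths suffice. Assume now $k \geq 2$ and set $\mathcal{C}' = \{C_2, \ldots, C_k\}$. Since $u, v \in V(C_1)$ are disjoint from $V(\mathcal{C}')$, the edge $f$ is not in $E(\mathcal{C}')$, so Lemma~\ref{lem:disjointcyclesplusedgeandcommonendpointtwopaths} applies to $\mathcal{C}'$ and $f$ and yields two paths $P, Q$ with $P \oplus Q = \{f\} \cup E(\mathcal{C}')$ and $V(P) \cup V(Q) \subseteq \{u, v\} \cup V(\mathcal{C}')$. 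Because $u$ has degree $1$ in $\{f\} \cup E(\mathcal{C}')$, it is an endpoint of exactly one of the two paths, say $P$, and $f$ is the terminal edge of $P$ at $u$. I would then replace $f$ by $W$: the path $P' := (P \setminus \{f\}) \cup W$ traverses the detour from $u$ to $v$ in place of the single edge $uv$. Since $W$ shares no edge with $P$, we have $P' \oplus Q = (P \oplus Q) \oplus f \oplus W = E(\mathcal{C}') \cup W = E(\mathcal{C}) \setminus \{f\}$, as desired.

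The one point that needs care — and the main (if modest) obstacle — is verifying that $P'$ is a genuine simple path. This rests on vertex-disjointness: since $u$ is an endpoint of $P$ reached only through $f$, the truncation $P \setminus \{f\}$ is a path starting at $v$ that avoids $u$ entirely, and its vertices lie in $\{v\} \cup V(\mathcal{C}')$; meanwhile the interior of $W$ lies in $V(C_1) \setminus \{v\}$, which is disjoint from $\{v\} \cup V(\mathcal{C}')$. Hence $W$ and $P \setminus \{f\}$ meet only at $v$, so their union is a simple path from $u$ to the other endpoint of $P$, with all vertices in $V(\mathcal{C})$. Equivalently, one may bypass Lemma~\ref{lem:disjointcyclesplusedgeandcommonendpointtwopaths} and rebuild the two paths directly, splitting each $C_i$ with $i \geq 2$ into two $x_i$–$y_i$ subpaths and chaining them off $W$ through connector edges that appear twice and hence cancel in the symmetric difference; this is the same computation.
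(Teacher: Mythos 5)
Your proposal is, in substance, exactly the paper's proof: relabel so that $f$ lies in a single cycle, apply Lemma~\ref{lem:disjointcyclesplusedgeandcommonendpointtwopaths} to $f$ together with the remaining cycles, and then splice the path $W = C_1\setminus\{f\}$ into the unique path of the resulting odd-cover that contains $f$. The mechanism and the conclusion are correct, and your explicit treatment of $k=1$ is fine.

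One step of your verification is not justified, however. You claim that, because $u$ has degree $1$ in $\{f\}\cup E(\mathcal{C}')$, the vertex $u$ must be an endpoint of the path $P$ containing $f$, with $f$ as its terminal edge. That does not follow from the statement of Lemma~\ref{lem:disjointcyclesplusedgeandcommonendpointtwopaths} used as a black box: a two-path odd-cover may route a nonedge through $u$ twice so that it cancels. Concretely, if $\mathcal{C}'$ is a single triangle $abc$ and $f=uv$, then $P = b\,a\,u\,v$ and $Q = b\,c\,a\,u$ form a valid odd-cover of $\{f\}\cup E(\mathcal{C}')$ (the nonedge $au$ appears in both paths and cancels), yet $u$ is an \emph{internal} vertex of $P$, so your truncation argument --- that $P\setminus\{f\}$ is a single path avoiding $u$ --- fails for this choice of $P,Q$. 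The gap is easy to close, in either of two ways: (1) observe that the splice works regardless of where $f$ sits in $P$, since deleting $f$ from $P$ leaves at most two vertex-disjoint subpaths, one ending at $u$ and one ending at $v$, and the interior of $W$ avoids $V(P)\cup V(Q)$, so $(P\setminus\{f\})\cup W$ is always a simple path and the symmetric-difference computation is unchanged (this is implicitly what the paper relies on); or (2) invoke not just the statement but the explicit construction in the proof of Lemma~\ref{lem:disjointcyclesplusedgeandcommonendpointtwopaths}, in which $f$ genuinely is a terminal edge. With that repair, your argument coincides with the paper's.
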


\begin{proof}
Observe that the edge set $E(\mathcal{C})\backslash \{f\}$ consists of $k-1$ vertex-disjoint cycles and a path disjoint from those cycles. Let us assume without loss of generality that $f\in E(C_k)$. By Lemma \ref{lem:disjointcyclesplusedgeandcommonendpointtwopaths}, the edge set $\{f\} \cup E(\mathcal{C}\setminus\{C_k\})$ can be odd-covered using two paths whose vertices are contained in $V(f)\cup V(\mathcal{C} \setminus \{C_k\})$. Then exactly one of the two paths contains $f$, and by replacing $f$ in this path by the path $C_k\setminus \{f\}$, we obtain two paths odd-covering the edge set $E(\mathcal{C})\setminus \{f\}$.
\end{proof}

Lemmas \ref{lem:disjointcyclesplusedgeandcommonendpointtwopaths} and \ref{lem:disjointcyclesminusanyedgetwopaths} imply that any edge from $M_{\odd}$ can be integrated into any set of vertex-disjoint cycles. 

\begin{cor} \label{cor:cyclesoplusoneedge}
    Let $\mathcal{C}=\{C_1,\dots,C_k\}$ be a set of vertex-disjoint cycles and let $f$ be an edge. Then $\{f\}\oplus E(\mathcal{C})$ can be odd-covered using two paths.
\end{cor}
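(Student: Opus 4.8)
The plan is to prove Corollary~\ref{cor:cyclesoplusoneedge} by splitting into the two cases depending on whether the edge $f$ already lies in $E(\mathcal{C})$ or not, and in each case invoke the appropriate lemma that was just established. The key observation is that the symmetric difference $\{f\}\oplus E(\mathcal{C})$ equals $\{f\}\cup E(\mathcal{C})$ when $f\notin E(\mathcal{C})$ and equals $E(\mathcal{C})\setminus\{f\}$ when $f\in E(\mathcal{C})$, so the corollary is really just a uniform restatement of Lemmas~\ref{lem:disjointcyclesplusedgeandcommonendpointtwopaths} and~\ref{lem:disjointcyclesminusanyedgetwopaths} combined.

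Concretely, first I would dispose of a degenerate possibility: if $f$ is a loop or otherwise not a genuine new edge it contributes nothing, but assuming $f=uv$ is an honest edge we proceed by cases. In the case $f\notin E(\mathcal{C})$, the set $\{f\}\oplus E(\mathcal{C})=\{f\}\cup E(\mathcal{C})$ is odd-covered by two paths directly by Lemma~\ref{lem:disjointcyclesplusedgeandcommonendpointtwopaths} (the ``moreover'' clause about choosing a common endpoint $z$ is not needed here, only the existence statement). In the case $f\in E(\mathcal{C})$, the set $\{f\}\oplus E(\mathcal{C})=E(\mathcal{C})\setminus\{f\}$ is odd-covered by two paths by Lemma~\ref{lem:disjointcyclesminusanyedgetwopaths}. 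Either way we obtain the desired two paths, completing the proof.

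There is essentially no main obstacle here, since all the work has already been absorbed into the two preceding lemmas; the only thing to verify carefully is the elementary set-theoretic identity that symmetric difference with a single edge coincides with either union or deletion according to membership. I would simply state both cases and cite the corresponding lemma, noting that in both subcases the vertex sets of the covering paths remain inside $V(\mathcal{C})\cup V(f)$, which is inherited automatically from the lemma statements.
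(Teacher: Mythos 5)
Your proposal is correct and matches the paper's approach exactly: the paper derives Corollary~\ref{cor:cyclesoplusoneedge} by precisely this case split, citing Lemma~\ref{lem:disjointcyclesplusedgeandcommonendpointtwopaths} when $f\notin E(\mathcal{C})$ and Lemma~\ref{lem:disjointcyclesminusanyedgetwopaths} when $f\in E(\mathcal{C})$. You are also right that only the existence part of Lemma~\ref{lem:disjointcyclesplusedgeandcommonendpointtwopaths} is needed, since the exceptional case there restricts only the choice of the common endpoint $z$, not the existence of the two covering paths.
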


Thus, one can odd-cover one edge from $M_{\odd}$ at the same time two paths are used to reduce the maximum degree of the Eulerian graph $G$. By adjusting the iterative construction for the proof of Theorem \ref{thm:upperboundDelta+o2}, one arrives at an improved upper bound of $\max\{\Delta_e,\frac{v_{\odd}}{2}+\frac{1}{2}\Delta_e\}$ (recall that $\Delta_e = 2\left\lceil\frac{\Delta}{2}\right\rceil$).

Our goal is the much stronger bound of $\max\{\Delta_e,\frac{v_{\odd}}{2}\}$ claimed in Theorem \ref{thm:upperboundDeltaVSo2}. To this end, we have to integrate two edges from $M_{\odd}$, instead of just one, in each iteration.

\begin{figure}[t]
\centering
\begin{tikzpicture}[vertices/.style={draw, fill=black, circle, inner sep=0pt, minimum size = 4pt, outer sep=0pt}, r_edge/.style={draw=red,line width= 1,>=latex,red}, b_edge/.style={draw=blue,line width= 1,>=latex,blue}, k_edge/.style={draw=black,line width= 1,>=latex,black}, scale=1.8]
\node[vertices, label=below:$v\ $] (v) at (-1,0) {};
\node[vertices, label=below:$u$] (u) at (3,-0.35) {};

\node [vertices, label=below:$x_1\ \ $] (x_1) at (0,0) {};
\node [vertices, label=below:$\ \ \ y_1$] (y_1) at (1,0) {};
\node [inner sep=0pt, minimum size = 3pt, outer sep=0pt] (x_2) at (1.5,0) {};

\node [] at (1.75,0) {$\dots$};
\node [] at (4.25,0) {$\dots$};

\node [inner sep=0pt, minimum size = 3pt, outer sep=0pt] (y_i-1) at (2,0) {};
\node [vertices, label=below:$x_i\ \ \ $] (x_i) at (2.5,0) {};
\node [vertices, label=below:$\ \ \ y_i$] (y_i) at (3.5,0) {};
\node [inner sep=0pt, minimum size = 3pt, outer sep=0pt] (x_i+1) at (4,0) {};

\node [inner sep=0pt, minimum size = 3pt, outer sep=0pt] (y_k-1) at (4.5,0) {};
\node [vertices, label=below:$x_k\ \ \,$] (x_k) at (5,0) {};
\node [vertices, label=below:$\ \ \ y_k$] (y_k) at (6,0) {};

\node [vertices, label=above:$\ \ z$] (z) at (6,0) {};

\path [b_edge] (v) edge[bend left=30, looseness=0] (x_1);
\node [label=above:$R$] at (-0.5,0) {};
\path [b_edge] (x_1) edge[bend left=80] node[pos=0.5, above] {\color{black}$P_1$} (y_1);
\path [b_edge] (y_1) edge[bend left=30, looseness=0] (x_2);

\path [b_edge] (y_k-1) edge[bend left=30, looseness=0] (x_k);
\path [b_edge] (x_k) edge[bend left=80] node[pos=0.5, above] {\color{black}$P_k$} (y_k);

\path [r_edge] (v) edge[bend right=30, looseness=0] (x_1);
\path [r_edge] (x_1) edge[bend right=80] node[pos=0.5, below] {\color{black}$Q_1$} (y_1);
\path [r_edge] (y_1) edge[bend right=30, looseness=0] (x_2);

\path [r_edge] (y_k-1) edge[bend right=30, looseness=0] (x_k);
\path [r_edge] (x_k) edge[bend right=80]node[pos=0.5, below] {\color{black}$Q_k$}(y_k);

\path [b_edge] (u) edge[out=210,in=-70,looseness=0.6] node[pos=0.45, below] {\color{black}$f$}(v);

\path[b_edge] (y_i-1) edge[bend left=30, looseness=0] (x_i);
\path[r_edge] (y_i-1) edge[bend right=30, looseness=0] (x_i);

\path[r_edge] (x_i) edge[out=-90, in=180] (u);
\path[r_edge] (u) edge[out=0, in=-90] (y_i);
\path[b_edge] (x_i) edge[bend left=80] node[pos=0.5, above] {\color{black}$P_i$} (y_i);

\path[b_edge] (y_i) edge[bend left=30, looseness=0] (x_i+1);
\path[r_edge] (y_i) edge[bend right=30, looseness=0] (x_i+1);

\end{tikzpicture}

    \caption{One possible case in the proof of Lemma \ref{lem:disjointcyclesplusedgeandcommonendpointtwopaths} when $z\in V(C_k)$ and $v\not\in V(C_k)$. If $v\in V(\mathcal{C})$, then we set $x_1=v$ and $R=\emptyset$. The vertex $u$ could be anywhere, but we can always choose $x_i,y_i$ so that none are equal to $u$.}
    \label{fig:cyclecommonendpoint}
\end{figure}
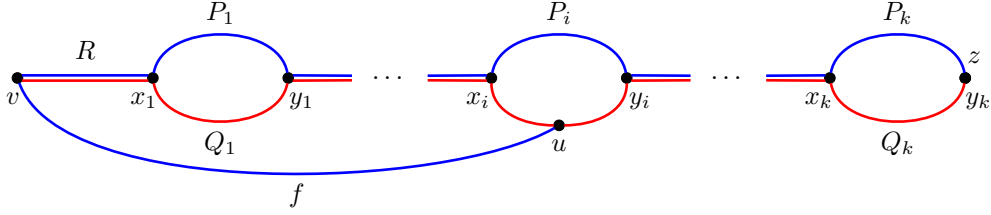

\subsubsection{Integration of two edges}\label{sec:twoedges}

We showed in Lemma \ref{lem:disjointcyclestwopaths} that the edges of any set $\mathcal{C}=\{C_1,\dots,C_k\}$ of vertex-disjoint cycles can be odd-covered using two paths. In the previous section, we extended this observation to the symmetric difference of the cycles and a single edge; c.f. Lemmas \ref{lem:disjointcyclesplusedgeandcommonendpointtwopaths} and \ref{lem:disjointcyclesminusanyedgetwopaths}. We begin this section by showing that if we are given two disjoint edges $f_1$ and $f_2$ ({\em i.e.}, they do not share a vertex) not in $ E(\mathcal{C})$, then two paths still suffice to odd-cover the edge set $\{f_1,f_2\}\cup  E(\mathcal{C})$, with one exceptional case.

If $C$ is a cycle and $v_1,\dots,v_k$, $k\geq 2$, are distinct vertices in $C$, then the edges of $C$ can uniquely be partitioned into $k$ paths, each with distinct endpoints in $\{v_1,\dots,v_k\}$. We call these paths the \emph{subpaths of $C$ demarcated by $v_1,\dots,v_k$}.
If $P$ is a path and $u,v \in V(P)$, then the subpath of $P$ from $u$ to $v$ is denoted $uPv$.

\begin{lemma}\label{lem:disjointcyclesplustwoedgestwopaths}

Let $\mathcal{C}=\{C_1,\dots,C_k\}$ be a set of vertex-disjoint cycles and let $f_1,f_2$ be disjoint edges not in $E(\mathcal{C})$.
Then $\{f_1,f_2\}\cup E(\mathcal{C})$ can be odd-covered using 2 paths, unless $k\geq 2$ and we have for some $j\in[k]$ that $\{f_1,f_2\}\cup C_j$ forms a subdivision of $K_4$ and at most one of the four subpaths of $C_j$ demarcated by the endpoints of $f_1,f_2$ has an internal vertex.
\end{lemma}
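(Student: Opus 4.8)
The four endpoints of $f_1,f_2$ are the only odd-degree vertices of the edge set $\{f_1,f_2\}\cup E(\mathcal C)$ (every cycle vertex has even degree, and each $f_i$ flips the parity of its two endpoints, which are all distinct since $f_1,f_2$ are disjoint). Hence in any odd-cover by two paths, these four vertices must be exactly the four distinct path-endpoints, and no other vertex may be a path-endpoint. My plan is to split into three regimes: (i) at least one of $f_1,f_2$ has an endpoint lying on a cycle, where I reduce to the single-edge result (Lemma~\ref{lem:disjointcyclesplusedgeandcommonendpointtwopaths}); (ii) all four endpoints are off the cycles, handled by a direct construction; and (iii) all four endpoints lie on one common cycle, which I treat separately and which is where the exception originates.

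For regime (i), say $f_2$ has an endpoint $z\in V(\mathcal C)$. I would apply Lemma~\ref{lem:disjointcyclesplusedgeandcommonendpointtwopaths} to odd-cover $\{f_1\}\cup E(\mathcal C)$ by two paths $P,Q$ sharing the common endpoint $z$, with the other two endpoints being the endpoints of $f_1$. I then \emph{append} the single edge $f_2$ at $z$ to whichever of $P,Q$ avoids the other endpoint of $f_2$; this toggles exactly $f_2$, yielding a two-path odd-cover of $\{f_1,f_2\}\cup E(\mathcal C)$ whose four endpoints are precisely the four required vertices. The freedom to choose $z$ in Lemma~\ref{lem:disjointcyclesplusedgeandcommonendpointtwopaths}, together with the freedom in choosing the arc-splitting vertices $x_i,y_i$ of each cycle, keeps the appended path simple; and by the symmetric option (integrate $f_2$ and append $f_1$) the exceptional hypothesis $\{a_1,b_1,z\}\subseteq V(C_j)$ of that lemma can be dodged unless all four endpoints lie in a single cycle. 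For regime (ii) I would cover $\mathcal C$ by two paths $S,T$ from $x$ to $y$ via Lemma~\ref{lem:disjointcyclestwopaths} and attach $f_1,f_2$ using two doubled connectors, e.g. $P = a_1\,f_1\,b_1\cdots x\,[S]\,y\cdots a_2\,f_2\,b_2$ and $Q = b_1\cdots x\,[T]\,y\cdots a_2$: the connectors appear in both paths and cancel, $S\oplus T=E(\mathcal C)$, and the four endpoints are exactly the four free vertices $a_1,b_1,a_2,b_2$.

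Regime (iii) is the heart of the matter. Here all four endpoints lie on one cycle $C_j$, so I analyze the graph $\{f_1,f_2\}\cup C_j$. If $f_1,f_2$ are non-crossing chords, this graph decomposes into two paths ending at the four corner vertices with enough slack to splice the remaining cycles in through a shared doubled connector. If $f_1,f_2$ cross, then $\{f_1,f_2\}\cup C_j$ is a subdivision of $K_4$; when $k=1$ it is covered by two Hamiltonian-type paths whose endpoint sets partition the four corners, and when $k\ge 2$ one can still thread the remaining cycles in \emph{provided} some arc of $C_j$ supplies a non-corner vertex at which both paths can share a doubled connector to the rest of $\mathcal C$. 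This is exactly possible when at least two of the four subpaths demarcated by the endpoints of $f_1,f_2$ have internal vertices, and it fails precisely when the $K_4$-subdivision is tight, i.e. at most one arc is subdivided and $k\ge 2$ — which is the stated exceptional case.

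The step I expect to be the main obstacle is this crossing-chords analysis. In a tight $K_4$ the two covering paths are forced Hamiltonian paths that share no internal vertex, and $K_4$ has no non-edge among its four corners, so there is simply no vertex at which to anchor the connectors needed to reach the other cycles; the exception is calibrated to exactly this failure, and one must show that two subdivided arcs always restore a usable anchor. More pervasively, the recurring technical burden throughout all regimes is verifying that each constructed pair really consists of two \emph{simple} paths — in particular resolving the vertex collisions that occur when the endpoints of $f_1,f_2$ lie on the cycles, and carefully bookkeeping which arcs are assigned to which path in the crossing case so that a shared connector vertex is available.
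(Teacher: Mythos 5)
Your overall architecture (split by where the four endpoints of $f_1,f_2$ sit, reduce to the single-edge lemma when they are spread out, and isolate the all-on-one-cycle situation as the home of the exception) matches the paper's case structure, and your regimes (i) and (ii) plausibly cover what the paper does in its Cases 2 and 3 --- in fact more uniformly, since the paper needs a separate edge-rerouting trick there. But the proposal has genuine gaps at exactly the point where the lemma is hardest. First, the crux: when all four endpoints lie on one cycle $C_j$ and the chords cross, so that $\{f_1,f_2\}\cup C_j$ is a $K_4$-subdivision and $k\geq 2$, you explicitly defer the construction (``one must show that two subdivided arcs always restore a usable anchor''). This is not a routine verification: a single anchor vertex does not suffice, and the construction genuinely depends on \emph{where} the two subdivided arcs sit. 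The paper's Case 1.2 splits into two structurally different subcases --- the two arcs with internal vertices are either disjoint (say $R_u,R_v$) or incident (say $R_u,R_1$) --- and gives a different interleaving of the two paths through the doubled connectors in each subcase. Without this, the lemma is not proved, since this is precisely the case that calibrates the exceptional hypothesis. Second, your non-crossing description fails as literally stated: the decomposition of $\{f_1,f_2\}\cup C_j$ into the two paths $f_1\cup R_v\cup f_2$ and $R_1\cup R_u\cup R_2$ has only the four corners as shared vertices, and each corner is an \emph{interior} vertex of one of the two paths, so no doubled connector can be attached there without creating a degree-$3$ vertex. The correct move (paper's Case 1.1) is to split the arcs $R_1,R_2$ at internal vertices $z_1,z_2$ (which exist because $f_1,f_2$ are non-edges) and give the two halves of each arc to \emph{different} paths, so that both paths traverse the connectors $z_1x$ and $yz_2$; the local pieces are four paths, not two.

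A smaller but real issue is that regime (i) uses Lemma \ref{lem:disjointcyclesplusedgeandcommonendpointtwopaths} beyond its stated strength. To append $f_2=za_2$ at the common endpoint $z$ you need one of the two paths to avoid $a_2$, which the lemma's statement does not guarantee when $a_2\in V(\mathcal{C})$ (in its construction $a_2$ could be a demarcation vertex $x_i$ or $y_i$ and hence lie on both paths). You gesture at re-choosing the $x_i,y_i$, i.e.\ re-opening the proof, and indeed a strengthened lemma (``with $z$ as common endpoint and a prescribed vertex $a_2$ on at most one path'') appears provable; but there are degenerate configurations --- e.g.\ a triangle of $\mathcal{C}$ containing both $a_2$ and the dangling endpoint of $f_1$, or the forced choices $y_k=z$, $x_1=v$ colliding with the avoidance constraints --- where one of your two symmetric options is blocked, and checking that the two options never fail simultaneously is itself a nontrivial case analysis you have not carried out. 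So while your route through regimes (i) and (ii) is a legitimate and arguably cleaner alternative to the paper's Cases 2, 3.1 and 3.2 (which instead excludes the cycle containing a lone endpoint, odd-covers the rest, and reroutes an edge shared by the two paths through that cycle), the proposal as written does not yet constitute a proof: the exceptional/crossing analysis is missing, and the strengthened single-edge lemma it leans on is unproved.
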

\begin{proof}
We proceed by a case split on which cycles of $\mathcal{C}$ contain the endpoints of $f_1,f_2$. Let $u_i,v_i$ denote the endpoints of $f_i$ for $i\in[2]$. 

\begin{description}
    \item[Case 1] There is a cycle of $\mathcal{C}$ containing all endpoints of $f_1,f_2$, say $u_1,v_1,u_2,v_2\in V(C_k)$.

    If $k=|\mathcal{C}|=1$, then $\{f_1,f_2\}\cup E(\mathcal{C})$ consists of a cycle with two disjoint chords, and it is easy to see that the edges of such a graph can be decomposed into (hence odd-covered by) two paths.
    So let us assume that $k\geq 2$. Let $P,Q$ be two paths odd-covering $\bigcup_{i=1}^{k-1}E(C_i)$ with $V(P)\cup V(Q) \subseteq \bigcup_{i=1}^{k-1}V(C_i)$ given by Lemma \ref{lem:disjointcyclestwopaths}, and let $x,y$ denote their common endpoints.

    We split into two further cases depending on whether $\{f_1,f_2\}\cup E(C_k)$ forms a $K_4$-subdivision or not. If it does not form a $K_4$-subdivision, then we may assume by symmetry that the four vertices $u_1,v_1,v_2,u_2$ occur in this cyclic order on $C_k$; otherwise, we may assume that they occur in the cyclic order $u_1,u_2,v_1,v_2$ on $C_k$.

    \begin{description}
        \item[Case 1.1] The endpoints of $f_1,f_2$ occur in the cyclic order $u_1,v_1,v_2,u_2$ on $C_k$.  See Figure \ref{fig:case:uvvu}.

        Let $R_1, R_2, R_u, R_v$ denote the four subpaths of $C_k$ demarcated by $u_1, v_1, v_2, u_2$ with endpoints $\{u_1,v_1\}$, $\{u_2,v_2\}$, $\{u_1,u_2\}$, $\{v_1,v_2\}$ respectively.
        Since $f_1=u_1v_1$ and $f_2=u_2v_2$ are edges not in $E(\mathcal{C})$, both $R_1$ and $R_2$ have at least one internal vertex, say $z_1$ and $z_2$ respectively.
        Then the two paths
        \begin{align*}
            u_1v_1 \cup v_1R_1z_1 \cup z_1x\ \cup\ &P \cup yz_2 \cup z_2R_2u_2 \cup u_2v_2 \text{ and} \\
            R_u \cup u_1R_1z_1 \cup z_1x\ \cup\ &Q \cup yz_2 \cup z_2R_2v_2\cup R_v
        \end{align*}
        odd-cover $\{f_1,f_2\}\cup  E(\mathcal{C})$.

\begin{figure}[t]
\centering
\begin{subfigure}[b]{0.5\textwidth}
\begin{tikzpicture}[vertices/.style={draw, fill=black, circle, inner sep=0pt, minimum size = 5pt, outer sep=0pt}, scale=1]
  \def\Radius{1.5}
  \path
    (0,0) coordinate (M)
    (\Radius, 0) coordinate (A)
    (M) + (45:\Radius) coordinate (B)
    (M) +(90:\Radius) coordinate (C) 
    (M) +(135:\Radius) coordinate (D)
    (M) +(180:\Radius) coordinate (E)
    (M) +(225:\Radius) coordinate (F)
    (M) +(270:\Radius) coordinate (G)
    (M) +(315:\Radius) coordinate (H)
  ;

  \draw[line width=1, red] (A) arc(0:45:\Radius);
  \draw[line width=1, blue] (B) arc(45:90:\Radius);
  \draw[line width=1, red] (C) arc(90:135:\Radius);
  \draw[line width=1, red] (D) arc(135:180:\Radius);
  \draw[line width=1, red] (E) arc(180:225:\Radius);
  \draw[line width=1, blue] (F) arc(225:270:\Radius);
  \draw[line width=1, red] (G) arc(270:315:\Radius);
  \draw[line width=1, red] (H) arc(315:360:\Radius);
  
\node at ([shift={(0.9,0.1)}]C) {$R_1$};
\node at ([shift={(-0.9,-0.1)}]G) {$R_2$};
  
\node[vertices, label=right:$u_1$] (A) at (A) {};
\node[vertices, label={[xshift=-0.15cm, yshift=-0.55cm]:$z_1$}] (B) at (B) {};
\node[vertices, label=above:$v_1$] (C) at (C) {};
\node[label={[xshift=-0.3cm, yshift=-0.1cm]$R_v$}] (D) at (D) {};
\node[vertices, label=left:$v_2$] (E) at (E) {};
\node[vertices, label={[xshift=0.15cm, yshift=-0.05cm]:$z_2$}] (F) at (F) {};
\node[vertices, label=below:$u_2$] (G) at (G) {};
\node[label={[xshift=0.4cm, yshift=-0.6cm]$R_u$}] (H) at (H) {};

\path[line width=1, blue] (A) edge[bend left=30] node[below left, xshift=0.15cm, yshift=0.1cm] {$\color{black} f_1$} (C);
\path[line width=1, blue] (E) edge[bend left=30] node[above right, xshift=-0.05cm, yshift=-0.1cm] {$\color{black} f_2$} (G);

\node[vertices, label=below:$x$] (x) at ($(B)+(1.5,0)$) {};
\node[vertices, label=below:$y$] (y) at ($(F)-(1.5,0)$) {};
\path [line width=1, blue] (B) edge[bend left=30, looseness=0] (x);
\path [line width=1, red] (B) edge[bend right=30, looseness=0] (x);
\path [line width=1, blue] (y) edge[bend left=30, looseness=0] (F);
\path [line width=1, red] (y) edge[bend right=30, looseness=0] (F);

\draw[dashed] (x) edge[out=130,in=110,looseness=1.8] node[pos=0.6, above left] {$P,Q$} (y);
\end{tikzpicture}
\label{fig:case:uvvu1}
\end{subfigure}

\caption{Case 1.1 in the proof of Lemma \ref{lem:disjointcyclesplustwoedgestwopaths}, where $u_1,v_1,v_2,u_2$ occur in this order on $V(C_k)$ and $k\geq 2$. Here $R_1$ is the path from $u_1$ to $v_1$ containing $z_1$, and $R_2$ is the path from $u_2$ to $v_2$ containing $z_2$.} 
\label{fig:case:uvvu}
\end{figure}
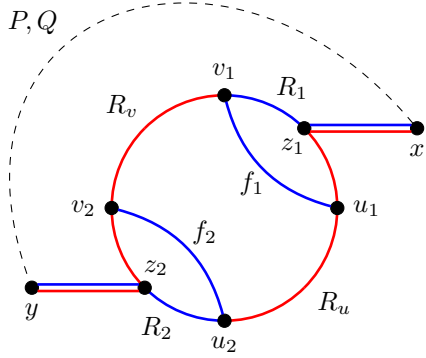

        \item[Case 1.2] The endpoints of $f_1,f_2$ occur in the cyclic order $u_1,u_2,v_1,v_2$ on $C_k$. See Figure \ref{fig:case:uuvv}.
                
        Let $R_u,R_v,R_1,R_2$ denote the four subpaths of $C_k$ demarcated by $u_1,u_2,v_1,v_2$ with endpoints $\{u_1,u_2\}, \{v_1,v_2\}, \{u_1,v_2\}$ and $\{u_2,v_1\}$ respectively.
        To prove the lemma we may assume that at least two of $R_u,R_v,R_1,R_2$ have internal vertices.
        There are two cases up to symmetry; two of $R_u,R_v,R_1,R_2$ having internal vertices are either disjoint or incident ({\em i.e.}, they share an endpoint). In the first case, we may assume by symmetry that $R_u,R_v$ have internal vertices. In the second case, we may assume that $R_u,R_1$ have internal vertices.

        If $R_u,R_v$ have internal vertices, say $z_u,z_v$ respectively, then the two paths
        \begin{align*}
            R_1 \cup v_2u_2 \cup u_2R_uz_u \cup z_ux\ \cup\ &P \cup yz_v\cup  z_vR_vv_1 \text{ and} \\
            R_2 \cup v_1u_1 \cup u_1R_uz_u \cup z_ux\ \cup\ &Q\cup yz_v \cup z_vR_vv_2
        \end{align*}
        odd-cover $\{f_1,f_2\}\cup E(\mathcal{C})$. See Figure \ref{fig:case:uuvv1}.
        
        If $R_u,R_1$ have internal vertices, say $z_u,z_1$ respectively, then the two paths 
        \begin{align*}
            v_1u_1 \cup u_1R_uz_u \cup z_ux\ \cup\ &P \cup yz_1 \cup z_1R_1v_2\cup v_2u_2 \text{ and} \\
            R_v \cup R_2 \cup u_2R_uz_u \cup z_ux\ \cup\ &Q \cup yz_1\cup z_1R_1u_1
        \end{align*}
        odd-cover $\{f_1,f_2\}\cup E(\mathcal{C})$. See Figure \ref{fig:case:uuvv2}. We have thus proved the lemma when the endpoints of $f_1,f_2$ all belong to one cycle of $\mathcal{C}$.
        Therefore, we may assume that $u_1,v_1,u_2,v_2$ do not all belong to one cycle.

        \begin{figure}[t]
        \centering
        \begin{subfigure}[b]{0.47\linewidth}
        \begin{tikzpicture}[vertices/.style={draw, fill=black, circle, inner sep=0pt, minimum size = 5pt, outer sep=0pt}, scale=1]
          \def\Radius{1.5}
          \path
            (0,0) coordinate (M)
            (\Radius, 0) coordinate (A)
            (M) + (45:\Radius) coordinate (B)
            (M) +(90:\Radius) coordinate (C) 
            (M) +(135:\Radius) coordinate (D)
            (M) +(180:\Radius) coordinate (E)
            (M) +(225:\Radius) coordinate (F)
            (M) +(270:\Radius) coordinate (G)
            (M) +(315:\Radius) coordinate (H)
          ;

          \draw[line width=1, red] (A) arc(0:45:\Radius);
          \draw[line width=1, blue] (B) arc(45:90:\Radius);
          \draw[line width=1, red] (C) arc(90:135:\Radius);
          \draw[line width=1, red] (D) arc(135:180:\Radius);
          \draw[line width=1, blue] (E) arc(180:225:\Radius);
          \draw[line width=1, red] (F) arc(225:270:\Radius);
          \draw[line width=1, blue] (G) arc(270:315:\Radius);
          \draw[line width=1, blue] (H) arc(315:360:\Radius);

        \node at ([shift={(0.9,0.1)}]C) {$R_u$};
        \node at ([shift={(-0.9,-0.1)}]G) {$R_v$};

        \node[vertices, label=right:$u_1$] (A) at (A) {};
        \node[vertices, label={[xshift=-0.15cm, yshift=-0.55cm]:$z_u$}] (B) at (B) {};
        \node[vertices, label=above:$u_2$] (C) at (C) {};
        \node[label={[xshift=-0.3cm, yshift=-0.1cm]$R_2$}] (D) at (D) {};
        \node[vertices, label=left:$v_1$] (E) at (E) {};
        \node[vertices, label={[xshift=0.15cm, yshift=-0.05cm]:$z_v$}] (F) at (F) {};
        \node[vertices, label=below:$v_2$] (G) at (G) {};
        \node[label={[xshift=0.4cm, yshift=-0.6cm]$R_1$}] (H) at (H) {};
        
        \path[line width=1, red] (A) edge node[pos=0.25,below,yshift=0.05cm] {$\color{black} f_1$} (E);
        \path[line width=1, blue] (C) edge node[pos=0.22, left,xshift=0.07cm] {$\color{black} f_2$} (G);
        
        \node[vertices, label=below:$x$] (x) at ($(B)+(1.5,0)$) {};
        \node[vertices, label=below:$y$] (y) at ($(F)-(1.5,0)$) {};
        \path [line width=1, blue] (B) edge[bend left=30, looseness=0] (x);
        \path [line width=1, red] (B) edge[bend right=30, looseness=0] (x);
        \path [line width=1, blue] (y) edge[bend left=30, looseness=0] (F);
        \path [line width=1, red] (y) edge[bend right=30, looseness=0] (F);
        
        \draw[dashed] (x) edge[out=130,in=110,looseness=1.8] node[pos=0.6, above left] {$P,Q$} (y);
        \end{tikzpicture}
        \caption{$R_u$ is the path from $u_1$ to $u_2$ containing $z_u$, and $R_v$ is the path from $v_1$ to $v_2$ containing $z_v$.}
        \label{fig:case:uuvv1}
        \end{subfigure}
        \hfill
        \begin{subfigure}[b]{0.47\linewidth}
        \begin{tikzpicture}[vertices/.style={draw, fill=black, circle, inner sep=0pt, minimum size = 5pt, outer sep=0pt}, scale=1]
          \def\Radius{1.5}
          \path
            (0,0) coordinate (M)
            (\Radius, 0) coordinate (A)
            (M) + (45:\Radius) coordinate (B)
            (M) +(90:\Radius) coordinate (C) 
            (M) +(135:\Radius) coordinate (D)
            (M) +(180:\Radius) coordinate (E)
            (M) +(225:\Radius) coordinate (F)
            (M) +(270:\Radius) coordinate (G)
            (M) +(315:\Radius) coordinate (H)
          ;
          
          \draw[line width=1, blue] (A) arc(0:45:\Radius);
          \draw[line width=1, red] (B) arc(45:90:\Radius);
          \draw[line width=1, red] (C) arc(90:135:\Radius);
          \draw[line width=1, red] (D) arc(135:180:\Radius);
          \draw[line width=1, red] (E) arc(180:225:\Radius);
          \draw[line width=1, red] (F) arc(225:270:\Radius);
          \draw[line width=1, blue] (G) arc(270:315:\Radius);
          \draw[line width=1, red] (H) arc(315:360:\Radius);
        
        \node at ([shift={(0.9,0.1)}]C) {$R_u$};
        \node at ([shift={(0.9,-0.1)}]G) {$R_1$};
          
        \node[vertices, label={[xshift=0.33cm,yshift=-0.32cm]:$u_1$}] (A) at (A) {};
        \node[vertices, label={[xshift=-0.15cm, yshift=-0.55cm]:$z_u$}] (B) at (B) {};
        \node[vertices, label=above:$u_2$] (C) at (C) {};
        \node[label={[xshift=-0.3cm, yshift=-0.1cm]$R_2$}] (D) at (D) {};
        \node[vertices, label=left:$v_1$] (E) at (E) {};
        \node[label={[xshift=-0.3cm, yshift=-0.5cm]$R_v$}] (F) at (F) {};
        \node[vertices, label=below:$v_2$] (G) at (G) {};
        \node[vertices, label={[xshift=-0.15cm, yshift=-0.05cm]:$z_1$}] (H) at (H) {};
        
        \path[line width=1, blue] (A) edge node[pos=0.75,below,yshift=0.05cm] {$\color{black} f_1$} (E);
        \path[line width=1, blue] (C) edge node[pos=0.22, left,xshift=0.07cm] {$\color{black} f_2$} (G);
        
        \node[vertices, label=below:$x$] (x) at ($(B)+(1.5,0)$) {};
        \node[vertices, label=below:$y$] (y) at ($(H)+(1.5,0)$) {};
        \path [line width=1, blue] (B) edge[bend left=30, looseness=0] (x);
        \path [line width=1, red] (B) edge[bend right=30, looseness=0] (x);
        \path [line width=1, red] (y) edge[bend left=30, looseness=0] (H);
        \path [line width=1, blue] (y) edge[bend right=30, looseness=0] (H);
        
        \draw[dashed] (x) edge[out=-30,in=30,looseness=1.2] node[pos=0.5, left] {$P,Q$} (y);
        \end{tikzpicture}
        \caption{$R_u$ is the path from $u_1$ to $u_2$ containing $z_u$, and $R_1$ is the path from $u_1$ to $v_2$ containing $z_1$.}
        \label{fig:case:uuvv2}
        \end{subfigure}
        
        \caption{Case 1.2 in the proof of Lemma \ref{lem:disjointcyclesplustwoedgestwopaths} where $u_1,u_2,v_1,v_2$ occur in this order on $V(C_k)$ and $k\geq 2$. This case is divided in to two subcases depending on which of the paths $R_1,R_2,R_u,R_v$ have at least one internal vertex.} 
        \label{fig:case:uuvv}
        \end{figure}
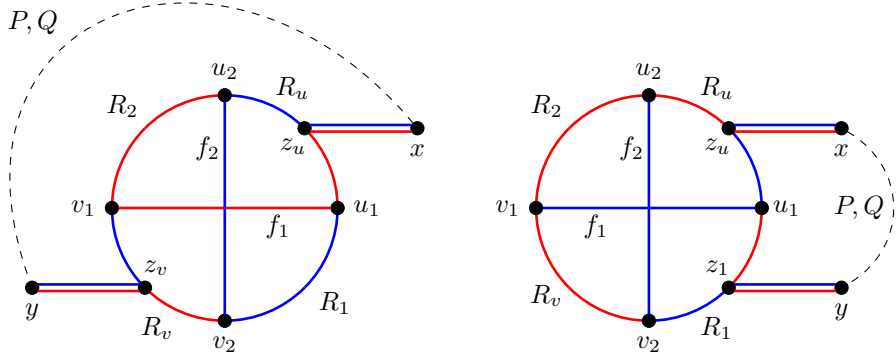
    \end{description}

    \item[Case 2] There is a cycle of $\mathcal{C}$ containing exactly three of the endpoints of $f_1,f_2$, say $u_1,u_2,v_2\in V(C_1)$ and $v_1\not\in V(C_1)$. See Figure \ref{fig:case:uuvinC1}.

    Let $R_2,R_u,R_v$ denote the three subpaths of $C_1$ demarcated by $u_1,u_2,v_2$ with endpoints $\{u_2,v_2\}, \{u_1,u_2\}, \{u_1,v_2\}$ respectively. 
    Since $f_2=u_2v_2$ is not in $E(\mathcal{C})$, $R_2$ has an internal vertex, say $y_1$.
    For all $i\in[k]\setminus \{1\}$, choose arbitrary $x_i,y_i\in V(C_i)$ and $P_i,Q_i$ as before, except that if $v_1\in V(C_i)$ for some $i\neq 1$, then we reorder $C_2,\dots,C_{k}$ and choose $y_k=v_1$. In this case also define $R=\emptyset$. If $v_1\not\in  V(\mathcal{C})$, then define $R=y_kv_1$. For $i\in[k-1]$ let $e_i=y_ix_{i+1}$. 
    Then the two paths
    \begin{align*}
        u_2v_2\ \cup v_2R_2y_1 \cup e_1\cup P_2\cup e_2\ \cup &\dots \cup P_{k-1}\cup e_{k-1}\cup P_k \cup R \cup v_1u_1 \text{ and} \\
        R_v\cup R_u\cup u_2R_2y_1 \cup e_1\cup Q_2\cup e_2\ \cup &\dots \cup Q_{k-1}\cup e_{k-1}\cup Q_k \cup R
    \end{align*}
    odd-cover $\{f_1,f_2\}\cup  E(\mathcal{C})$.

    \begin{figure}[t]
    \centering
    \begin{tikzpicture}[vertices/.style={draw, fill=black, circle, inner sep=0pt, minimum size = 4pt, outer sep=0pt}, r_edge/.style={draw=red,line width= 1,>=latex,red}, b_edge/.style={draw=blue,line width= 1,>=latex,blue}, k_edge/.style={draw=black,line width= 1,>=latex,black}, scale=1.8]
    \node[vertices, label=below:$u_1\ \ $] (u_2) at (0,0) {};
    \node[vertices, label=below:$u_2$] (v_2) at (0.5,-0.35) {};
    \node[vertices, label=above:$v_2$] (u_1) at (0.5,0.35) {};
    
    \node [vertices, label=below:$\ \ \ y_1$] (y_1) at (1,0) {};
    \node [vertices, label=below:$x_2$] (x_2) at (2,0) {};
    \node [] at (2.5,0) {$\dots$};
    \node [vertices, label=below:$y_{k-1}$] (y_k-1) at (3,0) {};
    \node [vertices, label=below:$x_k\ \ $] (x_k) at (4,0) {};
    \node [vertices, label=below:$\ \ \ y_k$] (y_k) at (5,0) {};
    \node[vertices, label=below:$v_1$] (v_1) at (6,0) {};
    \node [label=below:$R$] at (5.5,0) {};
    
    \path [r_edge] (v_2) edge[out=180,in=-80] (u_2);
    \path [r_edge] (u_2) edge[out=80,in=180] (u_1);
    \path [b_edge] (u_1) edge[out=0,in=100] (y_1);
    \path [r_edge] (v_2) edge[out=0, in=-100] (y_1);
    \path [b_edge] (v_2) edge node[pos=0.5, left,xshift=0.07cm] {$\color{black} f_2$} (u_1);
    
    \path [b_edge] (y_1) edge[bend left=30, looseness=00,looseness=0] (x_2);
    \path [r_edge] (y_1) edge[bend right=30, looseness=00,looseness=0] (x_2);
    \path [b_edge] (y_k-1) edge[bend left=30, looseness=00,looseness=0] (x_k);
    \path [r_edge] (y_k-1) edge[bend right=30, looseness=00,looseness=0] (x_k);
    \path [b_edge] (x_k) edge[bend left=80] node[pos=0.5,above]{\color{black}$P_k$} (y_k);
    \path [r_edge] (x_k) edge[bend right=80] node[pos=0.5,below]{\color{black}$Q_k$}(y_k);
    \path [b_edge] (y_k) edge[bend left=30, looseness=0] (v_1);
    \path [r_edge] (y_k) edge[bend right=30, looseness=0] (v_1);
    \path [b_edge] (u_2) edge[out=140, in=130, looseness=0.7] node [pos=0.5,below]{\color{black}$f_1$} (v_1);
    
    \end{tikzpicture}
    \caption{Case 2 in the proof of Lemma \ref{lem:disjointcyclesplustwoedgestwopaths} where $u_1,u_2,v_2\in V(C_1)$ and $v_1\not\in V(C_1)$. This figure depicts the case where $v_1\not\in \bigcup_{i=2}^k V(C_i)$. If $v_1\in \bigcup_{i=2}^k V(C_i)$, then $v_1=y_k$ and $R$ is empty. } 
    \label{fig:case:uuvinC1}
    \end{figure}
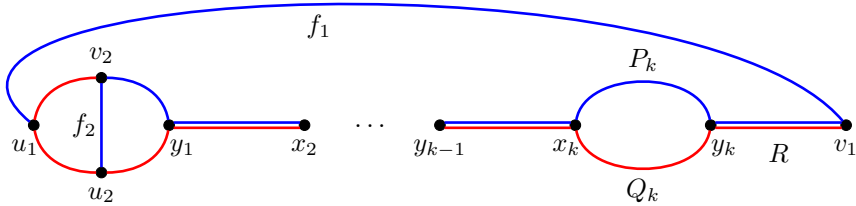

    \item[Case 3] Every cycle of $\mathcal{C}$ contains at most two of the endpoints of $f_1,f_2$.

    Let us first show that every endpoint of $f_1,f_2$ is in some cycle of $\mathcal{C}$.
    Suppose for instance that $u_1\not\in V(\mathcal{C})$. 
    Then we apply Lemma \ref{lem:disjointcyclesplusedgeandcommonendpointtwopaths} to obtain two paths $P,Q$ odd-covering $\{f_2\}\cup E(\mathcal{C})$ with a common endpoint $z$ chosen as follows: if $v_1\in  V(\mathcal{C})$, then let $z=v_1$ and define $R=\emptyset$ (note that the exceptional case of Lemma \ref{lem:disjointcyclesplusedgeandcommonendpointtwopaths} does not occur because of the assumption that every cycle of $\mathcal{C}$ contains at most two of the endpoints of $f_1,f_2$); otherwise, if $v_1\not\in  V(\mathcal{C})$, then let $z\not\in \{u_2,v_2\}$ be an arbitrary vertex in $V(\mathcal{C})$ that does not form an exceptional case, and let $R=zv_1$. Then the two paths $P\cup R \cup \{f_1\}$ and $Q\cup R$ odd-cover $\{f_1,f_2\} \cup  E(\mathcal{C})$.

    So we may assume that $\{u_1,v_1,u_2,v_2\}\subseteq V(\mathcal{C})$. We now consider two further cases: either there are two cycles of $\mathcal{C}$ each containing exactly two of the endpoints of $f_1,f_2$, or there is at most one cycle of $\mathcal{C}$ that contains two of the endpoints of $f_1,f_2$.

    \begin{description}

    \item[Case 3.1] At most one cycle of $\mathcal{C}$ contains two of $u_1,v_1,u_2,v_2$.

    Since $\{u_1,v_1,u_2,v_2\}\subseteq  V(\mathcal{C})$, this implies that there is a cycle in $\mathcal{C}$ containing exactly one of $u_1,v_1,u_2,v_2$; say $V(C_k)\cap \{u_1,u_2,v_1,v_2\}=\{u_1\}$. 
    See Figure \ref{fig:reroutecommonedge}.
    Since every cycle of $\mathcal{C}$ has at most two of $u_1,v_1,u_2,v_2$, it follows that $k\geq 3$.
    By Lemma \ref{lem:disjointcyclesplusedgeandcommonendpointtwopaths}, 
    there exist two paths $P,Q$ odd-covering $\{f_2\}\cup E(\mathcal{C}\setminus \{C_k\})$ 
    with $V(P)\cup V(Q)\subseteq V(\mathcal{C}\setminus \{C_k\})$ 
    and with $v_1$ as a common endpoint (again, the exceptional case of Lemma \ref{lem:disjointcyclesplusedgeandcommonendpointtwopaths} does not occur due to the assumption that every cycle of $\mathcal{C}$ contains at most two of the endpoints of $f_1,f_2$). Note that $u_2,v_2$ are the only vertices of odd degree in $\{f_2\}\cup E(\mathcal{C}\setminus\{C_k\})$, so they are each one endpoint of $P$ and $Q$. Let us assume without loss of generality that $u_2$ is an endpoint of $P$ and $v_2$ is and endpoint of $Q$.
    Since $\mathcal{C}\setminus \{C_k\}$ has at least two disjoint cycles (because $k\geq 3$), there is at least one edge $e=xy$ that belongs to both $P$ and $Q$. 
    We will reroute this common edge $e$ through $C_k$ as follows.
    Let $x_k,y_k$ be distinct vertices in $V(C_k)\setminus \{u_1\}$, and let $P_k,Q_k$ denote the two paths from $x_k$ to $y_k$ in $C_k$ with $u_1\not\in V(P_k)$.
    Then the two paths
    \begin{align*}
        P' &:= (P\setminus \{e\}) \cup xx_k \cup P_k \cup y_ky \\
        Q' &:= (Q\setminus \{e\}) \cup xx_k \cup Q_k \cup y_ky
    \end{align*}
    odd-cover $\{f_2\} \cup E(\mathcal{C})$. Moreover, since $u_1 \not\in V(P_k)$, $u_1v_1\cup P'$ is a path, therefore $u_1v_1\cup P'$ and $Q'$ are two paths odd-covering $\{f_1,f_2\}\cup E(\mathcal{C})$.

    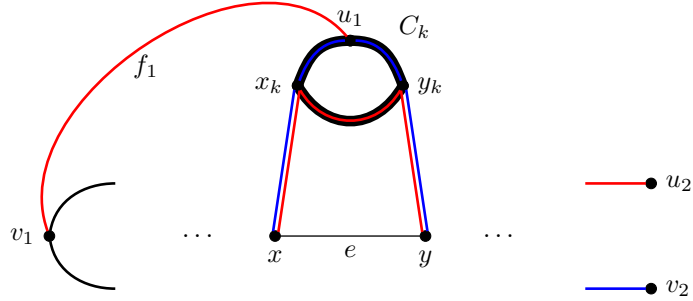
\begin{figure}
    \centering
    \begin{tikzpicture}[vertices/.style={draw, fill=black, circle, inner sep=0pt, minimum size = 4pt, outer sep=0pt}, r_edge/.style={draw=red,line width= 1,>=latex,red}, b_edge/.style={draw=blue,line width= 1,>=latex,blue}, k_edge/.style={draw=black,line width= 1,>=latex,black}, scale=2]
    \node[vertices, label=left:$v_1$] (v1) at (0,0) {};
    \node[] (v11) at (0.5,-0.35) {};
    \node[] (v12) at (0.5,0.35) {};
    
    \node [] at (1,0) {$\dots$};
    \node [vertices, label=below:$x$] (x) at (1.5,0) {};
    \node [vertices, label=below:$y$] (y) at (2.5,0) {};
    \path [draw, line width=0.5] (x) edge node [below] {$e$} (y);
    
    \node [] at (3,0) {$\dots$};
    \node[] (u') at (3.5,0.35) {};
    \node[] (v') at (3.5,-0.35) {};
    \node[vertices, label=right:$u_2$] (u2) at (4,0.35) {};
    \node[vertices, label=right:$v_2$] (v2) at (4,-0.35) {};
    \path[r_edge] (u') edge (u2);
    \path[b_edge] (v') edge (v2);
    
    \path [draw=blue,line width= 1] (v11) edge[out=180,in=-80] (v1);
    \path [draw=red,line width= 1] (v1) edge[out=80,in=180] (v12);
    
    \node[vertices, label=above:$u_1$] (u1) at (2,1.3) {};
    \node[vertices, label=left:$x_k$] (x_k) at (1.65,1) {};
    \node[vertices, label=right:$y_k$] (y_k) at (2.35,1) {};
    
    \path[draw=black!30, line width=4] (x_k.center) edge[bend right=60, looseness=1.33] (y_k.center);
    \path[draw=black!30, line width=4] (x_k.center) edge[out=70, in=180, looseness=1.2] (u1.center);
    \path[draw=black!30, line width=4] (u1.center) edge[out=0, in=110, looseness=1.2] node[pos=0.5, above right] {$C_k$}  (y_k.center);
    
    \node[vertices] at (2,1.3) {};
    \node[vertices] at (1.65,1) {};
    \node[vertices] at (2.35,1) {};
    
    \path[r_edge] (x) edge [bend right=30, looseness=0] (x_k);
    \path[b_edge] (x) edge [bend left=30, looseness=0] (x_k);
    \path[r_edge] (y_k) edge [bend right=30, looseness=0] (y);
    \path[b_edge] (y_k) edge [bend left=30, looseness=0] (y);

    \path[r_edge] (x_k) edge[bend right=60, looseness=1.2] (y_k);
    \path[b_edge] (x_k) edge[out=70, in=180] (u1);
    \path[b_edge] (u1) edge[out=0, in=110] (y_k);
    
    \path[r_edge] (v1) edge[out=110, in=130, looseness=1] node[pos=0.5, below] {\color{black}$f_1$} (u1);
    
    \end{tikzpicture}
    \caption{Case 3.1 in the proof of Lemma \ref{lem:disjointcyclesplustwoedgestwopaths} where $C_k$ contains $u_1$ and none of $\{u_2,v_1,v_2\}$. We find two paths that odd-cover $\{f_2\}\cup \bigcup_{i=1}^{k-1}C_i$, and reroute a common edge $e$ to odd-cover $\{f_1\}\cup C_k$ as well. } 
    \label{fig:reroutecommonedge}
    \end{figure}
    
        \item[Case 3.2] Two cycles of $\mathcal{C}$, say $C_1,C_2$, each contain exactly two of the endpoints of $f_1,f_2$.

        If $u_1,u_2\in V(C_1)$ and $v_1,v_2\in V(C_2)$, then $\{f_1,f_2\} \cup C_1\cup C_2$ can be alternatively decomposed similarly to Case 1.1 as follows. For $i\in[2]$ consider the two subpaths $P_i,Q_i$ of $C_i$ demarcated by the two vertices of $u_1,u_2,v_1,v_2$ in $V(C_i)$. Since $|V(C_i)|\geq 3$, we may assume without loss of generality that $P_i$ has an internal vertex. Let $C$ denote the cycle $\{f_1,f_2\}\cup P_1\cup P_2$ and let $f_1'=u_1u_2$ and $f_2'=v_1v_2$. It follows from Case 1.1 (as in Figure \ref{fig:case:uvvu}) that $\{f_1',f_2'\}\cup  E((\mathcal{C}\setminus\{C_1,C_2\})\cup \{C\})$
        can be odd-covered using two paths. By replacing the edges $f_1',f_2'$ with the paths $Q_1,Q_2$ respectively, we obtain an odd-cover of $\{f_1,f_2\}\cup E(\mathcal{C})$ using two paths.
        
        So we may assume that $u_1,v_1\in V(C_1)$ and $u_2,v_2\in V(C_2)$. See Figure \ref{fig:case:uvC1uvC2}.  
        Let $z_i \in V(C_i)\setminus \{u_i,v_i\}$ for $i\in[2]$. 
        Then by Lemma \ref{lem:disjointcyclesplusedgeandcommonendpointtwopaths}, for each $i\in[2]$, $\{f_i\}\cup E(C_i)$ can be odd-covered by two paths $P_i,Q_i$ with a common endpoint $z_i$. If $k\geq 3$, then letting $P,Q$ be two paths odd-covering $\bigcup_{i=3}^k E(C_i)$ with common endpoints $x,y$, we have that the two paths 
        \begin{align*}
            &P_1 \cup z_1x\cup P \cup yz_2\cup P_2 \qquad \text{ and} \\
            &Q_1 \cup z_1x\cup Q \cup yz_2 \cup Q_2
        \end{align*}
        odd-cover $\{f_1,f_2\}\cup  E(\mathcal{C})$.
        Otherwise, if $k=2$, then $P_1\cup z_1z_2\cup P_2$ and $Q_1\cup z_1z_2\cup Q_2$ odd-cover $\{f_1,f_2\}\cup  E(\mathcal{C})$.

        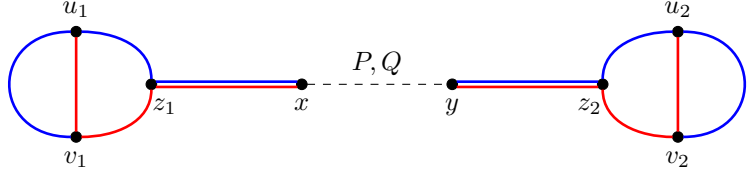
\begin{figure}[t]
        \centering
        \begin{tikzpicture}[vertices/.style={draw, fill=black, circle, inner sep=0pt, minimum size = 4pt, outer sep=0pt}, r_edge/.style={draw=red,line width= 1,>=latex,red}, b_edge/.style={draw=blue,line width= 1,>=latex,blue}, k_edge/.style={draw=black,line width= 1,>=latex,black}, scale=2]
        \node[vertices, label=below:$v_1$] (v_1) at (0.5,-0.35) {};
        \node[vertices, label=above:$u_1$] (u_1) at (0.5,0.35) {};
        
        \node [vertices, label=below:$\ \ \ z_1$] (z_1) at (1,0) {};
        \node [vertices, label=below:$x$] (x) at (2,0) {};
        \node [vertices, label=below:$y$] (y) at (3,0) {};
        \node [vertices, label=below:$z_2\ \ \ $] (z_2) at (4,0) {};
        \node[vertices, label=below:$v_2$] (v_2) at (4.5,-0.35) {};
        \node[vertices, label=above:$u_2$] (u_2) at (4.5,0.35) {};
        
        \draw[dashed] (x) edge node[pos=0.5,above]{$P,Q$} (y);
        
        \path [b_edge] (z_1) edge[out=90,in=0] (u_1);
        \path [b_edge] (u_1) edge[out=180,in=180, looseness=2] (v_1);
        \path [draw=blue,line width= 1,>=latex,red] (v_1) edge[out=0,in=-90] (z_1);
        \path [draw=blue,line width= 1,>=latex,red] (u_1) edge (v_1);
        
        \path [b_edge] (z_2) edge[out=90,in=180] (u_2);
        \path [b_edge] (u_2) edge[out=0,in=0, looseness=2] (v_2);
        \path [draw=blue,line width= 1,>=latex,red] (v_2) edge[out=180,in=-90] (z_2);
        \path [draw=blue,line width= 1,>=latex,red] (u_2) edge (v_2);
        
        \path [b_edge] (z_1) edge[bend left=30, looseness=00,looseness=0] (x);
        \path [r_edge] (z_1) edge[bend right=30, looseness=00,looseness=0] (x);
        \path [b_edge] (y) edge[bend left=30, looseness=00,looseness=0] (z_2);
        \path [r_edge] (y) edge[bend right=30, looseness=00,looseness=0] (z_2);
        
        \end{tikzpicture}
        \caption{Case 3.2 in the proof of Lemma \ref{lem:disjointcyclesplustwoedgestwopaths} where $u_1,v_1\in V(C_1)$ and $u_2,v_2\in V(C_2)$. If $k=2$, then the path between the cycles $C_1,C_2$ is replaced by the edge $z_1z_2$.} 
        \label{fig:case:uvC1uvC2}
        \end{figure}
    
    \end{description}
    
\end{description}
This completes the proof of Lemma \ref{lem:disjointcyclesplustwoedgestwopaths}.
\end{proof}

Next, we show that if both $f_1$ and $f_2$ lie in the edge set $ E(\mathcal{C})$, then the set of edges of the cycles without $f_1$ and $f_2$ can be odd-covered by two paths.

\begin{lemma}\label{lem:disjointcyclesminustwoedgestwopaths}
Let $\mathcal{C}=\{C_1,\dots,C_k\}$ be a set of vertex-disjoint cycles and let $f_1,f_2 \in  E(\mathcal{C})$ be disjoint edges.
Then $E(\mathcal{C})\setminus \{f_1,f_2\}$ can be odd-covered using 2 paths.
\end{lemma}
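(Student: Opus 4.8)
The plan is to mirror the structure of the single-edge argument in Lemma~\ref{lem:disjointcyclesminusanyedgetwopaths} and split into two cases according to whether $f_1$ and $f_2$ lie on the same cycle of $\mathcal{C}$ or on two distinct cycles. In each case I want to reduce to one of the lemmas already established and then reinsert the arcs of the affected cycles, tracking the symmetric difference throughout.

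Suppose first that $f_1,f_2$ lie on distinct cycles; after relabeling, say $f_1\in E(C_{k-1})$ and $f_2\in E(C_k)$. I would apply Lemma~\ref{lem:disjointcyclesplustwoedgestwopaths} to the smaller family $\mathcal{C}':=\mathcal{C}\setminus\{C_{k-1},C_k\}$ together with the disjoint edges $f_1,f_2$. All endpoints of $f_1,f_2$ lie in $V(C_{k-1})\cup V(C_k)$, hence outside $V(\mathcal{C}')$, so no cycle of $\mathcal{C}'$ contains an endpoint of $f_1$ or $f_2$ and the exceptional case of Lemma~\ref{lem:disjointcyclesplustwoedgestwopaths} cannot arise. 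This yields two paths $P,Q$ with $P\oplus Q=\{f_1,f_2\}\cup E(\mathcal{C}')$. Since $f_1$ and $f_2$ each lie in $P\oplus Q$, each occurs in exactly one of $P,Q$; I would then replace the edge $f_1$ by the path $C_{k-1}\setminus\{f_1\}$ and the edge $f_2$ by the path $C_k\setminus\{f_2\}$ inside whichever paths contain them, exactly as in Lemma~\ref{lem:disjointcyclesminusanyedgetwopaths}. Because $C_{k-1}$ and $C_k$ are vertex-disjoint from $\mathcal{C}'$ and from each other, the inserted arcs avoid the rest of their containing paths, so both objects remain paths, and a short computation gives the new symmetric difference $(\{f_1,f_2\}\cup E(\mathcal{C}'))\oplus E(C_{k-1})\oplus E(C_k)=E(\mathcal{C})\setminus\{f_1,f_2\}$. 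The degenerate case $k=2$ (where $\mathcal{C}'=\emptyset$) is immediate, since the two arcs $C_1\setminus\{f_1\}$ and $C_2\setminus\{f_2\}$ are already two vertex-disjoint paths.

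Now the same-cycle case, say $f_1,f_2\in E(C_k)$. Deleting the two disjoint edges from $C_k$ splits it into two vertex-disjoint arcs $A$ and $B$ that partition $V(C_k)$, and here the naive ``replace each edge by its arc'' trick fails: the arcs of $C_k\setminus\{f_1,f_2\}$ do not reconnect the endpoints of $f_1$ or of $f_2$ but instead cross between them. If $k=1$, there is nothing to do, since $A$ and $B$ are themselves two vertex-disjoint paths whose union (equivalently, symmetric difference) is $E(C_1)\setminus\{f_1,f_2\}$. If $k\geq 2$, I would first apply Lemma~\ref{lem:disjointcyclestwopaths} to $\mathcal{C}\setminus\{C_k\}$ to obtain two paths $P_0,Q_0$ odd-covering $E(\mathcal{C}\setminus\{C_k\})$ which share their endpoints $x,y$. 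Choosing an endpoint $a$ of $A$ and an endpoint $b$ of $B$, I would splice the arcs onto these paths by setting
\[
P' = A\cup ax\cup P_0\cup yb\cup B,\qquad Q' = ax\cup Q_0\cup yb.
\]
Since $V(A),V(B)\subseteq V(C_k)$ while $x,y\in V(\mathcal{C}\setminus\{C_k\})$, both $P'$ and $Q'$ are simple paths; the connecting edges $ax,yb$ lie in both and therefore cancel, so $P'\oplus Q'=E(A)\cup E(B)\cup(E(P_0)\oplus E(Q_0))=E(A)\cup E(B)\cup E(\mathcal{C}\setminus\{C_k\})=E(\mathcal{C})\setminus\{f_1,f_2\}$.

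The step I expect to require the most care is this same-cycle case: one must verify that the spliced objects $P'$ and $Q'$ genuinely remain paths with no repeated vertices, which hinges on the mutual vertex-disjointness of $A$, $B$, $P_0$ together with the junction vertices $x,y$, and on the fact that the arcs $A,B$ are nonempty (guaranteed because $f_1,f_2$ are vertex-disjoint edges of $C_k$). The symmetric-difference bookkeeping in both cases is routine but should be written out explicitly to confirm that the connecting edges cancel and that the deleted edges $f_1,f_2$ are correctly excluded.
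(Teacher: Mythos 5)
Your proof is correct, but it takes a genuinely different route in the same-cycle case. The paper avoids any case distinction: it observes that $E(\mathcal{C})\setminus\{f_1,f_2\}$ always consists of two vertex-disjoint paths $R_1,R_2$ together with $k'\in\{k-1,k-2\}$ intact cycles, replaces each $R_i$ by the shortcut edge $f_i'$ joining its endpoints, applies Lemma~\ref{lem:disjointcyclesplustwoedgestwopaths} to $\{f_1',f_2'\}$ and the remaining cycles (the exceptional case is vacuous because the endpoints of $f_1',f_2'$ lie off those cycles), and then substitutes $R_i$ back for $f_i'$. Note that when $f_1,f_2$ lie on the same cycle, the shortcut edges are the ``crossing'' chords joining the endpoints of the two arcs, not $f_1,f_2$ themselves. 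Your Case A coincides with the paper's argument restricted to distinct cycles, since there $f_i'=f_i$; but your Case B bypasses Lemma~\ref{lem:disjointcyclesplustwoedgestwopaths} altogether, splicing the two arcs $A,B$ onto the shared endpoints $x,y$ of the two paths from Lemma~\ref{lem:disjointcyclestwopaths} via connector edges $ax,yb$ that appear in both paths and hence cancel in the symmetric difference. This is sound and somewhat more elementary for that case --- it needs only the basic two-path cover of disjoint cycles plus a doubled-edge cancellation --- whereas the paper's contraction trick buys uniformity and reuses the two-edge integration machinery already in place. One small caveat, which applies equally to your proof and the paper's: the substitution and splicing steps implicitly use the fact that the paths produced by Lemmas~\ref{lem:disjointcyclestwopaths} and \ref{lem:disjointcyclesplustwoedgestwopaths} have all their vertices among $V(\mathcal{C})$ and the endpoints of the extra edges; this holds by inspection of those constructions, though it is not stated explicitly in Lemma~\ref{lem:disjointcyclesplustwoedgestwopaths}.
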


\begin{proof}
Observe that $E(\mathcal{C})\setminus \{f_1,f_2\}$ consists of a vertex-disjoint collection of two paths, say $R_1,R_2$, and $k'$ cycles, where $k'\in\{k-1,k-2\}$ (depending on whether $f_1,f_2$ belong to the same cycle). 
For $i\in[2]$ let $f_i'$ denote the edge with the same endpoints as $R_i$. By Lemma \ref{lem:disjointcyclesplustwoedgestwopaths}, the union of $\{f_1',f_2'\}$ and the remaining $k'$ cycles can be odd-covered by two paths, say $P'$ and $Q'$. By replacing the edges $f_1',f_2'$ by the paths $R_1,R_2$ respectively in $P'$ and $Q'$, we obtain an odd-cover of $E(\mathcal{C})\setminus\{f_1,f_2\}$ by two paths.
\end{proof}

Finally, we consider the case where one of the edges, $f_1$, lies in the set of cycles, and the other, $f_2$, does not.

\begin{lemma}\label{lem:disjointcyclesplusminusedgetwopaths}
Let $\mathcal{C}=\{C_1,\dots,C_k\}$ be a set of vertex-disjoint cycles and let $f_1\in  E(\mathcal{C})$ and $f_2\not\in  E(\mathcal{C})$ be disjoint edges.
Then $( E(\mathcal{C}) \setminus\{f_1\} )\cup \{f_2\}$ can be odd-covered using 2 paths.
\end{lemma}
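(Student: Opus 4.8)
The plan is to reuse the ``virtual edge and replacement'' device that already drives Lemmas~\ref{lem:disjointcyclesminusanyedgetwopaths} and~\ref{lem:disjointcyclesminustwoedgestwopaths}. Say $f_1\in E(C_j)$, write $S=C_j\setminus\{f_1\}$ for the path whose two endpoints are the endpoints of $f_1$, and set $\mathcal C'=\mathcal C\setminus\{C_j\}$. The target edge set is then $(E(\mathcal C)\setminus\{f_1\})\cup\{f_2\}=E(\mathcal C')\cup E(S)\cup\{f_2\}$. If $\mathcal C'=\emptyset$ (that is, $k=1$) this is just a path $S$ together with one more edge $f_2$, which plainly decomposes into at most two paths; so I may assume $k\geq 2$.

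First I would treat the generic situation in which $f_2$ is vertex-disjoint from $C_j$. Introduce a virtual edge $f_1'$ with the same endpoints as $S$ and apply Lemma~\ref{lem:disjointcyclesplustwoedgestwopaths} to the family $\mathcal C'$ together with the disjoint edges $f_1',f_2\notin E(\mathcal C')$. Its exceptional case cannot arise, since the endpoints of $f_1'$ lie on $C_j$ and hence on no cycle of $\mathcal C'$. Exactly one of the two resulting paths contains $f_1'$, and replacing that copy of $f_1'$ by $S$ gives the desired odd-cover; the substitution keeps the path simple because the interior vertices of $S$ lie only on $C_j$ and so occur nowhere else among the two paths.

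The crux is that this replacement can fail once $f_2$ meets $V(C_j)$: an endpoint of $f_2$ that is an interior vertex of $S$ may already lie on the path into which $S$ is substituted, creating a repeated vertex. I would handle the two remaining cases separately. If both endpoints $u_2,v_2$ of $f_2$ lie on $C_j$, then $f_2$ is a chord of $C_j$ (and $u_2,v_2$ are non-adjacent on $C_j$ because $f_2\notin E(C_j)$), so I reform the genuine cycle $C^\ast$ made of the $u_2$--$v_2$ arc of $C_j$ together with $f_2$, record the two leftover sub-paths of $S$ by pendant virtual edges $g_1,g_2$, apply Lemma~\ref{lem:disjointcyclesplustwoedgestwopaths} to $\mathcal C'\cup\{C^\ast\}$ with $g_1,g_2$, and finally restore $g_1,g_2$ to the sub-paths. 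Again every reintroduced interior vertex is fresh, so no repetition occurs.

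The last case, where $f_2$ has exactly one endpoint $u_2$ on $C_j$, is where I must work hardest to dodge the repeated vertex. Here I would first odd-cover $\{f_2\}\cup E(\mathcal C')$ by two paths $P,Q$ with a common endpoint $z\in V(\mathcal C')$, using Lemma~\ref{lem:disjointcyclesplusedgeandcommonendpointtwopaths} and arranging that $u_2$ occurs only on $P$ (this is automatic when the other endpoint of $f_2$ already lies on $\mathcal C'$, and is otherwise forced by peeling that other endpoint into $Q$). Attaching $f_1'$ to the $f_2$-free path $Q$ --- through a connector from $z$ to an endpoint of $f_1$ that is inserted into both $P$ and $Q$ so that it cancels --- places $f_1'$ and $f_2$ on different paths. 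Now replacing $f_1'$ by $S$ is safe, since $u_2\notin V(Q)$ ensures that $S$ contributes no vertex already present on the path being modified. The main obstacle throughout is precisely this bookkeeping at the shared vertex $u_2$, which is why the single-edge lemma with its freedom to prescribe the common endpoint, rather than Lemma~\ref{lem:disjointcyclesplustwoedgestwopaths} as a black box, is needed in the last case.
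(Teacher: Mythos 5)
Your proof is correct, but it takes a genuinely different route from the paper's. The paper never invokes Lemma~\ref{lem:disjointcyclesplustwoedgestwopaths} here: it splits on whether $f_2$ meets $V(C_1)$ and in every case constructs the two paths explicitly, using Lemma~\ref{lem:disjointcyclesplusedgeandcommonendpointtwopaths} when $f_2$ avoids $C_1$ and Lemma~\ref{lem:disjointcyclestwopaths} (with three sub-cases according to the location of $v_2$) when $u_2\in V(C_1)$, always stitching $S$ onto one of the paths through a doubled connector edge that cancels in the symmetric difference. You instead dispatch two of your three cases by black-boxing the two-edge integration lemma with ``virtual'' edges and substituting paths back for them afterwards --- precisely the replacement device the paper uses to prove Lemmas~\ref{lem:disjointcyclesminusanyedgetwopaths} and~\ref{lem:disjointcyclesminustwoedgestwopaths} --- and your check that the exceptional $K_4$-subdivision case never triggers is right, since in each application two of the four relevant endpoints lie on $C_j$ (or on no cycle at all) and hence never on a single cycle of the family you feed to the lemma; your chord case, which rebuilds the genuine cycle $C^\ast$ from $f_2$ and one arc of $C_j$ and records the two leftover arcs as pendant virtual edges, is a clean trick with no analogue in the paper's proof. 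Your Case 3 (doubled connector $zu_1$, with $S$ appended to the $f_2$-free path) coincides in spirit with the paper's generic case, and you correctly identify the one point of real danger: one needs $u_2\notin V(Q)$, which holds for the paths constructed in the proof of Lemma~\ref{lem:disjointcyclesplusedgeandcommonendpointtwopaths} (there $u_2$ appears only as an endpoint of $f_2$) though not from that lemma's statement alone. The same remark applies to your substitutions, which need the output paths of Lemma~\ref{lem:disjointcyclesplustwoedgestwopaths} to use no vertices outside the cycles and the given edges' endpoints; this is true of the constructions in its proof but is not asserted in its statement. Since the paper leans on exactly the same unstated property in Lemma~\ref{lem:disjointcyclesminustwoedgestwopaths}, this matches the paper's own level of rigor, but if written up it deserves a sentence. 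On balance, your approach buys modularity and brevity, at the cost of importing the (checked) non-exceptionality of a heavier lemma; the paper's buys a self-contained, fully explicit construction.
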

\begin{proof}
Let us assume without loss of generality that $f_1\in E(C_1)$, and let $S$ denote the path $C_1\setminus f_1$ from $u_1$ to $v_1$.

If neither endpoint of $f_2$ is in $V(C_1)$, then by Lemma \ref{lem:disjointcyclesplusedgeandcommonendpointtwopaths}, the edge set $\{f_2\}\cup \bigcup_{i=2}^k E(C_i)$ can be odd-covered using two paths $P$ and $Q$ with $V(P)\cup V(Q)\subseteq \{u_2,v_2\}\cup\bigcup_{i=2}^k V(C_i)$ and with a common endpoint, say $z$. Then the two paths $P\cup zu_1 \cup S$ and $Q\cup zu_1$ odd-cover $( E(\mathcal{C})\setminus\{f_1\} )\cup \{f_2\}$.

So suppose $u_2\in V(C_1)$.
Since $f_1, f_2$ are disjoint from each other, $u_2$ is an internal vertex of $S$.
Note that we have either $v_2\in V(C_1)\setminus \{u_1,v_1\}$, $v_2 \in \bigcup_{i=2}^k V(C_i)$, or $v_2\not\in  V(\mathcal{C})$.
If $k=1$, then clearly $S\cup \{f_1\}$ can be decomposed into two paths, hence odd-covered using two paths. For the remainder of the proof we assume $k\geq 2$.

By Lemma \ref{lem:disjointcyclestwopaths}, $\bigcup_{i=2}^k E(C_i)$ can be odd-covered using two paths $P$ and $Q$ with $V(P)\cup V(Q) \subseteq \bigcup_{i=2}^k V(C_i)$. Let $z_1$ and $z_2$ denote the two common endpoints of $P$ and $Q$. If $v_2\in \bigcup_{i=2}^k V(C_i)$, then we choose $P$ and $Q$ so that $z_2=v_2$; this is always possible because the endpoints of the paths are arbitrarily chosen in the proof of Lemma \ref{lem:disjointcyclestwopaths}.

Suppose $v_2\in V(C_1)$. We may assume without loss of generality that $u_1,u_2,v_2,v_1$ occur in this order on $S$.
Then the two paths
\begin{align*}
    u_2Sv_1\ \cup\ &v_1z_1 \cup P \cup z_2u_1 \text{ and} \\
    &v_1z_1 \cup Q \cup z_2u_1 \cup u_1Su_2 \cup u_2v_2
\end{align*}
odd-cover $(E(\mathcal{C}) \setminus\{f_1\} )\cup \{f_2\}$. See Figure \ref{fig:case:-f1+f2a}.

If $v_2\in \bigcup_{i=2}^k V(C_i)$ (hence $z_2=v_2$), then the two paths
\begin{align*}
    S\ \cup\ &v_1z_1 \cup P \text{ and} \\
    &v_1z_1 \cup Q\cup v_2u_2 
\end{align*}
odd-cover $(E(\mathcal{C}) \setminus\{f_1\} )\cup \{f_2\}$. See Figure \ref{fig:case:-f1+f2b}.

Finally, if $v_2\not\in  V(\mathcal{C})$, then the two paths
\begin{align*}
    S\ \cup\ &v_1z_1 \cup P \cup z_2v_2 \text{ and} \\
    &v_1z_1 \cup Q \cup z_2v_2\cup v_2u_2
\end{align*}
odd-cover $( E(\mathcal{C}) \setminus\{f_1\} )\cup \{f_2\}$. See Figure \ref{fig:case:-f1+f2c}.
\end{proof}

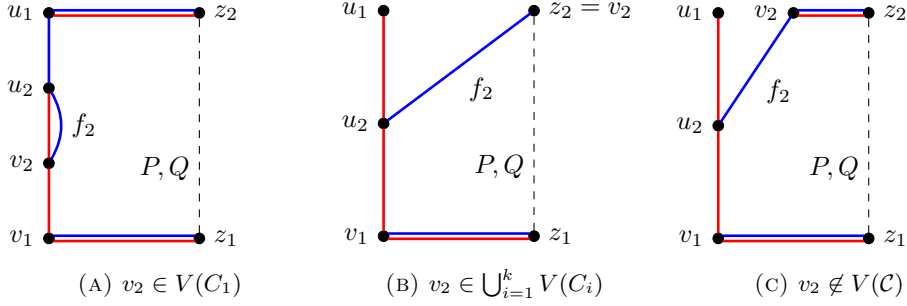
\begin{figure}[t]
\centering
\begin{subfigure}{0.31\textwidth}
\begin{tikzpicture}[vertices/.style={draw, fill=black, circle, inner sep=0pt, minimum size = 4pt, outer sep=0pt}, r_edge/.style={draw=red,line width= 1,>=latex,red}, b_edge/.style={draw=blue,line width= 1,>=latex,blue}, k_edge/.style={draw=black,line width= 1,>=latex,black}, scale=2]

\node[vertices, label=left:$v_1$] (v_1) at (0,0) {};
\node[vertices, label=left:$v_2$] (v_2) at (0,0.5) {};
\node[vertices, label=left:$u_2$] (u_2) at (0,1) {};
\node[vertices, label=left:$u_1$] (u_1) at (0,1.5) {};

\node[vertices, label=right:$z_1$] (z_1) at (1,0) {};
\node[vertices, label=right:$z_2$] (z_2) at (1,1.5) {};
\draw[dashed] (z_1) edge node[pos=0.3,left]{$P,Q$} (z_2);

\path[b_edge] (u_1) edge (u_2);
\path[b_edge] (u_2) edge[bend left=30, looseness=1] node[right]{\color{black}$f_2$} (v_2);
\path[r_edge] (u_2) edge (v_2);
\path[r_edge] (v_2) edge (v_1);

\path [b_edge] (v_1) edge[bend left=30, looseness=00,looseness=0] (z_1);
\path [r_edge] (v_1) edge[bend right=30, looseness=00,looseness=0] (z_1);
\path [b_edge] (u_1) edge[bend left=30, looseness=00,looseness=0] (z_2);
\path [r_edge] (u_1) edge[bend right=30, looseness=00,looseness=0] (z_2);

\end{tikzpicture}
\caption{$v_2\in V(C_1)$} 
\label{fig:case:-f1+f2a}
\end{subfigure}
\begin{subfigure}{0.31\textwidth}
\begin{tikzpicture}[vertices/.style={draw, fill=black, circle, inner sep=0pt, minimum size = 4pt, outer sep=0pt}, r_edge/.style={draw=red,line width= 1,>=latex,red}, b_edge/.style={draw=blue,line width= 1,>=latex,blue}, k_edge/.style={draw=black,line width= 1,>=latex,black}, scale=2]

\node[vertices, label=left:$v_1$] (v_1) at (0,0) {};
\node[vertices, label=left:$u_2$] (u_2) at (0,0.75) {};
\node[vertices, label=left:$u_1$] (u_1) at (0,1.5) {};

\node[vertices, label=right:$z_1$] (z_1) at (1,0) {};
\node[vertices, label=right:{$z_2=v_2$}] (z_2) at (1,1.5) {};
\draw[dashed] (z_1) edge node[pos=0.3,left]{$P,Q$}(z_2);

\path[r_edge] (u_1) edge (u_2);
\path[r_edge] (u_2) edge (v_1);
\path[b_edge] (z_2) edge node[below right]{\color{black}$f_2$} (u_2);

\path [b_edge] (v_1) edge[bend left=30, looseness=0] (z_1);
\path [r_edge] (v_1) edge[bend right=30, looseness=0] (z_1);

\end{tikzpicture}
\caption{$v_2\in \bigcup_{i=1}^k V(C_i)$} 
\label{fig:case:-f1+f2b}
\end{subfigure}
\begin{subfigure}{0.31\textwidth}
\begin{tikzpicture}[vertices/.style={draw, fill=black, circle, inner sep=0pt, minimum size = 4pt, outer sep=0pt}, r_edge/.style={draw=red,line width= 1,>=latex,red}, b_edge/.style={draw=blue,line width= 1,>=latex,blue}, k_edge/.style={draw=black,line width= 1,>=latex,black}, scale=2]

\node[vertices, label=left:$v_1$] (v_1) at (0,0) {};
\node[vertices, label=left:$u_2$] (u_2) at (0,0.75) {};
\node[vertices, label=left:$u_1$] (u_1) at (0,1.5) {};
\node[vertices, label=left:$v_2$] (v_2) at (0.5,1.5) {};

\node[vertices, label=right:$z_1$] (z_1) at (1,0) {};
\node[vertices, label=right:{$z_2$}] (z_2) at (1,1.5) {};
\draw[dashed] (z_1) edge node[pos=0.3,left]{$P,Q$}(z_2);

\path[r_edge] (u_1) edge (u_2);
\path[r_edge] (u_2) edge (v_1);
\path[b_edge] (v_2) edge node[below right]{\color{black}$f_2$} (u_2);

\path [b_edge] (v_1) edge[bend left=30, looseness=0] (z_1);
\path [r_edge] (v_1) edge[bend right=30, looseness=0] (z_1);
\path [b_edge] (v_2) edge[bend left=30, looseness=0] (z_2);
\path [r_edge] (v_2) edge[bend right=30, looseness=0] (z_2);

\end{tikzpicture}
\caption{$v_2\not\in V(\mathcal{C})$} 
\label{fig:case:-f1+f2c}
\end{subfigure}

\caption{The three cases in the proof of Lemma \ref{lem:disjointcyclesplusminusedgetwopaths}.} 
\label{fig:case:-f1+f2}
\end{figure}

Combining, Lemmas \ref{lem:disjointcyclesplustwoedgestwopaths}, \ref{lem:disjointcyclesminustwoedgestwopaths}, and \ref{lem:disjointcyclesplusminusedgetwopaths}, we conclude that any two edges from $M_{\odd}$ can be integrated into two paths odd-covering a set of vertex-disjoint cycles, unless the technical conditions in Lemma \ref{lem:disjointcyclesplustwoedgestwopaths} apply.

\begin{cor}\label{cor:cyclessymdifftwoedges}
    Let $\mathcal{C}=\{C_1,\dots,C_k\}$ be a set of vertex-disjoint cycles and let $f_1,f_2$ be disjoint edges. Then $\{f_1,f_2\}\oplus E(\mathcal{C})$ can be odd-covered using two paths, unless for some $j\in[k]$, $C_j\cup \{f_1,f_2\}$ is a $K_4$-subdivision and at most one of the four subpaths of $C_j$ demarcated by the endpoints of $f_1,f_2$ has an internal vertex.
\end{cor}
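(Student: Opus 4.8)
The plan is to treat Corollary~\ref{cor:cyclessymdifftwoedges} as a dispatch over the three integration lemmas, according to which of $f_1,f_2$ already belong to $E(\mathcal{C})$. The starting observation is that the shape of the symmetric difference $\{f_1,f_2\}\oplus E(\mathcal{C})$ is completely determined by these membership relations: an edge $f_i\notin E(\mathcal{C})$ is ``added'' to $E(\mathcal{C})$, while an edge $f_i\in E(\mathcal{C})$ is ``removed''. I would therefore split into four cases according to whether each $f_i$ lies in $E(\mathcal{C})$, and in each case read off the corresponding lemma.

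If $f_1,f_2\notin E(\mathcal{C})$, then $\{f_1,f_2\}\oplus E(\mathcal{C})=\{f_1,f_2\}\cup E(\mathcal{C})$, and Lemma~\ref{lem:disjointcyclesplustwoedgestwopaths} yields two paths unless its exceptional configuration occurs. If $f_1,f_2\in E(\mathcal{C})$, then $\{f_1,f_2\}\oplus E(\mathcal{C})=E(\mathcal{C})\setminus\{f_1,f_2\}$, and Lemma~\ref{lem:disjointcyclesminustwoedgestwopaths} applies with no exception. In the mixed cases, say $f_1\in E(\mathcal{C})$ and $f_2\notin E(\mathcal{C})$ (the other being symmetric after relabelling), we have $\{f_1,f_2\}\oplus E(\mathcal{C})=(E(\mathcal{C})\setminus\{f_1\})\cup\{f_2\}$, and Lemma~\ref{lem:disjointcyclesplusminusedgetwopaths} again applies with no exception. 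Since $f_1,f_2$ are disjoint by hypothesis, the disjointness requirements of all three lemmas are satisfied.

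It then remains to verify that the exceptional clause in the corollary matches exactly the situations in which two paths are not produced. The only one of the three lemmas carrying an exception is Lemma~\ref{lem:disjointcyclesplustwoedgestwopaths}, and its exceptional configuration is verbatim the one named in the corollary, so I would argue that this configuration can arise only in the first case, $f_1,f_2\notin E(\mathcal{C})$. Indeed, for $C_j\cup\{f_1,f_2\}$ to be a subdivision of $K_4$, the four vertices of degree three must be the four distinct endpoints of $f_1,f_2$, all lying on $C_j$, with $f_1$ and $f_2$ appearing as the two crossing chords; in particular neither $f_1$ nor $f_2$ can be an edge of $C_j$. Moreover, since the cycles of $\mathcal{C}$ are vertex-disjoint, any edge of $E(\mathcal{C})$ with an endpoint on $C_j$ is already an edge of $C_j$, so having an endpoint of $f_i$ on $C_j$ together with $f_i\in E(\mathcal{C})$ would force $f_i\in E(C_j)$, which is excluded. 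Hence the $K_4$-subdivision condition forces $f_1,f_2\notin E(\mathcal{C})$, so the exception is confined to the first case. The corollary's clause in fact drops the hypothesis $k\ge 2$ present in Lemma~\ref{lem:disjointcyclesplustwoedgestwopaths}; this merely enlarges the stated exception and is harmless, since whenever the corollary's clause fails so does the exceptional hypothesis of the lemma, and then the appropriate lemma always produces two paths.

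The only real obstacle is this last bookkeeping step: ensuring that the $K_4$-subdivision exception cannot leak into the ``removed-edge'' cases. Vertex-disjointness of $\mathcal{C}$ does the work here, preventing a degree-three vertex of the putative $K_4$ from lying on two cycles and forcing each $f_i$ to be a genuine chord rather than a cycle edge. Once this is in place, the four cases are exhaustive and the corollary follows immediately from the three lemmas.
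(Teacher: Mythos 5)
Your proposal is correct and takes essentially the same approach as the paper, which derives the corollary exactly by combining Lemmas \ref{lem:disjointcyclesplustwoedgestwopaths}, \ref{lem:disjointcyclesminustwoedgestwopaths}, and \ref{lem:disjointcyclesplusminusedgetwopaths} according to whether $f_1,f_2$ lie in $E(\mathcal{C})$, with the exceptional clause inherited solely from the first of these lemmas. Your additional bookkeeping---that the $K_4$-subdivision condition forces $f_1,f_2\notin E(\mathcal{C})$, and that dropping the $k\ge 2$ hypothesis only enlarges the stated exception harmlessly---is sound and simply makes explicit what the paper leaves implicit.
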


It remains to address this technical condition. We do so by showing that as long as there are three edges $f_1,f_2,f_3 \in M_{\odd}$, we can {\em choose} a pair to use in a two-path odd-cover (because it does not satisfy the technical conditions). In fact, out of the three unordered pairs of edges in $\{f_1,f_2,f_3\}$, at most one pair can form the exceptional case.

\begin{lemma}
\label{lem:disjointcycleschoosetwoedges}
Let $\mathcal{C}$ be a set of vertex-disjoint cycles, and let $f_1,f_2,f_3$ be disjoint edges. 
Then for at least two unordered pairs $\{f_a,f_b\}\subseteq \{f_1,f_2,f_3\}$, we have that $\{f_a,f_b\}\oplus E(\mathcal{C})$ can be odd-covered using two paths.
\end{lemma}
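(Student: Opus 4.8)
The plan is to reduce everything to Corollary~\ref{cor:cyclessymdifftwoedges}, which says that a pair $\{f_a,f_b\}$ can fail to admit a two-path odd-cover of $\{f_a,f_b\}\oplus E(\mathcal C)$ only when it is \emph{exceptional}: there is a cycle $C_j\in\mathcal C$ such that $C_j\cup\{f_a,f_b\}$ is a $K_4$-subdivision and at most one of the four subpaths of $C_j$ demarcated by the four endpoints has an internal vertex. So it suffices to prove that at most one of the three pairs among $f_1,f_2,f_3$ is exceptional; the remaining (at least) two pairs then give the desired two-path odd-covers.

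The key structural observation is that the $K_4$-subdivision condition is a \emph{crossing} condition on chords of a cycle. I would first record that if $\{f_a,f_b\}$ is exceptional with witnessing cycle $C$, then all four endpoints lie on $C$ and, since $C\cup\{f_a,f_b\}$ is a $K_4$-subdivision, they occur in the alternating cyclic order that makes $f_a,f_b$ crossing chords of $C$; moreover the ``at most one arc has an internal vertex'' clause means at least three of the four arcs between consecutive endpoints are single edges of $C$.

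Now suppose $\{f_1,f_2\}$ is exceptional, with witnessing cycle $C$ and endpoints occurring in the crossing cyclic order $u_1,u_2,v_1,v_2$, dividing $C$ into arcs $A_1=(u_1,u_2)$, $A_2=(u_2,v_1)$, $A_3=(v_1,v_2)$, $A_4=(v_2,u_1)$, at most one of which has an internal vertex. I would show neither $\{f_1,f_3\}$ nor $\{f_2,f_3\}$ can be exceptional. For either of these to be exceptional, its four endpoints must lie on a common cycle; since $u_1,v_1$ (resp.\ $u_2,v_2$) lie on $C$ and the cycles of $\mathcal C$ are vertex-disjoint, that cycle must be $C$, so in particular $u_3,v_3\in V(C)$ (if not, both pairs involving $f_3$ are immediately non-exceptional). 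Because $f_3$ is disjoint from $f_1,f_2$, the vertices $u_3,v_3$ are distinct from $u_1,v_1,u_2,v_2$, hence each is internal to one of $A_1,\dots,A_4$; and since at most one arc has an internal vertex, both $u_3$ and $v_3$ lie in the \emph{same} arc $A_\ell$. The punchline is that each arc $A_\ell$ lies entirely on one side of the chord $f_1$ (its endpoints $u_1,v_1$ are arc-endpoints, so $A_1,A_2$ lie on one side of $f_1$ and $A_3,A_4$ on the other) and, likewise, entirely on one side of $f_2$ ($A_2,A_3$ versus $A_4,A_1$). Hence $u_3,v_3$ lie on a common side of $f_1$ and on a common side of $f_2$, so the chord $f_3$ crosses neither; consequently neither $C\cup\{f_1,f_3\}$ nor $C\cup\{f_2,f_3\}$ is a $K_4$-subdivision, and neither pair is exceptional.

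This shows at most one of the three pairs is exceptional, which finishes the proof via Corollary~\ref{cor:cyclessymdifftwoedges}. The step I expect to require the most care is precisely this last bookkeeping: translating the ``at most one arc has an internal vertex'' clause into the assertion that $u_3,v_3$ share a single arc, and then arguing that a single arc sits on one side of each of the two crossing chords $f_1,f_2$. Here the precise meanings of ``$K_4$-subdivision'' and ``subpath demarcated by the endpoints'' must be invoked carefully, along with the easy but necessary observation that the case $u_3,v_3\notin V(C)$ makes the pairs involving $f_3$ automatically non-exceptional.
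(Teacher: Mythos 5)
Your proof is correct and follows essentially the same route as the paper's: both reduce to Corollary~\ref{cor:cyclessymdifftwoedges} and argue that if one pair $\{f_a,f_b\}$ is exceptional with witness cycle $C$, then neither pair involving the third edge can be exceptional. Your write-up simply makes explicit the steps the paper compresses into ``it then follows'' — namely that by vertex-disjointness the only possible witness cycle for a pair involving $f_3$ is $C$ itself, and that both endpoints of $f_3$ would then have to be internal to a single demarcated subpath of $C$, so $f_3$ crosses neither chord and no $K_4$-subdivision arises.
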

\begin{proof}
Let $a,b\in[3]$ and suppose that $\{f_a,f_b\}\oplus E(\mathcal{C})$ cannot be odd-covered using 2 paths. Let $f_c\in \{f_1,f_2,f_3\}\setminus \{f_a,f_b\}$.
By Corollary \ref{cor:cyclessymdifftwoedges}, we have for some $C\in \mathcal{C}$ that $C \cup \{f_a,f_b\}$ is a subdivision of $K_4$ with at most one of the four subpaths of $C$ demarcated by the endpoints of $f_a,f_b$ having an internal vertex.
It then follows that neither $C \cup \{f_a,f_c\}$ nor $C \cup \{f_b,f_c\}$ forms a $K_4$-subdivision. Hence, by Corollary \ref{cor:cyclessymdifftwoedges}, both $\{f_a,f_c\}\oplus E(\mathcal{C})$ and $\{f_b,f_c\}\oplus E(\mathcal{C})$ can be odd-covered using two paths.
\end{proof}

If there are exactly four edges $f_1,f_2,f_3,f_4$ in $M_{\odd}$, then we can integrate two at a time to two sets $\mathcal{C},\mathcal{D}$ of vertex-disjoint cycles. However, if for example we integrate $f_1,f_2$ into two paths odd-covering $\{f_1,f_2\}\oplus E(\mathcal{C})$, then $f_3,f_4$, and $\mathcal{D}$ may form an exceptional case. This can be avoided by choosing a different pair of edges to integrate into $\mathcal{C}$.

\begin{lemma} \label{lem:fouredgestworounds4paths}
Let $\mathcal{C}$ and $\mathcal{D}$ be sets of vertex-disjoint cycles such that $E(\mathcal{C})\cap E(\mathcal{D})=\emptyset$, and let $f_1,f_2,f_3,f_4$ be disjoint edges. 
Then $\{f_1,f_2,f_3,f_4\}\oplus E(\mathcal{C}) \oplus E(\mathcal{D})$ can be odd-covered using 4 paths.
\end{lemma}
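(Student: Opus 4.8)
The plan is to split the four edges into two disjoint pairs $A \sqcup B = \{f_1,f_2,f_3,f_4\}$, integrate one pair into $\mathcal{C}$ and the other into $\mathcal{D}$ using two paths each via Corollary \ref{cor:cyclessymdifftwoedges}, for a total of four paths. This works cleanly at the level of symmetric differences: if two paths odd-cover $A \oplus E(\mathcal{C})$ and two more odd-cover $B \oplus E(\mathcal{D})$, then the symmetric difference of all four paths is $(A \oplus E(\mathcal{C})) \oplus (B \oplus E(\mathcal{D})) = (A \oplus B) \oplus (E(\mathcal{C}) \oplus E(\mathcal{D})) = \{f_1,f_2,f_3,f_4\} \oplus E(\mathcal{C}) \oplus E(\mathcal{D})$, using that $A$ and $B$ partition the four edges. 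So the entire task reduces to choosing the pairing $\{A,B\}$ so that neither pair triggers the exceptional case of Corollary \ref{cor:cyclessymdifftwoedges} on its assigned cycle set.

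Call a pair $\{f_a,f_b\}$ \emph{$\mathcal{C}$-bad} if $\{f_a,f_b\} \oplus E(\mathcal{C})$ falls into the exceptional case of Corollary \ref{cor:cyclessymdifftwoedges}, and define \emph{$\mathcal{D}$-bad} analogously. The key structural observation I would establish first is that the $\mathcal{C}$-bad pairs among $f_1,\dots,f_4$ form a matching. Indeed, if two $\mathcal{C}$-bad pairs shared an edge, say $\{f_a,f_b\}$ and $\{f_a,f_c\}$, then the triple $\{f_a,f_b,f_c\}$ would contain two $\mathcal{C}$-bad pairs, contradicting Lemma \ref{lem:disjointcycleschoosetwoedges}. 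The same reasoning shows the $\mathcal{D}$-bad pairs form a matching.

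The remainder is a short counting argument over the three pairings $\{f_1f_2 \mid f_3f_4\}$, $\{f_1f_3 \mid f_2f_4\}$, $\{f_1f_4 \mid f_2f_3\}$ of the four edges. On a four-element set, every matching is contained in a unique one of these three pairings, since each $2$-subset occurs as a part of exactly one pairing. Hence all $\mathcal{C}$-bad pairs lie inside a single pairing $\Pi_C$, and all $\mathcal{D}$-bad pairs lie inside a single pairing $\Pi_D$. As at most two of the three pairings are thereby excluded, there is a pairing $\{A,B\}$ distinct from both $\Pi_C$ and $\Pi_D$; and because distinct pairings share no common part, neither $A$ nor $B$ is $\mathcal{C}$-bad or $\mathcal{D}$-bad. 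Integrating $A$ into $\mathcal{C}$ and $B$ into $\mathcal{D}$ via Corollary \ref{cor:cyclessymdifftwoedges} then yields the four desired paths.

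The genuinely hard work is already packaged into the earlier results: controlling the exceptional $K_4$-subdivision case is the content of Lemma \ref{lem:disjointcyclesplustwoedgestwopaths} and Corollary \ref{cor:cyclessymdifftwoedges}, and the statement that exceptional pairs are scarce is exactly Lemma \ref{lem:disjointcycleschoosetwoedges}. Given those, the only obstacle here is the combinatorial bookkeeping, and the main point to get right is the matching property above — that no edge lies in two bad pairs — since it is what forces the bad pairs to concentrate in a single pairing and leaves a safe third pairing to use.
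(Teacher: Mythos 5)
Your overall architecture is right, and it is in fact the same strategy as the paper's proof: use Lemma~\ref{lem:disjointcycleschoosetwoedges} to show bad pairs are scarce, then select a partition of $\{f_1,f_2,f_3,f_4\}$ into one pair for $\mathcal{C}$ and one pair for $\mathcal{D}$; your symmetric-difference bookkeeping at the start is also correct. But there is one genuine logical slip, in your proof of the matching property. You define a pair to be $\mathcal{C}$-bad when it \emph{falls into the exceptional case} of Corollary~\ref{cor:cyclessymdifftwoedges}, and then claim that two $\mathcal{C}$-bad pairs sharing an edge would ``contradict Lemma~\ref{lem:disjointcycleschoosetwoedges}.'' That deduction is invalid: Lemma~\ref{lem:disjointcycleschoosetwoedges} says that in any triple at most one pair \emph{cannot be odd-covered by two paths}; it says nothing about how many pairs satisfy the exceptional $K_4$-subdivision condition. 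The ``unless'' in Corollary~\ref{cor:cyclessymdifftwoedges} is one-directional --- an exceptional pair is merely one for which the corollary gives no guarantee, not one that is provably non-coverable --- so two exceptional pairs sharing an edge is perfectly consistent with the statement of Lemma~\ref{lem:disjointcycleschoosetwoedges}. (The matching property for exceptional pairs is in fact true, but proving it requires the geometric argument inside the proof of that lemma: vertex-disjointness of the cycles forces both exceptional pairs to involve the same cycle $C_j$, and then both endpoints of the third edge lie on a single demarcated subpath of $C_j$, so no second $K_4$-subdivision can form. That argument is not available to you if you use the lemma only as a black box.)

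The repair is a one-line change, after which your proof is correct: define a pair to be $\mathcal{C}$-bad when $\{f_a,f_b\}\oplus E(\mathcal{C})$ cannot be odd-covered by two paths. With this definition, Lemma~\ref{lem:disjointcycleschoosetwoedges} directly gives that no edge lies in two $\mathcal{C}$-bad pairs, your counting over the three pairings goes through unchanged, and the final step no longer needs Corollary~\ref{cor:cyclessymdifftwoedges} at all, since a pair that is not bad is odd-coverable by definition. The resulting argument is essentially the paper's: the paper applies Lemma~\ref{lem:disjointcycleschoosetwoedges} once to $f_1,f_2,f_3$ with $\mathcal{C}$ and once to $f_2,f_3,f_4$ with $\mathcal{D}$, and extracts a compatible pairing by a short case analysis --- exactly what your three-pairings count accomplishes in slightly more systematic form.
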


\begin{proof}
    By Lemma \ref{lem:disjointcycleschoosetwoedges} applied to $\mathcal{C}$ and $f_1,f_2,f_3$, we may assume without loss of generality that both  $\{f_1,f_2\}\oplus E(\mathcal{C})$ and $\{f_1,f_3\}\oplus E(\mathcal{C})$ can be odd-covered using two paths. 
    By Lemma \ref{lem:disjointcycleschoosetwoedges} applied to $\mathcal{D}$ and $f_2,f_3,f_4$, we have for some $f_a\in\{f_2,f_3\}$ that $\{f_a,f_4\}\oplus E(\mathcal{D})$ can be odd-covered using two paths. 
\end{proof}

\subsubsection{An iterative odd-covering procedure.}

We are ready to tie our results together. First we prove a bound roughly of the form $p_2(G) \leq \frac{v_\odd(G)}{2} + 2$ for certain graphs $G$ with $\Delta(G) \leq \frac{v_\odd(G)}{2}$.

\begin{lem}\label{lem:Eulerianplusmatching}
    Let $G$ be subgraph of $K_n$ with edge set $E(G) = M \oplus E(G')$, where $M \subseteq E(K_n)$ is a matching and $G'$ is an Eulerian graph of maximum degree $\Delta(G') \leq 2t$, where
    \[t := \left\{\begin{array}{c l}
\left \lceil\frac{|M|}{2} \right \rceil & \text{if } |M| \neq 2;\\
 2 & \text{if } |M| = 2.
\end{array}\right.\]
Then $p_2(G) \leq 2t$.
    
\end{lem}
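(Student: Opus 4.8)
The plan is to realize the matching $M$ and the Eulerian part $G'$ simultaneously, processing $G'$ in ``layers'' of vertex-disjoint cycles while distributing the edges of $M$ at most two per layer, so that each layer costs two paths (one ``round'') and the whole cover fits in at most $t$ rounds.

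First I would decompose $G'$. Since $G'$ is Eulerian, Lemma~\ref{lem:disjointcyclesmaxdegvertices} gives a set $\mathcal{C}$ of vertex-disjoint cycles meeting every maximum-degree vertex; deleting $E(\mathcal{C})$ keeps the graph Eulerian (each vertex loses $0$ or $2$ edges) and lowers the maximum degree by exactly $2$. Iterating, I obtain edge-disjoint cycle-sets $\mathcal{C}_1,\dots,\mathcal{C}_\ell$ with $\ell = \Delta(G')/2 \le t$ and
\[ E(G) = M \oplus \bigoplus_{i=1}^{\ell} E(\mathcal{C}_i), \]
where $M$ is a matching of disjoint edges with $|M| \le 2t$ and $\lceil |M|/2\rceil \le t$.

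The heart of the argument is to absorb the edges of $M$ into these rounds without ever being blocked by the exceptional $K_4$-subdivision case of Corollary~\ref{cor:cyclessymdifftwoedges}. I would group the layers into pairs (leaving one layer over if $\ell$ is odd) and, if $\ell<t$, adjoin $t-\ell$ empty ``trivial'' slots, each able to host two disjoint edges as two single-edge paths. A pair of layers $(\mathcal{C},\mathcal{D})$ can always absorb up to four edges of $M$ using exactly four paths and never hitting the exceptional case: for two edges, split them as singletons via Corollary~\ref{cor:cyclesoplusoneedge}; for three edges, use Lemma~\ref{lem:disjointcycleschoosetwoedges} to place a good pair in one layer and a singleton in the other; for four edges, invoke Lemma~\ref{lem:fouredgestworounds4paths} directly. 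A leftover single layer is given at most one edge as a singleton, or, whenever at least three edges remain, a good pair supplied by Lemma~\ref{lem:disjointcycleschoosetwoedges}. Distributing the $|M|\le 2t$ edges greedily into the layer-pairs (four each), then the leftover layer, then the trivial slots, a short count shows that the number of rounds is $\max\{\ell,\lceil |M|/2\rceil\}\le t$, so at most $2t$ paths are used.

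The main obstacle is exactly the exceptional case of Corollary~\ref{cor:cyclessymdifftwoedges}: a lonely cycle-set forced to host a pair of edges that forms a degenerate $K_4$-subdivision with it. Pairing the layers and appealing to the three- and four-edge lemmas removes this danger in every configuration except the tightest one, namely $|M|=2$ with a single cycle-set; there, integrating the two edges together may be impossible, while splitting them costs three paths instead of two. This is precisely why the definition of $t$ is bumped from $\lceil|M|/2\rceil=1$ to $2$ when $|M|=2$, which furnishes the extra round needed. I would close by checking the few small boundary cases (say $|M|\in\{0,1,2\}$ or $\ell\le 1$) directly to confirm that the bound $2t$ is never exceeded.
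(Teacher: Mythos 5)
Your overall strategy is the same as the paper's: decompose $G'$ into at most $t$ layers of vertex-disjoint cycles via Lemma~\ref{lem:disjointcyclesmaxdegvertices}, spend two paths per layer, and absorb the edges of $M$ along the way using Corollary~\ref{cor:cyclesoplusoneedge}, Lemma~\ref{lem:disjointcycleschoosetwoedges}, and Lemma~\ref{lem:fouredgestworounds4paths}, with the bumped value of $t$ at $|M|=2$ covering the unavoidable exception. However, your distribution scheme has a genuine gap. Take $\ell = t$ odd (say $t=3$) and $|M| = 2t$: your layer-pairs absorb $4\lfloor \ell/2\rfloor = 2t-2$ edges, so exactly two edges remain for the lone leftover layer. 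At that moment fewer than three edges are unassigned, so neither of your rules for the leftover layer applies: giving it one edge as a singleton leaves an edge homeless, so your claimed count of $\max\left\{\ell,\lceil |M|/2\rceil\right\}$ rounds fails, while giving it both edges can hit the exceptional case of Corollary~\ref{cor:cyclessymdifftwoedges} --- for instance, the two matching edges could be the two diagonals of a $4$-cycle in that layer (a $K_4$-subdivision with no internal vertices on the subpaths), and that layer may well contain a second cycle. So your claim that the only dangerous configuration is ``$|M|=2$ with a single cycle-set'' is false; the danger recurs whenever a lone leftover layer is forced to host exactly two edges, which happens precisely in the case above.

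The gap is reparable within your framework, and the repair is essentially what the paper does: never allow the process to reach the state ``two edges left, one layer left'' unless $|M|=2$ from the outset. The paper's procedure integrates two edges per layer (via Lemma~\ref{lem:disjointcycleschoosetwoedges}) as long as $|M_i|\geq 5$ or $|M_i|=3$, and the moment exactly four edges remain it spends Lemma~\ref{lem:fouredgestworounds4paths} on the \emph{last two} layers; when $|M|$ is odd the process instead terminates with a single edge, handled by Corollary~\ref{cor:cyclesoplusoneedge}. Equivalently, in your language: serve the leftover single layer \emph{first}, while at least three edges are still unassigned, so that Lemma~\ref{lem:disjointcycleschoosetwoedges} can hand it a pair avoiding the exceptional case; only then fill the layer-pairs, which accept any four or fewer disjoint edges without exception. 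With that reordering your counting goes through and your argument coincides with the paper's.
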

\begin{proof}
By a repeated application of Lemma \ref{lem:disjointcyclesmaxdegvertices}, we obtain a sequence $\mathcal{C}_1,\dots, \mathcal{C}_{t}$ of (possibly empty) sets of vertex-disjoint cycles which together form a partition of $E(G')$.

We iteratively find two paths that odd-cover $E(\mathcal{C}_i)$ and integrate up to two edges of $M$ as follows.
If $|M|$ is odd, then we repeatedly apply Lemma \ref{lem:disjointcycleschoosetwoedges} until we are left with a single edge, which we integrate using Corollary \ref{cor:cyclesoplusoneedge}.
If $|M|$ is even and $|M|\geq 4$, then we again apply Lemma \ref{lem:disjointcycleschoosetwoedges} repeatedly to integrate two edges at a time until we are left with exactly four edges, which we integrate into two sets of vertex-disjoint cycles using Lemma \ref{lem:fouredgestworounds4paths}.
If $|M|=2$, then we integrate one edge at a time into two sets of vertex-disjoint cycles using Corollary \ref{cor:cyclesoplusoneedge}. If $|M| = 0$, then there's nothing to prove.

More precisely, set $i=1$ and $M_1=M$. 
We repeatedly apply one of the following steps as long as $i\leq t$:
\begin{itemize}
    \item If $|M_i|\geq 5$ or $|M_i|=3$, then apply Lemma \ref{lem:disjointcycleschoosetwoedges} to obtain two paths $P_i,Q_i$ such that $P_i\oplus  Q_i = \{f_a,f_b\}\oplus  E(\mathcal{C}_i)$ for some $f_a,f_b\in M_i$. Update $M_{i+1}=M_i\setminus\{f_a,f_b\}$ and $i=i+1$.
    \item If $|M_i|=4$, then we have $i\leq t-1$. Apply Lemma \ref{lem:fouredgestworounds4paths} to find four paths $P_i,Q_i,P_{i+1},Q_{i+1}$ such that $$P_i\oplus  Q_i\oplus  P_{i+1}\oplus  Q_{i+1} = M_i \oplus  E(\mathcal{C}_i) \oplus  E(\mathcal{C}_{i+1}).$$
    Update $M_{i+2}=\emptyset$ and $i=i+2$.
    \item If $|M_i|=1$, then apply Corollary \ref{cor:cyclesoplusoneedge} to obtain two paths $P_i,Q_i$ such that $P_i\oplus Q_i = M_i \oplus E(\mathcal{C}_i)$. Update $M_{i+1}=\emptyset$ and $i=i+1$.
    \item If $|M_i|=2$, then observe that $i=1$ since none of the above steps leaves $M_i$ with exactly two edges. This implies $|M| = t = 2$. Apply Corollary \ref{cor:cyclesoplusoneedge} to $(f_1, \mathcal{C}_1)$ and to $(f_2,\mathcal{C}_2)$ to obtain four paths $P_1,Q_1,P_2,Q_2$ such that $P_1\oplus Q_1 = \{f_1\}\oplus E(\mathcal{C}_1)$ and $P_2\oplus Q_2=\{f_2\}\oplus E(\mathcal{C}_2)$. Update $M_{i+2}=\emptyset$ and $i=i+2$.
    \item If $M_i=\emptyset$, then we already have $i > t$.
    
\end{itemize}
After this procedure, we obtain a sequence of paths $P_1,Q_1,P_2,Q_2,\dots,P_{t},Q_{t}$ such that 
\begin{align*}
    \bigoplus_{i=1}^{t} (P_i\oplus Q_i) = M\oplus \bigcup_{i=1}^{t} E(\mathcal{C}_i) = M\oplus E(G') =  E(G).
\end{align*}
Hence, $\{P_1,Q_1,\dots,P_{t},Q_{t}\}$ is a set of $2t$ paths that odd-cover $E(G)$.
\end{proof}

\subsubsection{Improved bounds on $p_2(G)$ and $p_{2,\iso}(G)$}

We are now ready to prove Theorem \ref{thm:upperboundDeltaVSo2} claiming the existence of an odd-cover of at most $\max\{\frac{v_{\odd}}{2}, 2\left\lceil \frac{\Delta}{2} \right\rceil\}$ paths.

\DeltaVSodd*

\begin{proof}[Proof of Theorem \ref{thm:upperboundDeltaVSo2}]
Let $\Delta_e=2\left\lceil \frac{\Delta(G)}{2} \right\rceil$.
First suppose that $\frac{v_{\odd}}{2}\leq \Delta_e$. Let $M_{\odd}$ be an arbitrary perfect matching of the odd-degree vertices (in the underlying complete graph) and let $G'$ be the graph on the vertex set $V(G)$ with edge set $E(G')= E(G) \oplus M_\odd$. Then $G'$ is a simple Eulerian graph of maximum degree at most $\Delta_e$. Note that $|M_\odd|=\frac{v_{\odd}}{2}$.

By repeated application of Lemma \ref{lem:disjointcyclesmaxdegvertices}, we obtain a sequence $\mathcal{C}_1,\dots, \mathcal{C}_{\frac{\Delta_e}{2}}$ of sets of vertex-disjoint cycles which together form a parition of $E(G')$.

If $\Delta(G) \leq 2$ and $|M_\odd| = 2$, then $\Delta(G') \leq 3$. Since $G'$ is Eulerian, this means that $G'$ is a union of vertex-disjoint cycles, so we have $p_2(G) \leq 2 = \Delta_e$ by Corollary \ref{cor:cyclessymdifftwoedges}.

Otherwise, let $G''$ be the Eulerian graph with edge set $E(G'') = \bigcup_{i = 1}^t E(\mathcal C_i),$ where
\[t := \left\{\begin{array}{c l}
\left \lceil\frac{|M_\odd|}{2} \right \rceil & \text{if } |M_\odd| \neq 2;\\
2 & \text{if } |M_\odd| = 2.
\end{array}\right.\]
Note that $t \leq \frac{\Delta_e}{2}$. Indeed, if $|M_\odd| = 2$, then since we've already dealt with the case $\Delta(G) \leq 2$, we must have $t = 2 \leq \frac{\Delta_e}{2}$. If $|M_\odd| \neq 2$, then $|M_\odd| = \frac{v_\odd}{2} \leq \Delta_e$ implies $t = \left \lceil \frac{|M_\odd|}{2} \right \rceil \leq \frac{\Delta_e}{2}$ since $\Delta_e$ is even. Now we apply Lemma \ref{lem:Eulerianplusmatching} to odd-cover $E(G'') \oplus M_\odd$ with at most $2t$ paths, and we apply Lemma \ref{lem:disjointcyclestwopaths} to odd-cover each $\mathcal C_i$ with $2$ paths for $t+1 \leq i \leq \frac{\Delta_e}{2}$. Altogether, we odd-cover $E(G) = E(G'') \oplus \bigcup_{i = t+1}^{\frac{\Delta_e}{2}} E(\mathcal C_i)$ with at most $2t + 2\left(\frac{\Delta_e}{2} - t\right) = \Delta_e$ paths.

If $\frac{v_{\odd}}{2}> \Delta_e$, then we arbitrarily select a path in $G$ connecting odd-degree vertices and delete its edges. We repeat until $\frac{v_{\odd}}{2}\leq \Delta_e$. Note that this requires $\frac{v_{\odd}}{2}- \Delta_e$ steps. We then apply the above argument to odd-cover the remaining edges with $\Delta_e$ paths. This requires a total of $\Delta_e+(\frac{v_{\odd}}{2} - \Delta_e) = \frac{v_{\odd}}{2}$ paths.
\end{proof}

We now present a proof of Theorem \ref{thm:isolatedgeneral} as well, which improves the bound of Theorem \ref{thm:upperboundDeltaVSo2} if we allow for addition of isolated vertices. We restate the theorem here.

\IsolatedGeneral*

\begin{proof}[Proof of Theorem \ref{thm:isolatedgeneral}]
    Let $\Delta_e=2\left\lceil \frac{\Delta(G)}{2} \right\rceil$ and $v_\odd = v_\odd(G)$. 
    
    If $v_\odd = 4$ and $\Delta(G) \leq 2$, then $t = 1$, $\Delta_e = 2$, and $d = 0$. In fact, $G$ is a vertex-disjoint union of cycles and paths, with two path components, so by Lemma \ref{lem:disjointcyclesminustwoedgestwopaths}, $p_2(G) \leq 2 = 2t + \la(d)$.
    
Otherwise, suppose that $d \geq 0$, and so $2t \leq \Delta_e$. Consider a perfect matching $M_{\odd}$ of the odd-degree vertices (in the underlying complete graph) and the Eulerian graph $G'$ on $V(G)$ with edge set $E(G')= E(G) \oplus M_\odd$, as in the proof of Theorem \ref{thm:upperboundDeltaVSo2}.

By repeated application of Lemma \ref{lem:disjointcyclesmaxdegvertices}, we can decompose $E(G')$ into edge-disjoint Eulerian graphs $G''$ and $G'''$ with respective maximum degrees at most $2t$ and $\Delta_e - 2t=d$.
Now by Lemma \ref{lem:Eulerianplusmatching}, the edges from $E(G'') \oplus M_{\odd}$ can be odd-covered by $2t$ paths,
and by Theorem \ref{thm:p2vsla}, $E(G''')$ can be odd-covered by $\la(G''') \leq \la(d)$ paths, if we add a sufficient number of isolated vertices. Thus $p_{2,\iso}(G) \leq 2t + \la(d)$.

If $d < 0$, then we can't be in the case that $v_\odd = 4$ and $\Delta(G) \geq 3$, so we must have $d = \Delta_e - 2\left \lceil \frac{v_\odd}{4} \right \rceil < 0$. Now because $\Delta_e$ is even, this implies $\frac{v_{\odd}}{2} \geq \Delta_e$. Then by Theorem \ref{thm:upperboundDeltaVSo2}, $p_{2,\iso}(G) \leq p_2(G) \leq \frac{v_\odd}{2}$.
\end{proof}

\section{Cycle odd-covers}
\label{sec:cycleOdd}
We now adapt our methods to cycle odd-covers to prove Theorem~\ref{thm:cycleOddCover}, which gives upper bounds for the cycle odd-cover number $c_2(G)$ and its relaxations $c_{2,\topo}(G)$ and $c_{2,\iso}(G)$. Since $\Delta(G)/2$ is a lower bound for each of these parameters, the value of $c_{2,\topo}(G)$ is entirely determined, $c_{2,\iso}(G)$ is determined up to a $1 + o(1)$ factor, and $c_2(G)$ is determined up to a factor of $2$. This complements the work of Fan, whose main result from \cite{fan2003covers} implies that $c_2(G) \leq \lfloor (n-1)/2 \rfloor$ for any Eulerian graph $G$ on $n$ vertices.
\CycleOddCover*
\begin{proof}
    We first consider (1). We begin by applying the proof of Theorem \ref{thm:topological} to obtain, for some subdivision $G'$ of $G$, a well-distributed path $k$-system $\{\mathcal P_1, \ldots, \mathcal P_k\}$ with $k = \frac{\Delta(G)}{2}$, where each $\mathcal P_i$ consists of a single path $P_i$, and $E(G') = \bigoplus_{i = 1}^k P_i$. We choose any of these paths, say $P_1$, and we consider its endpoints $u$ and $v$ and the respective paths $P_i$ and $P_j$ that meet $P_1$ at $u$ and $v$. If $i = j$, then applying the operation $\join(u,v)$ converts $P_1$ and $P_i$ into cycles, which we can remove and obtain a path $(k-2)$-system. If $i \neq j$, then we can apply $\ins(u,\mathcal P_i, \mathcal P_j)$ and $\join(u,v)$ successively. This converts $P_1$ into a cycle and extends $P_j$ into a longer path that now meets $P_i$ at the new subdivided vertex. We can remove $P_1$ and obtain a path $(k-1)$-system. Iterating this process at most $k$ times, we obtain a cycle odd-cover of some subdivision of $G'$ with $k$ cycles.

    We now consider (2). We begin by adding isolated vertices to $G$ and obtaining a path odd-cover $E(G) = \bigoplus_{i = 1}^k P_i$ by Theorem \ref{thm:p2vsla}, where $k = \la(G)$. We denote the endpoints of $P_i$ by $u_i$ and $v_i$. We add one more isolated vertex $w$, and we define the cycle $C_i := P_i \cup \{u_iw, v_iw\}$ for each $i$. Since $G$ is Eulerian, every vertex of $G$ appears an even number of times as endpoints from $P_1, \ldots, P_k$, so 
    \[E(G) = \bigoplus_{i = 1}^k P_i = \bigoplus_{i = 1}^k (P_i \oplus \{u_iw, v_iw\}) = \bigoplus_{i = 1}^k C_i,\]
    giving the desired cycle odd-cover.

    Finally, we consider (3). By repeated application of Lemma \ref{lem:disjointcyclesmaxdegvertices}, we obtain a sequence $\mathcal{C}_1,\dots, \mathcal{C}_{\frac{\Delta(G)}{2}}$ of sets of vertex-disjoint cycles which together form a partition of $E(G)$. Each $\mathcal C_i$ can be odd-covered by two paths, say $P_i$ and $Q_i$. Since $\mathcal C_i$ is 2-regular, these paths must share endpoints, say $u_i$ and $v_i$. Adding the edge $u_iv_i$ to $P_i$ and to $Q_i$ gives cycles $C_i$ and $D_i$ with
    \[C_i \oplus D_i = P_i \oplus Q_i = E(\mathcal C_i).\]
    Altogether, $\{C_i, D_i\}_{i = 1}^\frac{\Delta(G)}{2}$ forms a cycle odd-cover of $G$ with $\Delta(G)$ cycles.
\end{proof}

\section{Future work and open problems}\label{sec:openquestions}

This problem can be generalized to odd-covers of graphs by any family $\mathcal{F}$ of graphs instead of paths and cycles. Clique odd-covers were studied in~\cite{buchanan2022subgraph}, while biclique odd-covers were studied in~\cite{buchanan2022odd}. The proof techniques in~\cite{buchanan2022odd,buchanan2022subgraph} include minimum rank arguments, vertex covers, linear algebra, and forbidden subgraphs. They are quite different from the techniques used in this paper, highlighting that different approaches can be successful for odd-cover problems depending on the family of covering graphs considered.

While the gap between the rather immediate lower bound of Equation~\ref{eq:vodd,maxdegree_lower} and the upper bound  of Theorem~\ref{thm:upperboundDeltaVSo2} is a constant factor, we have yet to find an example of a graph with path odd-cover number significantly greater than this lower bound. This leads to the following question.

\begin{prob}
    \label{prob:oneMoreLower}
    Does there exist a graph $G$ with
    \[ p_2(G)>\max\left\{\frac{v_\odd(G)}{2}, \left\lceil\frac{\Delta(G)+1}{2}\right\rceil \right\} \text{?} \]
\end{prob}

This problem is related to the linear arboricity conjecture of Akiyama, Exoo, and Harary \cite{Akiyama1981}, which states that the linear arboricity of a graph of maximum degree $d$ is at most $\lceil \frac{d+1}{2} \rceil$.

If a counterexample $G$ to the linear arboricity conjecture existed, then:
\[ p_2(G) \ge \la(G) > \left\lceil \frac{\Delta(G) + 1}{2} \right\rceil \]
If it is the case that $v_\odd(G) \le \Delta(G) + 1$, then this would imply a positive answer to Problem~\ref{prob:oneMoreLower}. 
If instead it is the case that $G$ has even maximum degree $\Delta$, then we can embed $G$ in a $\Delta$-regular (Eulerian) graph $G'$, so
\[ p_2(G') \ge \la(G') \ge \la(G) > \left\lceil \frac{\Delta + 1}{2} \right\rceil,\]
giving a positive answer to Problem~\ref{prob:oneMoreLower}.

Stated otherwise, a negative answer to Problem~\ref{prob:oneMoreLower} would imply the linear arboricity conjecture for all graphs $G$ with $v_{\odd}(G) \leq \Delta(G)+1$ and graphs with even maximum degree.

We pose a similar question for cycle odd-covers.
\begin{prob}
    Does there exist an Eulerian graph $G$ with
    \[c_2(G) > \frac{\Delta(G)}{2}+1 \text{?}\]
\end{prob}

Despite considerable effort, Gallai's famous conjecture that the edge set of every $n$-vertex graph can be decomposed into at most $\lceil n/2 \rceil$ edge-disjoint paths~\cite{lovasz1968covering} still remains open. For our weakening of the problem, new tools became available which allowed us to provide an affirmative answer in our setting for graphs with maximum degree at most $n/2$. We leave the general case as a conjecture of our own.

\begin{conj}
    \label{conj:Gallai}
    For all $n$-vertex graphs $G$, 
    \[ p_2(G) \le \left \lceil \frac{n}{2}\right \rceil. \]
\end{conj}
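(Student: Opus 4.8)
The plan is to split on the size of $\Delta(G)$ relative to $n/2$, dispatching the small-degree regime with Theorem~\ref{thm:upperboundDeltaVSo2} and attacking the dense regime by a global decomposition. Since every graph satisfies $v_{\odd}(G) \le n$, the bound $p_2(G) \le \max\{v_{\odd}(G)/2,\, 2\lceil \Delta(G)/2\rceil\}$ of Theorem~\ref{thm:upperboundDeltaVSo2} already yields $p_2(G) \le \lceil n/2\rceil$ whenever $2\lceil \Delta(G)/2\rceil \le \lceil n/2\rceil$, i.e.\ essentially whenever $\Delta(G) \le n/2$. Thus it suffices to treat graphs with $\Delta(G) > n/2$, and this is exactly the regime in which the factor of two in Theorem~\ref{thm:upperboundDeltaVSo2} becomes fatal.

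For the dense regime I would start from Lov\'asz's decomposition~\cite{lovasz1968covering} of $E(G)$ into $a$ edge-disjoint paths and $b$ edge-disjoint cycles with $a + b \le \lfloor n/2\rfloor$. The $a$ paths are already members of a path odd-cover at no extra cost, so the problem reduces to odd-covering the Eulerian subgraph $U$ formed by the union of the $b$ cycles using at most $b$ paths; that would give a total of at most $a + b \le \lfloor n/2 \rfloor$. Because each vertex lies on at most $b$ of the cycles, $U$ has maximum degree at most $2b$, so by repeated application of Lemma~\ref{lem:disjointcyclesmaxdegvertices} it decomposes into at most $b$ layers, each a set of vertex-disjoint cycles, and each such layer is odd-coverable by two paths via Lemma~\ref{lem:disjointcyclestwopaths}. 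This costs $2b$ paths, and the entire difficulty is to halve this count.

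The crux, then, is to \emph{merge} layers: given two sets of vertex-disjoint cycles that share vertices, odd-cover their combined edge set with roughly two paths rather than four, by threading the two global paths of each layer through common vertices and splicing them using the $\join$ and $\ins$ operations of Lemma~\ref{lem:path system} together with the two-edge integration of Section~\ref{sec:twoedges}. I expect this merging step to be the main obstacle: saving the factor of two is precisely what separates the lower bound~\eqref{eq:vodd,maxdegree_lower} from the upper bound of Theorem~\ref{thm:upperboundDeltaVSo2}, and for the densest graphs it appears genuinely entangled with the linear arboricity conjecture through Problem~\ref{prob:oneMoreLower}. An appealing alternative is to pass to $p_{2,\iso}$, where $\la(G)$-type bounds are available (cf.\ Theorem~\ref{thm:p2vsla}), but Proposition~\ref{prop:counteriso} shows that discarding the isolated vertices can cost an extra path, so this route cannot be used as a black box. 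A safer fallback in the extreme case $\Delta(G) = n-1$ is to reduce to near-regular Eulerian graphs and invoke explicit Hamiltonian decompositions \`a la Walecki~\cite{lucas1883recreations}, which already settle $K_n$; making such a reduction work for all dense graphs is, I believe, where the real work lies.
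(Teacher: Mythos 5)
First, a point of calibration: the paper does not prove this statement at all. It is posed as Conjecture~\ref{conj:Gallai}, an open problem; the authors only observe that Theorem~\ref{thm:upperboundDeltaVSo2} settles it for graphs of maximum degree at most roughly $n/2$. Your first paragraph correctly recovers exactly that partial result (since $v_\odd(G) \le n$, the bound $\max\left\{v_\odd(G)/2,\, 2\lceil \Delta(G)/2\rceil\right\}$ is at most $\lceil n/2 \rceil$ once $2\lceil \Delta(G)/2 \rceil \le \lceil n/2\rceil$), which matches what the paper claims for the small-degree regime. Everything beyond that in your proposal is a plan rather than a proof, as you candidly acknowledge, so the conjecture is not established by your argument either.

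Moreover, the key step of your plan provably fails as stated. You propose to decompose $E(G)$ \`a la Lov\'asz into $a$ paths and $b$ cycles with $a + b \le \lfloor n/2 \rfloor$, and then to odd-cover the union $U$ of the $b$ cycles with only $b$ paths by merging pairs of cycle layers into two paths each. Take $G = K_5$: here one may take $a = 0$ and $b = 2$ (two edge-disjoint Hamiltonian cycles), so your plan demands an odd-cover of all $10$ edges of $K_5$ by $2$ paths. But a path whose vertex set is contained in $V(K_5)$ has at most $4$ edges, so the symmetric difference of two such paths has at most $8 < 10$ edges; indeed $p_2(K_5) \ge e(K_5)/(v(K_5)-1) = 10/4 > 2$ by \eqref{eq:density_lower}, so $p_2(K_5) \ge 3$. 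Thus the pairwise merging lemma you hope for is false, and the slack between $\lfloor n/2 \rfloor$ and $\lceil n/2 \rceil$ exists only for odd $n$, so it cannot repair the plan for all graphs. This is not a fixable local defect: closing the dense regime is precisely the open difficulty the authors point to (it is entangled with linear arboricity via Problem~\ref{prob:oneMoreLower}), your $p_{2,\iso}$ fallback is blocked, as you yourself note, by Proposition~\ref{prop:counteriso}, and the Walecki-style fallback applies only to complete graphs. In short: the small-degree case in your proposal is right, the dense case is a genuine gap, and the statement remains open --- both for you and for the paper.
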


This conjecture is also supported by the result of Fan \cite{fan2003covers} that $c_2(G) \leq \lfloor (n-1)/2 \rfloor$ for every Eulerian $n$-vertex graph $G$. It would be interesting to consider whether a similar argument could be used for the path odd-cover setting.

If we have a graph $G$ of maximum degree $3$, then it has linear arboricity at most two~\cite{Akiyama1981}. If $G$ is also Eulerian, then by Theorem~\ref{thm:p2vsla} it can be path odd-covered with at most two paths. It has been shown that graphs of maximum degree $4$ have linear arboricity at most three~\cite{Akiyama1981}. Analogous to Theorem~\ref{thm:p2vsla}, can we show Eulerian graphs of this type can be path odd-covered with at most three paths?

\begin{prob}
    \label{prob:Eulerian4}
    For all Eulerian graphs $G$ of maximum degree $4$, is it the case that
    \[ p_2(G) \le 3 \text{?} \]
\end{prob}

A positive answer to this problem would improve the upper bound of Theorem \ref{thm:upperboundDeltaVSo2} from approximately $\Delta(G)$ to $\frac{3}{4}\Delta(G)$ for graphs with few odd-degree vertices. 
Similar bounds for higher maximum degrees would improve the upper bound even further.

\section*{\small Acknowledgments}
{\small 
This work was completed in part at the {\em 2022 Graduate Research Workshop in Combinatorics}, which was supported in part by NSF grant 1953985, and by a generous award from the Combinatorics Foundation. We would like to thank Stacie Baumann, Matthias Brugger, Connor Mattes, and Garrett Nelson for insightful discussions.

Borgwardt gratefully acknowledges support by the National Science Foundation, Algorithmic Foundations, Division of Computing and Communication Foundations, under grant 2006183 {\em Circuit Walks in Optimization}, and by the Airforce Office of Scientific Research under grant FA9550-21-1-0233 {\em The Hirsch Conjecture for Totally-Unimodular Polyhedra}.}

\end{document}